\documentclass[article, 11pt]{amsart}
\usepackage{amsmath, amssymb, fontenc, amsthm, array, tikz, hyperref, amsfonts, enumitem, mathtools, stmaryrd, centernot}
\usepackage[english]{babel}
\usepackage[toc]{appendix}
\usepackage{bigfoot}
\usepackage{doi}
\usepackage[margin=1in]{geometry}
\usetikzlibrary{arrows}

\tikzset{degil/.style={
            decoration={markings,
            mark= at position 0.5 with {
                  \node[transform shape] (tempnode) {$\subseteq$};
                  }
              },
              postaction={decorate}
}
}

\makeatletter
\newcommand{\preceqdot}{\mathrel{\mathpalette\pr@ceqd@t\relax}}
\newcommand{\pr@ceqd@t}[2]{%
  \begingroup
  \sbox\z@{$#1\prec$}\sbox\tw@{$#1\preceq$}%
  \dimen@=\dimexpr\ht\tw@-\ht\z@\relax
  {\preceq}%
  \mkern-5mu
  \raisebox{\dimen@}{$\m@th#1\cdot$}%
  \endgroup
}
\makeatother

\providecommand{\keywords}[1]{  \small  \textbf{\textit{Keywords:}} #1 \normalsize}

\hypersetup{
    colorlinks = true,
    citecolor = blue,
    urlcolor = blue,
    linkcolor = red
}

\theoremstyle{plain}
\newtheorem{theorem}{Theorem}[section]
\newtheorem{lemma}[theorem]{Lemma}
\newtheorem{corollary}[theorem]{Corollary}
\newtheorem{proposition}[theorem]{Proposition}
\newtheorem{remark}[theorem]{Remark}

\theoremstyle{definition}
\newtheorem{definition}[theorem]{Definition}

\subjclass{03C20, 03C98, 03C95.}

\begin{document}

\title{Embeddings, ultrapowers and direct powers}
\author{Pedro Teixeira Yago}
\address{Orcid: \href{}{0000-0001-7993-4516}, Classe di Lettere e Filosofia, Scuola Normale Superiore di Pisa, Italy}
\email{pedro.tyago@outlook.com}
\date{}

\begin{abstract}
We study when embeddings lift from structures to their ultrapowers, and when an ultrapower embeds into its direct power. We offer a refinement of Blass's theorem on the Rudin-Keisler order and introduce cardinal invariants $\tau_\mathcal{A}$ which imply the embeddability $\mathcal{A}^I / \mathcal{U} \hookrightarrow \mathcal{A}^I$.
\end{abstract}

\maketitle

\keywords{\textbf{Keywords:} embedding; ultrapower; direct power; Rudin-Keisler order}

\section{Introduction}\label{introduction}

A fundamental question about ultrapowers concerns the extent to which structural relationships between given structures, such as embeddings, are lifted to their ultrapowers. This paper addresses two interconnected problems: first, when an embedding between structures lifts to an embedding between their ultrapowers; and second, when an ultrapower admits an embedding into its corresponding direct power.

The first problem is intimately connected with the Rudin-Keisler ordering of ultrafilters \cite{rudin1971} \cite{shelah} \cite{blass1973} \cite{keisler1967}. Recall that for ultrafilters $\mathcal{U}_I$ over $I$ and $\mathcal{U}_J$ over $J$, we write $\mathcal{U}_I \leq_\mathrm{RK} \mathcal{U}_J$ if there exists a map $h : J \to I$ such that $X \in \mathcal{U}_I \Leftrightarrow h^{-1}(X) \in \mathcal{U}_J$. A well-known theorem of Andreas Blass \cite{blass} states that $\mathcal{U}_I \leq_\mathrm{RK} \mathcal{U}_J$ iff for every structure $\mathcal{A}$, the $\mathcal{A}^I / \mathcal{U}_I \preceq \mathcal{A}^J / \mathcal{U}_J$. This immediately gives a sufficient condition for the lifting of embeddings: if $\mathcal{A} \hookrightarrow \mathcal{B}$ and $\mathcal{U}_I \leq_\mathrm{RK} \mathcal{U}_J$, then $\mathcal{A}^I / \mathcal{U}_I \leq_\mathrm{RK} \mathcal{B}^J / \mathcal{U}_J$.

However, the Rudin-Keisler criterion is not necessary in general. The question of whether there is a generalization to the criterion is thus raised. A partial answer is offered by a refinement of Andreas Blass's result \cite{blass}:

\setcounter{section}{2}
\setcounter{theorem}{9}

\begin{theorem}
Let $u: \mathcal{A} \hookrightarrow \mathcal{B}$, $h: J \to I$ be a mapping, and $\mathcal{U}_I$ and $\mathcal{U}_J$ be ultrafilters respectively over $I$ and $J$. Let also $H: X \mapsto h^{-1}[X]$ and define $w: A^I \to B^J$ such that, for each $f \in A^I$, $w(f) \in B^J$ is the function such that $\{j \in J \mid w(f)(j) = u(a)\} = H(\{j \in I \mid f(j) = a\})$.\footnote{As we later note, $w(f)$ is indeed well-defined, that is, its description is unique.} Define $e: \mathcal{A}^I / \mathcal{U}_I \to \mathcal{B}^J / \mathcal{U}_J ; [f]_{\mathcal{U}_I} \mapsto [w(f)]_{\mathcal{U}_J}$. Then $e$ is an embedding iff $X \in \mathcal{U}_I \Leftrightarrow h^{-1}(X) \in \mathcal{U}_J$.
\end{theorem}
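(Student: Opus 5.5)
The plan is to first identify $w$ concretely and then reduce both directions to a {\L}o\'s-style computation. Since the sets $\{i \in I \mid f(i) = a\}$ for $a \in A$ partition $I$, their preimages under $h$ partition $J$, and the defining condition says exactly that $w(f)(j) = u(a)$ whenever $f(h(j)) = a$. Hence
\[
w(f) = u \circ f \circ h .
\]
This also settles the well-definedness of the description of $w(f)$ mentioned in the footnote. I will read ``$e$ is an embedding'' as including that $e$ is well defined on $\mathcal{U}_I$-classes.

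For the direction ($\Leftarrow$), assume $X \in \mathcal{U}_I \Leftrightarrow h^{-1}(X) \in \mathcal{U}_J$. Take any atomic or negated atomic formula $\varphi(\bar x)$, including equations $x = y$ and their negations, and a tuple $\bar f$ from $A^I$. By {\L}o\'s, $\mathcal{A}^I/\mathcal{U}_I \models \varphi([\bar f])$ iff $S := \{i \mid \mathcal{A} \models \varphi(\bar f(i))\} \in \mathcal{U}_I$. By hypothesis this holds iff $h^{-1}(S) = \{j \mid \mathcal{A} \models \varphi(\bar f(h(j)))\} \in \mathcal{U}_J$. Since $u$ is an embedding, it preserves and reflects atomic formulas, so this set equals $\{j \mid \mathcal{B} \models \varphi(u \circ \bar f \circ h\,(j))\}$. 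By {\L}o\'s again, membership of that set in $\mathcal{U}_J$ is equivalent to $\mathcal{B}^J/\mathcal{U}_J \models \varphi([\bar w(f)])$.

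Applying this to the formula $x = y$ shows that $[f]=[g]$ iff $[w(f)] = [w(g)]$. This gives both that $e$ is well defined and that it is injective. Applying it to relation and function symbols, whose graphs are atomic, shows that $e$ preserves and reflects the structure. So $e$ is an embedding.

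For the direction ($\Rightarrow$), assume $e$ is well defined and injective. Because $\mathcal{U}_I$ and $\mathcal{U}_J$ are ultrafilters and $h^{-1}$ commutes with complements, it suffices to prove two implications: $X \in \mathcal{U}_I \Rightarrow h^{-1}(X) \in \mathcal{U}_J$, and $X \notin \mathcal{U}_I \Rightarrow h^{-1}(X) \notin \mathcal{U}_J$.

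Fix distinct $a, b \in A$. Let $g$ be the constant function with value $a$, and let $f$ take the value $a$ on $X$ and $b$ off $X$. Then $\{j \mid w(f)(j) = w(g)(j)\} = h^{-1}(X)$, using injectivity of $u$ to see that $u(b) \neq u(a)$.
\begin{itemize}
\item If $X \in \mathcal{U}_I$, then $[f] = [g]$, so well-definedness forces $[w(f)] = [w(g)]$, that is, $h^{-1}(X) \in \mathcal{U}_J$.
\item If $X \notin \mathcal{U}_I$, then $[f] \neq [g]$, so injectivity forces $h^{-1}(X) \notin \mathcal{U}_J$.
\end{itemize}

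The only real obstacle is the degenerate case $|A| = 1$. There $e$ is trivially an embedding for every $h$, so the forward implication fails as literally stated. I would therefore state the assumption $|A| \geq 2$ explicitly, which is presumably intended, and note that it is used only in the direction ($\Rightarrow$).
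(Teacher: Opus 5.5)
Your proof is correct. The direction ($\Rightarrow$) is the paper's argument. It also picks, for each $X\subseteq I$, two functions with equalizer $X$, which is where $|A|>1$ is needed; the paper imposes this as a standing assumption just before these results, so your caveat agrees with it. It then uses that $H$ sends $\{f=g\}$ to $\{w(f)=w(g)\}$ (Lemma~\ref{lemma ultrapowers embedding}), together with well-definedness and injectivity of $e$.

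The direction ($\Leftarrow$) is where you differ. The paper never writes $w(f)=u\circ f\circ h$. It works with the abstract set map $H$ and Lemma~\ref{lemma ultrapowers embedding}, which only controls equalizer and non-equalizer sets, so it has to treat each ingredient separately:
\begin{itemize}
\item operations and constants, by a direct check that $w$ commutes with $\mathrm{F}$;
\item relations, by a patching trick: redefine $f_i$ off the $\mathcal{U}_I$-large set $Z$ so that the relation holds everywhere, then transport the sets $\{g_i=f_i\}$ through $H$ and intersect;
\item injectivity, via the $\neq$-clause of the lemma.
\end{itemize}
Well-definedness of $e$ is left implicit there.

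Your identity $h^{-1}\{i\mid\mathcal{A}\models\varphi(\bar f(i))\}=\{j\mid\mathcal{A}\models\varphi(\bar f(h(j)))\}$ holds for any coordinatewise condition. Combined with $u$ preserving and reflecting unnested atomic formulas, it does all of the above in one \L o\'s-style step. It also gives well-definedness and injectivity together from the case $x=y$. Finally, it adapts directly to graphs and domains of the partial operations the paper permits. The paper's formulation isolates which properties of $H$ are used, but since $H=h^{-1}$ in the theorem, your more concrete route loses nothing.

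Your remark about complements, which you do not end up using, actually shows a bit more. Well-definedness of $e$ alone gives $X\in\mathcal{U}_I\Rightarrow h^{-1}(X)\in\mathcal{U}_J$, and applying this to $I\setminus X$ yields the converse. So for $|A|\geq 2$ the map $e$ is an embedding as soon as it is well defined.
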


The second problem -- when $\mathcal{A}^I / \mathcal{U} \hookrightarrow \mathcal{A}^I$ -- is subtler and depends delicately on the structure $\mathcal{A}$ and the ultrafilter $\mathcal{U}$. While for principal ultrafilters the embedding is trivial, for non-principal ultrafilters, the answer varies according to the algebraic properties of $\mathcal{A}$. To address this, we introduce some notions that capture the connectivity of elements of $\mathcal{A}$ under the operations and relations of the signature. We let the connected set of $a$ be composed by all the elements from which $a$ is obtained by means of an operation, or for which some operation is defined with $a$, or for which some relation holds with $a$. Let then $\llbracket a \rrbracket$ be the closure under connected sets of $a$. For a function $\mathrm{F} \in \sigma$, and $a \in A$, we define $S^{a \downarrow}_\mathrm{F}$ as the set of all tuples from which $a$ may be obtained by means of the function $\mathrm{F}^\mathcal{A}$; analogously, $S^{a \uparrow}_\mathrm{F}$ as the set of all the tuples containing $a$ for which $\mathrm{F}^\mathcal{A}$ is defined; and similarly, $S^a_\mathrm{R}$ as the set of all the tuples containing $a$ for which $\mathrm{R}^\mathcal{A}$ obtains. We then let $\tau_a$ be the sum of $|S^{a \downarrow}_\mathrm{F}|$, $|S^{a \uparrow}_\mathrm{F}|$ and $|S^a_\mathrm{R}|$ for all $\mathrm{F}, \mathrm{R} \in \sigma$, and $\tau_{\llbracket a \rrbracket}$ be the sum of all the $\tau_b$ for $b \in \llbracket a \rrbracket$. Finally, let $\tau_\mathcal{A}$ be the supremum of al the $\tau_{\llbracket a \rrbracket}$ for all $a \in A$.

In that regard, our main results are the following.

\setcounter{section}{3}
\setcounter{theorem}{5}

\begin{theorem}
If $\mathcal{U}$ has the $\tau_{\mathcal{A}^I / \mathcal{U}}^+$-intersection property, then $e : \mathcal{A}^I / \mathcal{U} \hookrightarrow \mathcal{A}^I$.
\end{theorem}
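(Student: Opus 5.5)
The plan is to build $e$ one connected component at a time. Inside a component, the $\tau^+$-intersection property lets all the relevant atomic facts hold simultaneously on a single $\mathcal{U}$-large set. Across components, the absence of any connection in the ultrapower is what will give injectivity and the failure of cross-component relations. I read ``$\mathcal{U}$ has the $\kappa$-intersection property'' as: any family of fewer than $\kappa$ members of $\mathcal{U}$ has its intersection in $\mathcal{U}$. So here any $\le\tau$ members of $\mathcal{U}$, where $\tau=\tau_{\mathcal{A}^I/\mathcal{U}}$, intersect in a set of $\mathcal{U}$.

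\textbf{Step 1: a large set for each component.} Fix a representative $f_x\in A^I$ for every $x\in A^I/\mathcal{U}$. Partition $A^I/\mathcal{U}$ into the classes $C=\llbracket x\rrbracket$. By definition, $\tau_{\llbracket x\rrbracket}\le\tau$ bounds the number of positive atomic facts $\mathrm{F}(\bar y)=z$ and $\mathrm{R}(\bar y)$ involving elements of $C$. Since $|C|\le\tau$ (up to a trivial adjustment when $\tau$ is finite), the number of pairs $y\neq z$ and of tuples $\bar y$ from $C$ with $\neg\mathrm{R}(\bar y)$ or $\mathrm{F}(\bar y)\neq z$ is also at most $\tau$. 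By \L o\'s, each such fact or non-fact about $C$ holds of the chosen representatives on some set in $\mathcal{U}$. I then intersect these $\le\tau$ sets to get $W_C\in\mathcal{U}$: on every coordinate $j\in W_C$, the representatives of $C$ realize exactly the atomic diagram of $C$. In particular $W_C\neq\emptyset$, and I fix some $i_C\in W_C$.

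\textbf{Step 2: the map.} For $x\in C$ define $e(x)\in A^I$ by $e(x)(j)=f_x(j)$ if $j\in W_C$ and $e(x)(j)=f_x(i_C)$ otherwise. On each coordinate $j$, the tuple $(e(x)(j))_{x\in C}$ is a coordinate $k\in W_C$ (either $j$ or $i_C$) of the representatives. Hence it satisfies the atomic diagram of $C$. Because operations and relations of $\mathcal{A}^I$ act coordinatewise, $e\restriction C$ is injective, preserves every $\mathrm{F}$ (with closure of $C$ under $\mathrm{F}$ being part of the definition of the connected set), and preserves $\mathrm{R}$ and $\neg\mathrm{R}$ for tuples inside $C$.

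\textbf{Step 3: across components (main obstacle).} It remains to handle pairs and tuples meeting two or more components. If $e(x)=e(y)$ with $x\in C$, $y\in D$, $C\neq D$, then $f_x$ and $f_y$ agree on $W_C\cap W_D\in\mathcal{U}$, so $x=y$. Likewise, suppose $\mathrm{R}^{\mathcal{A}^I}(e(\bar x))$ held for a tuple meeting several components. Then $\mathrm{R}$ would hold of the representatives on the intersection of finitely many $W$'s, which lies in $\mathcal{U}$. By \L o\'s, $\mathrm{R}(\bar x)$ would hold in the ultrapower, putting those elements in a single connected set, which is a contradiction. The same argument applies to a mixed operation value $\mathrm{F}(\bar x)$, which must lie in the component of its arguments.

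The delicate point I expect is bookkeeping rather than ideas. One must check that the count of negative facts and inequalities per component is really bounded by $\tau_{\llbracket x\rrbracket}$, and if the intended intersection property only gives nonempty intersections, replace $W_C$ by a set obtained from a $\tau^+$-sized family closed under finite intersections. Steps 2 and 3 then yield that $e$ is an embedding $\mathcal{A}^I/\mathcal{U}\hookrightarrow\mathcal{A}^I$.
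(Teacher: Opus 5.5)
Your overall construction (one $\mathcal{U}$-large set per connected component $C$, and a single patch coordinate $i_C$ shared by all elements of $C$) is the paper's. But Step 1 as written fails. The invariant $\tau_{\llbracket x\rrbracket}$ counts only tuples at which an operation is defined or a relation holds; it gives no bound on negative facts, and these can exceed $\tau$ once $\sigma$ is large. Take $\sigma=\{\mathrm{R}_n\}_{n<\omega}$ unary, $\mathcal{A}=(\omega,\mathrm{R}_n)$ with $\mathrm{R}_n^{\mathcal{A}}=\{n\}$, $I=\omega$, and $\mathcal{U}$ non-principal. Every $\llbracket x\rrbracket$ is a singleton and $\tau_{\mathcal{A}^I/\mathcal{U}}=1$, so the hypothesis holds trivially. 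For a non-standard $x$, however, the sets $\{j\mid f_x(j)\neq n\}$, $n<\omega$, witnessing $\neg\mathrm{R}_n(x)$ are $\aleph_0>\tau$ members of $\mathcal{U}$ with empty intersection. So your $W_C$ is empty and $i_C$ does not exist.

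The repair is already contained in your Step 3. Put only the positive atomic facts into $W_C$: defined operation values (constants included) and true relation instances. There are at most $\tau_{\llbracket x\rrbracket}\le\tau$ of them, and finitely many when $\tau$ is finite, so $W_C\in\mathcal{U}$. Hence $[e(x)]=x$, and your Step 2 shows $e$ preserves positive facts. Everything else is then automatic, inside a component as well as across components:
\begin{itemize}
\item $e(x)=e(y)$ gives $x=[e(x)]=[e(y)]=y$;
\item if $\mathrm{R}^{\mathcal{A}^I}(e(\bar x))$ holds, or $\mathrm{F}^{\mathcal{A}^I}(e(\bar x))$ is defined, this happens at every coordinate, hence $\mathcal{U}$-almost everywhere for the representatives, hence in $\mathcal{A}^I/\mathcal{U}$.
\end{itemize}
In other words, a homomorphic section of the quotient map $\mathcal{A}^I\to\mathcal{A}^I/\mathcal{U}$ is automatically an embedding.

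Your reading of the intersection property is also legitimate, so your closing hedge is unnecessary. For an ultrafilter and infinite $\kappa$, the paper's version (fewer than $\kappa$ members have nonempty intersection) already forces $\kappa$-completeness. Indeed, if $\bigcap Y\notin\mathcal{U}$, then $Y\cup\{I\setminus\bigcap Y\}$ still has fewer than $\kappa$ members and has empty intersection. This matters, since your Step 3 and $[e(x)]=x$ need $W_C\in\mathcal{U}$, not mere nonemptiness.

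Once repaired, your argument is more complete than the paper's. The paper intersects only positive facts, as you should. But it treats the resulting set as merely nonempty, and it leaves injectivity and the reflection of relations unargued.
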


In Proposition \ref{proposition tau value}, we offer some upper bounds of the value of $\tau_{\mathcal{A}^I}$ and $\tau_{\mathcal{A}^I / \mathcal{U}}$.

For a $\sigma$-structure $\mathcal{A}$ and $a \in A$, let the operational set of $a$ be the set of all the elements which may be obtained from $a$ by means of some operation. Define $\mathrm{Cl}_\mathcal{A}(a)$ as the closure under operational sets of $a$. Let also a have a unique representation if whenever $\mathrm{F}(b_0, ..., b_n)$ and $\mathrm{G}(c_0, ..., c_m)$, then $\mathrm{F} = \mathrm{G}$ and $b_i = c_i$. Then:

\setcounter{section}{3}
\setcounter{theorem}{10}

\begin{theorem}
Let $\sigma$ have no relations and $\mathcal{A}$ be a $\sigma$-structure whose operations are total and such that each of its elements has a unique representation. If there is $\{a_\alpha\}_{\alpha < \kappa} \subseteq \mathcal{A}^I / \mathcal{U}$ such that $\{\mathrm{Cl}_\mathcal{B}(a_\alpha) \rangle_{\alpha < \kappa}$ partitions $\mathcal{A}^I / \mathcal{U}$ and $\{\mathrm{c}^{\mathcal{A}^I / \mathcal{U}} \mid \mathrm{c}\ \text{is a constant of}\ \sigma\} \subseteq \{a_\alpha\}_{\alpha < \kappa}$, then there is a choice function $e$ on the equivalence classes of $A^I / \mathcal{U}$ such that $e : \mathcal{A}^I / \mathcal{U} \hookrightarrow \mathcal{A}^I$.
\end{theorem}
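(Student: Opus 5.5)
We interpret the statement as follows. Write $\mathcal{B} = \mathcal{A}^I/\mathcal{U}$. The hypotheses give a family $\{a_\alpha\}_{\alpha<\kappa}$ of "generators" such that the closures $\mathrm{Cl}_\mathcal{B}(a_\alpha)$ partition $B$. The plan is to define $e$ by choosing representatives only for the generators $a_\alpha$, and then to extend the choice to all of $B$ by following the operations. Unique representation will make this extension well-defined.

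\textbf{Step 1: structure of $\mathcal{B}$.} Unique representation is a first-order property: for each pair $\mathrm{F},\mathrm{G}$ it is a universal sentence saying that $\mathrm{F}(\bar x)=\mathrm{G}(\bar y)$ implies $\mathrm{F}=\mathrm{G}$ and $\bar x=\bar y$. Together with totality of the operations, this transfers to $\mathcal{B}$ by Łoś. We also expect, and must check, that no generator $a_\alpha$ is a value of an operation outside its own closure. Hence every $b\in \mathrm{Cl}_\mathcal{B}(a_\alpha)$ is either $a_\alpha$ itself or has a unique representation $b=\mathrm{F}^\mathcal{B}(b_0,\dots,b_n)$. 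Each $b_i$ lies in some closure $\mathrm{Cl}_\mathcal{B}(a_\beta)$. Define a rank $\rho(b)$: set $\rho=0$ on the generators $a_\alpha$ (this includes the constants, by hypothesis), and $\rho(b)=1+\max_i\rho(b_i)$ otherwise. The main obstacle is showing that $\rho$ is well-founded, i.e.\ that every element is reached from generators in finitely many steps, with no infinite descending chain of representations. We will try to deduce this from the partition hypothesis: $\mathrm{Cl}_\mathcal{B}(a_\alpha)$ is the union of finite iterates of operational sets starting from $a_\alpha$, so each of its elements is a finite term over generators. By unique representation, that term is the only one.

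\textbf{Step 2: defining $e$.} By recursion on $\rho$, choose $e(b)\in b$, i.e.\ a function $f\in A^I$ with $[f]_\mathcal{U}=b$. For $\rho(b)=0$ with $b=a_\alpha$ a non-constant generator, pick any representative; for $b=\mathrm{c}^\mathcal{B}$, pick the constant function $\mathrm{c}^{\mathcal{A}^I}$. For $b=\mathrm{F}^\mathcal{B}(b_0,\dots,b_n)$, set $e(b)=\mathrm{F}^{\mathcal{A}^I}(e(b_0),\dots,e(b_n))$. This is a representative of $b$ by Łoś, and it is well-defined because the representation of $b$ is unique. Then $e$ is a choice function on the classes, so it is injective automatically, since distinct classes are disjoint.

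\textbf{Step 3: $e$ is a homomorphism.} Since $\sigma$ has no relations, it remains only to check $e(\mathrm{F}^\mathcal{B}(\bar b))=\mathrm{F}^{\mathcal{A}^I}(e(\bar b))$ and $e(\mathrm{c}^\mathcal{B})=\mathrm{c}^{\mathcal{A}^I}$. Both hold by construction. Injectivity plus preservation of operations and constants give an embedding. The one point to verify carefully is that the element $\mathrm{F}^\mathcal{B}(\bar b)$ is never itself one of the generators $a_\alpha$, for otherwise its value would have been fixed arbitrarily at rank $0$. This follows from the partition: if $a_\alpha=\mathrm{F}^\mathcal{B}(\bar b)$ with $\bar b$ not all inside $\mathrm{Cl}_\mathcal{B}(a_\alpha)$, then $a_\alpha$ would lie in the closure of another generator, contradicting disjointness. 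The case where $\bar b$ does lie inside $\mathrm{Cl}_\mathcal{B}(a_\alpha)$ would create a cycle, and this is exactly what the well-foundedness argument of Step 1 must exclude. If it cannot be excluded outright, we would redefine the generator's representative through its unique representation whenever such a representation exists.
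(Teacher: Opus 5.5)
Your construction is the paper's own: fix representatives of the generators (constant functions for the constants), propagate them through the operations, and use unique representation for well-definedness. The paper simply asserts that the resulting $e$ is a function preserving the operations. The point you flag in Step~3 is a genuine gap, and neither of your proposed resolutions closes it.

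A generator can be the value of an operation applied to elements of its own closure while all hypotheses hold. Step~1 cannot exclude this. Unique representation only says an element has at most one representation $\mathrm{F}(b_0,\dots,b_n)$, not that a generator has none, so ``each element is a unique finite term over the generators'' is false. (Your rank $\rho$ is perfectly well founded in the example below; what fails is the homomorphism condition at generators.)

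Concretely, let $\sigma=\{\mathrm{s}\}$ with $\mathrm{s}$ unary. Let $\mathcal{A}$ be a disjoint union of infinitely many $3$-cycles and infinitely many $5$-cycles, with $\mathrm{s}^{\mathcal{A}}$ moving each point one step along its cycle.
\begin{itemize}
\item $\mathrm{s}^{\mathcal{A}}$ is a total bijection, so every element has a unique representation.
\item By \L o\'{s}, every element of $\mathcal{B}=\mathcal{A}^\omega/\mathcal{U}$ lies on a $3$-cycle or a $5$-cycle, and choosing one point $a_\alpha$ per cycle gives closures that partition $\mathcal{B}$.
\item For $a_\alpha$ on a $3$-cycle, your recursion sets $e(a_\alpha)=f$ for an arbitrary $f\in a_\alpha$ and $e\big((\mathrm{s}^{\mathcal{B}})^2(a_\alpha)\big)=(\mathrm{s}^{\mathcal{A}})^2\circ f$.
\item Since $\mathrm{s}^{\mathcal{B}}\big((\mathrm{s}^{\mathcal{B}})^2(a_\alpha)\big)=a_\alpha$, preservation of $\mathrm{s}$ forces $f=(\mathrm{s}^{\mathcal{A}})^3\circ f$ at every coordinate.
\item This fails as soon as $f$ takes values in $5$-cycles on a nonempty $\mathcal{U}$-null set. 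In the paper's stagewise construction, $a_\alpha$ then even receives two different values, $f$ and $(\mathrm{s}^{\mathcal{A}})^3\circ f$.
\end{itemize}
Your fallback of defining $e(a_\alpha)$ through its representation $a_\alpha=\mathrm{s}^{\mathcal{B}}\big((\mathrm{s}^{\mathcal{B}})^2(a_\alpha)\big)$ is circular, because that representation leads back around the cycle to $a_\alpha$.

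The missing idea is to choose the representative of such a generator so that its cycle identity holds at \emph{every} coordinate, not merely $\mathcal{U}$-almost everywhere.
\begin{itemize}
\item If $a_\alpha=t^{\mathcal{B}}(a_\alpha)$ for a term $t$ and $f\in a_\alpha$, then $X=\{j\in I\mid t^{\mathcal{A}}(f(j))=f(j)\}\in\mathcal{U}$ by \L o\'{s}.
\item Fix $k\in X$ and redefine $f$ to be $f(k)$ off $X$; this is the device from the proof of Theorem~\ref{theorem representatives embedding}. The result is a representative with $t^{\mathcal{A}^I}(f)=f$ identically.
\item Constants need no repair, and admit none, since an embedding must send $\mathrm{c}^{\mathcal{B}}$ to $\overline{\mathrm{c}^{\mathcal{A}}}$. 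This is harmless, because $\overline{\mathrm{c}^{\mathcal{A}}}$ satisfies every such identity that $\mathrm{c}^{\mathcal{B}}$ satisfies.
\item You must then use unique representation in $\mathcal{B}$ to show that any coincidence $w^{\mathcal{B}}(a_\alpha)={w'}^{\mathcal{B}}(a_\alpha)$ reduces, after stripping common outer symbols, to $v^{\mathcal{B}}(a_\alpha)=a_\alpha$ with $v$ a power of the minimal such $t$. This makes the propagated values agree.
\end{itemize}
For unary signatures this closes the gap. With operations of higher arity, a generator's identity may involve other elements whose representatives are chosen at the same time, and you would still have to show that all these identities can be met simultaneously.
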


\setcounter{section}{1}
\setcounter{theorem}{0}

Before we begin, we start with a few remarks on conventions we shall use. For $n \in \omega$, a set $A$ and $a \in A$, we write $\langle x_1, ..., a, ..., x_{n-1} \rangle$ for an $n$-tuple of $A^n$ in which $a$ occurs in some position. We shall also follow the convention of denoting structures by a calligraphic font, and their corresponding domains, by an italicized font -- for example, $\mathcal{A}$ and $A$, respectively. Let $\mathcal{A}$ be a structure and $I$ be a set. We denote by $\mathcal{A}^I$ the direct power of $A$ with index set $I$, and by $A^I$ its domain, that is, the set of all functions from $I$ into $A$. For a given element $a \in A$, we denote by $\overline{a}$ the constant function $a$ of $A^I$, when the index set $I$ is clear by context. If $\mathcal{U}$ is an ultrafilter over $I$, we denote by $\mathcal{A}^I / \mathcal{U}$ the resulting ultrapower. By the \emph{natural embedding} of a set or structure into its direct power or ultrapower, we mean the mapping which takes each element of the domain of the set and maps it to either the constant function which assigns to every element of the index set the element of the domain in question, in the case of direct powers, or its corresponding equivalence class modulo the ultrafilter, in the case of ultrapowers. We let $\kappa$, $\tau$, $\xi$, $\zeta$, and so on, stand for cardinals. For a cardinal $\kappa$, we denote by $\kappa^+$ its successor.

We take the basic first-order language to be composed of individual variables, negation, conjunction, equality, and the universal quantifier (where the other connectives and quantifiers may be defined as usual). For a signature $\sigma$ and an operation $\mathrm{F}$ or relation $\mathrm{R}$, we write $\mathrm{ar}(\mathrm{F})$ and $\mathrm{ar}(\mathrm{R})$ for their arity. As usual, we use a calligraphic letter $\mathcal{A}$ to denote a structure, and the respective italicized letter $A$ to denote its corresponding domain. Given a constant $\mathrm{c}$, function symbol $\mathrm{F}$ or relation symbol $\mathrm{R}$ of $\sigma$, and a $\sigma$-structure $\mathcal{A}$, we denote by $\mathrm{c}^\mathcal{A}$, $\mathrm{F}^\mathcal{A}$ and $\mathrm{R}^\mathcal{A}$ their interpretations in $\mathcal{A}$, respectively. Given an ultrafilter $\mathcal{U}$ quotienting a structure $\mathcal{A}^I$ and $a \in A^I$, we denote by $[a]_\mathcal{U}$ the equivalence class of $a$. Furthermore, throughout the paper, we shall not make the assumption the operations of a given structure are total, so that some may be, in principle, partial.

\section{Embedding between ultrapowers}

Suppose we have $\mathcal{A} \hookrightarrow \mathcal{B}$. When is it the case that $\mathcal{A}^I / \mathcal{U}_I \hookrightarrow \mathcal{B}^J / \mathcal{U}_J$? It is well known that $\mathcal{A}^I / \mathcal{U}$ is isomorphic to $\mathcal{A}$ when either $\mathcal{A}$ is finite or $\mathcal{U}$ is principal. Therefore, it is trivial that if $\mathcal{A} \hookrightarrow \mathcal{B}$ and either $\mathcal{A}$ and $\mathcal{B}$ are finite or $\mathcal{U}_I$ and $\mathcal{U}_J$ are principal, then $\mathcal{A}^I / \mathcal{U}_I \hookrightarrow \mathcal{B}^J / \mathcal{U}_J$. It is less clear, however, under what conditions embeddability may be lifted to the ultrapowers given both structures are infinite and the ultrapower of the embedding structure is non-isomorphic to its generating structure. The matter is naturally related to the Rudin-Keisler ordering of ultrafilters.

\begin{definition}[Rudin-Keisler ordering \cite{rudin1971}]
Let $\mathcal{U}_I$ and $\mathcal{U}_J$ be ultrafilters over $I$ and $J$. The Rudin-Keisler ordering of ultrafilters $\leq_\mathrm{RK}$ is defined by $\mathcal{U}_I \leq_\mathrm{RK} \mathcal{U}_J$ when there is $h : J \to I$ such that $X \in \mathcal{U}_I$ iff $h^{-1}[X] \in \mathcal{U}_J$. Two ultrafilters are Rudin-Keisler equivalent, written $\mathcal{U}_I \approx_\mathrm{RK} \mathcal{U}_J$, when $\mathcal{U}_I \leq_\mathrm{RK} \mathcal{U}_J$ and $\mathcal{U}_J \leq_\mathrm{RK} \mathcal{U}_I$.\footnote{In fact, that is only a preorder. The Rudin-Keisler (partial) ordering is defined on types of ultrafilters, which are defined as the equivalence classes of ultrafilters under the defined equivalence. For brevity, we use the concept as defined.}
\end{definition}

The relevance of the Rudin-Keisler ordering of ultrafilters is particularly relevant given the following result by Blass:

\begin{proposition}[\cite{blass}]\label{proposition rudin keisler}
Let $\mathcal{U}_I$ and $\mathcal{U}_J$ be ultrafilters over $I$ and $J$. Then the following are equivalent:

\begin{itemize}
\item[\emph{(1)}] $\mathcal{U}_I \leq_{\mathrm{RK}} \mathcal{U}_J$;
\item[\emph{(2)}] for any $\sigma$ and $\sigma$-structure $\mathcal{A}$, $\mathcal{A}^I / \mathcal{U}_I \hookrightarrow \mathcal{A}^J / \mathcal{U}_J$;
\item[\emph{(3)}] for any $\sigma$ and $\sigma$-structure $\mathcal{A}$, $\mathcal{A}^I / \mathcal{U}_I \preceq \mathcal{A}^J / \mathcal{U}_J$.
\end{itemize}
\end{proposition}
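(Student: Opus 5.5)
We prove the cycle (1) $\Rightarrow$ (3) $\Rightarrow$ (2) $\Rightarrow$ (1). The implication (3) $\Rightarrow$ (2) is immediate, since an elementary embedding is in particular an embedding.

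\emph{(1) $\Rightarrow$ (3).} Fix $h : J \to I$ witnessing $\mathcal{U}_I \leq_\mathrm{RK} \mathcal{U}_J$ and define
\[
e([f]_{\mathcal{U}_I}) = [f \circ h]_{\mathcal{U}_J}.
\]
For a formula $\varphi$ and $f_1, \dots, f_n \in A^I$, we have
\[
\{ j \in J \mid \mathcal{A} \models \varphi(f_1(h(j)), \dots, f_n(h(j))) \} = h^{-1}[X], \quad \text{where } X = \{ i \in I \mid \mathcal{A} \models \varphi(f_1(i), \dots, f_n(i)) \}.
\]
By the Rudin-Keisler condition, $h^{-1}[X] \in \mathcal{U}_J$ iff $X \in \mathcal{U}_I$. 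Applying Łoś's theorem to both ultrapowers then shows two things:
\begin{itemize}
\item $e$ is well defined (take $\varphi$ to be equality);
\item $e$ preserves every formula, so it is an elementary embedding.
\end{itemize}
Partial operations cause no difficulty, because definedness is also decided coordinatewise.

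\emph{(2) $\Rightarrow$ (1).} This is the main step. Choose a structure rich enough that any embedding of its ultrapowers is forced to come from a function $J \to I$. Take $\mathcal{A}$ with domain $A = I \cup \mathcal{P}(I)$ (made disjoint), with the following signature:
\begin{itemize}
\item a constant $c_a$ for every $a \in A$;
\item a unary predicate $P$ for $I$;
\item a binary relation $\in$ between $I$ and $\mathcal{P}(I)$.
\end{itemize}
Let $g : \mathcal{A}^I / \mathcal{U}_I \hookrightarrow \mathcal{A}^J / \mathcal{U}_J$ be an embedding. Let $d = [\mathrm{id}_I]_{\mathcal{U}_I}$, and pick a representative $k : J \to A$ of $g(d)$. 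Since $P(d)$ holds and $g$ preserves $P$, the set $\{ j \mid k(j) \in I \}$ belongs to $\mathcal{U}_J$. Modify $k$ off this set to obtain $h : J \to I$.

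Now let $X \subseteq I$. In the first ultrapower, $d \in c_X$ holds iff $X \in \mathcal{U}_I$, by Łoś. Since $g$ preserves constants, the relation $\in$, and its negation (as an embedding of relational structures preserves relations in both directions), we get
\[
X \in \mathcal{U}_I \iff g(d) \in c_X \text{ in } \mathcal{A}^J/\mathcal{U}_J \iff \{ j \mid h(j) \in X \} \in \mathcal{U}_J \iff h^{-1}[X] \in \mathcal{U}_J.
\]
So $h$ witnesses $\mathcal{U}_I \leq_\mathrm{RK} \mathcal{U}_J$.

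The one point to check carefully is the convention on embeddings. They must preserve constants and reflect relations, that is, be strong embeddings, as is standard for $\hookrightarrow$. If only relation preservation were assumed, we would fall back on functional substitutes: a unary operation $\chi_X$ sending $I$ into the two constants $0, 1$ according to membership in $X$. Injectivity of $g$ together with $0 \neq 1$ then gives the reverse direction.
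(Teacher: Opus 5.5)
Your proof is correct. The paper does not prove this proposition at all; it is quoted from Blass. The natural comparison is therefore with Theorem~\ref{theorem ultrapowers embedding 2}, which the paper presents as a refinement of it.

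\textbf{(1)$\Rightarrow$(3).} Your map $[f]_{\mathcal{U}_I}\mapsto[f\circ h]_{\mathcal{U}_J}$ is exactly the paper's $e$ with $u=\mathrm{id}_A$, since in that case $w(f)=f\circ h$. Your observation that the \L o\'{s} set for $\varphi(\bar f\circ h)$ is $h^{-1}$ of the \L o\'{s} set for $\varphi(\bar f)$ gives elementarity in one line. The paper instead checks constants, operations and relations by hand, and only obtains an embedding.

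\textbf{(2)$\Rightarrow$(1).} Here the routes genuinely differ. The ($\Rightarrow$) half of Theorem~\ref{theorem ultrapowers embedding 2} presupposes that the embedding already has the form $[f]\mapsto[w(f)]$ for a given $h$, and only shows that this $h$ is an RK-witness. It therefore says nothing about an arbitrary embedding, and the paper's own footnote stresses that most embeddings are not of that form.

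Your device of naming every element of $I\sqcup\mathcal{P}(I)$ by a constant is what closes this gap. It forces any embedding $g$ to fix the diagonal copy. An RK-witness $h$ can then be read off from a representative of $g([\mathrm{id}_I]_{\mathcal{U}_I})$, and checked against each $X$ through the relation $d\in c_X$, which $g$ preserves and reflects.

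This is exactly where the quantification over all signatures in (2) is used, and it cannot be dropped. In the empty language, $\omega^\omega/\mathcal{U}$ and $\omega^\omega/\mathcal{V}$ both have size $2^{\aleph_0}$ for all non-principal $\mathcal{U},\mathcal{V}$ on $\omega$, so each embeds into the other. Yet at most $2^{\aleph_0}$ of the $2^{2^{\aleph_0}}$ ultrafilters on $\omega$ lie $\le_{\mathrm{RK}}$-below a given $\mathcal{V}$.

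A small simplification: the predicate $P$ is redundant. The fact that $d\in c_I$ holds already forces a representative of $g(d)$ into $I$ on a $\mathcal{U}_J$-large set.
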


Since it is trivial to check that:

\begin{lemma}\label{lemma embeddings}
Let $\mathcal{A} \hookrightarrow \mathcal{B}$ and $\mathcal{U}$ be an ultrafilter over $I$. Then, $\mathcal{A}^I / \mathcal{U} \hookrightarrow \mathcal{B}^I / \mathcal{U}$.\footnote{To see so, just notice the identity map on $\mathcal{A}^I / \mathcal{U}$ is an embedding into $\mathcal{B}^I / \mathcal{U}$.}
\end{lemma}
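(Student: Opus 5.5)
The plan is to take as the witnessing map the identity on $A^I$, read through the inclusion given by $u$. Let $u : \mathcal{A} \hookrightarrow \mathcal{B}$ be the given embedding; without loss of generality I identify $\mathcal{A}$ with its image, so $A \subseteq B$ and the operations and relations of $\mathcal{A}$ are the restrictions of those of $\mathcal{B}$. In general I define $w : A^I \to B^I$ by $w(f) = u \circ f$, and set
\[
e([f]_\mathcal{U}) = [u \circ f]_\mathcal{U}.
\]
I then verify that $e$ is well defined, injective, and preserves constants, operations and relations.

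\textbf{Well-definedness and injectivity.} Because $u$ is injective, for $f, g \in A^I$ we have
\[
\{i \in I \mid f(i) = g(i)\} = \{i \in I \mid u(f(i)) = u(g(i))\}.
\]
One of these sets lies in $\mathcal{U}$ exactly when the other does. The forward direction gives that $e$ is well defined, and the backward direction gives that $e$ is injective.

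\textbf{Constants and relations.} For a constant $\mathrm{c}$, we have $u(\mathrm{c}^\mathcal{A}) = \mathrm{c}^\mathcal{B}$, so $e$ sends the class of $\overline{\mathrm{c}^\mathcal{A}}$ to the class of $\overline{\mathrm{c}^\mathcal{B}}$. For a relation $\mathrm{R}$ and $f_1, \dots, f_n \in A^I$, the set of $i$ with $\mathrm{R}^\mathcal{A}(f_1(i), \dots, f_n(i))$ equals the set of $i$ with $\mathrm{R}^\mathcal{B}(u f_1(i), \dots, u f_n(i))$, since $u$ is an embedding. So $\mathrm{R}$ holds of the classes in $\mathcal{A}^I/\mathcal{U}$ iff it holds of their images in $\mathcal{B}^I/\mathcal{U}$.

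\textbf{Operations.} For an operation $\mathrm{F}$ the same argument applies to the set of $i$ where $\mathrm{F}^\mathcal{A}(f_1(i), \dots, f_n(i)) = g(i)$. Because operations may be partial in this paper, I also check definedness. This is the only mildly delicate point. Under the standard reading, an embedding preserves the domain of definition of each partial operation both ways, and the value of $\mathrm{F}$ in the ultrapower is defined iff the coordinatewise value is defined on a set in $\mathcal{U}$. Both of these conditions transfer through $u$ coordinatewise, so definedness matches as well.

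Hence $e$ is an embedding. Nothing beyond the fact that each condition is checked on the same index set in $\mathcal{U}$ is needed. In particular, Proposition \ref{proposition rudin keisler} is not required, since the two ultrafilters coincide.
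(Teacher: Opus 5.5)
Your proof is correct and is essentially the paper's argument: the footnote's ``identity map'' is exactly your map $[f]_\mathcal{U} \mapsto [u \circ f]_\mathcal{U}$ once $\mathcal{A}$ is identified with its image in $\mathcal{B}$. You simply write out the coordinatewise checks the paper leaves implicit: well-definedness, injectivity, constants, relations, and operations, including definedness of partial ones under the both-ways reading of embedding.
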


We therefore obtain:

\begin{theorem}\label{theorem ultrapowers embedding}
Let $\mathcal{A} \hookrightarrow \mathcal{B}$ and $\mathcal{U}_I$ and $\mathcal{U}_J$ be ultrafilters over $I$ and $J$. If $\mathcal{U}_I \leq_{\mathrm{RK}} \mathcal{U}_J$, then $\mathcal{A}^I / \mathcal{U}_I \hookrightarrow \mathcal{B}^J / \mathcal{U}_J$.
\end{theorem}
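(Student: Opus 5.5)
The plan is to factor the desired embedding through the intermediate ultrapower $\mathcal{A}^J / \mathcal{U}_J$, combining Blass's result (Proposition \ref{proposition rudin keisler}) with Lemma \ref{lemma embeddings}. Since $\mathcal{U}_I \leq_{\mathrm{RK}} \mathcal{U}_J$, the implication (1) $\Rightarrow$ (2) of Proposition \ref{proposition rudin keisler}, applied to the structure $\mathcal{A}$ itself, gives an embedding
\[
e_1 : \mathcal{A}^I / \mathcal{U}_I \hookrightarrow \mathcal{A}^J / \mathcal{U}_J .
\]
Concretely, if $h : J \to I$ witnesses the Rudin-Keisler reduction, we may take $e_1([f]_{\mathcal{U}_I}) = [f \circ h]_{\mathcal{U}_J}$. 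Only the existence of $e_1$ is needed, so we can cite the proposition rather than verify this formula.

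Next, we apply Lemma \ref{lemma embeddings} to the given embedding $\mathcal{A} \hookrightarrow \mathcal{B}$ and the ultrafilter $\mathcal{U}_J$ over $J$. This yields a second embedding
\[
e_2 : \mathcal{A}^J / \mathcal{U}_J \hookrightarrow \mathcal{B}^J / \mathcal{U}_J .
\]
Explicitly, if $u : \mathcal{A} \hookrightarrow \mathcal{B}$, then $e_2([g]_{\mathcal{U}_J}) = [u \circ g]_{\mathcal{U}_J}$. The footnote's ``identity map'' description is this map once $\mathcal{A}$ is identified with its image under $u$.

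The composite $e_2 \circ e_1 : \mathcal{A}^I / \mathcal{U}_I \to \mathcal{B}^J / \mathcal{U}_J$ is the required embedding. To justify this, we check that a composition of embeddings is an embedding. Injectivity is preserved under composition. Each map commutes with the interpretations of constants and operations, and each preserves and reflects relations, so the composite does as well.

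Since the paper allows partial operations, the preservation clause should be read as follows: $\mathrm{F}(\bar x)$ is defined iff $\mathrm{F}(e(\bar x))$ is defined, and then $e(\mathrm{F}(\bar x)) = \mathrm{F}(e(\bar x))$. This condition composes just as the total case does, so nothing changes.

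The main obstacle is only this bookkeeping: making sure that Lemma \ref{lemma embeddings} and Proposition \ref{proposition rudin keisler} are used with a notion of embedding that is the same in both and is closed under composition, particularly in the partial-operation setting. Once that notion is fixed, the argument is a two-line composition.
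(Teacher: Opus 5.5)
Your proposal is correct and matches the paper's proof: the paper obtains $\mathcal{A}^I/\mathcal{U}_I \hookrightarrow \mathcal{A}^J/\mathcal{U}_J$ from Proposition \ref{proposition rudin keisler} and $\mathcal{A}^J/\mathcal{U}_J \hookrightarrow \mathcal{B}^J/\mathcal{U}_J$ from Lemma \ref{lemma embeddings}, then composes the two. Your explicit maps $[f]_{\mathcal{U}_I} \mapsto [f\circ h]_{\mathcal{U}_J}$ and $[g]_{\mathcal{U}_J} \mapsto [u\circ g]_{\mathcal{U}_J}$, and your remark on partial operations, are correct details that the paper leaves implicit.
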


\begin{proof}
By Proposition \ref{proposition rudin keisler}, $\mathcal{A}^I / \mathcal{U}_I \hookrightarrow \mathcal{A}^J / \mathcal{U}_J$, and by Lemma \ref{lemma embeddings}, $\mathcal{A}^J / \mathcal{U}_J \hookrightarrow \mathcal{B}^J / \mathcal{U}_J$, so that $\mathcal{A}^I / \mathcal{U}_I \hookrightarrow \mathcal{B}^J / \mathcal{U}_J$.
\end{proof}

Therefore, we have a sufficient criterion for the embeddability of $\mathcal{A}$ into $\mathcal{B}$ to be lifted to an embeddability of $\mathcal{A}^I / \mathcal{U}_I$ into $\mathcal{B}^J / \mathcal{U}_J$.

Despite the criterion provided by the Rudin-Keisler ordering being sufficient for embeddability to be raised to ultrapowers, it is unclear what are necessary and sufficient conditions for the raising of embeddability. We may now find stronger criteria of embeddability between ultrapowers which, in fact, do not require the embeddability of their generating structures.

For the upcoming results, we shall assume $|A| > 1$, for otherwise embeddability between ultrapowers is trivial.

\begin{definition}
Let $I$ and $J$ be sets, $H: \mathcal{P}(I) \to \mathcal{P}(J)$, and $X, Y \in \mathcal{P}(I)$. We say $H$ is:

\begin{itemize}[align=parleft, labelsep=8mm,]
\item[(a)] \emph{multiplicative} if $H(X) \cap H(Y) = H(X \cap Y)$;
\item[(b)] \emph{additive} if $H(X) \cup H(Y) = H(X \cup Y)$;
\item[(c)] \emph{subtractive} if $H(I \setminus X) = H(I) \setminus H(X)$.
\item[(d)] \emph{covering} if $H(I) = J$.
\end{itemize}
\end{definition}

\begin{proposition}\label{proposition h varnothing}
If $H: \mathcal{P}(I) \to \mathcal{P}(J)$ is subtractive and multiplicative, then $H(\varnothing) = \varnothing$.
\end{proposition}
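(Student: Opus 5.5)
The idea is to compute $H(\varnothing)$ in two ways, using subtractivity at two well-chosen sets and multiplicativity to combine them. No earlier result of the paper is needed beyond the definitions.

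First, apply subtractivity with $X = I$. Since $I \setminus I = \varnothing$, we get
\[
H(\varnothing) = H(I \setminus I) = H(I) \setminus H(I) = \varnothing .
\]
This already gives the claim, using subtractivity alone.

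If one wants to see multiplicativity play its role, there is a second derivation. For an arbitrary $X \subseteq I$, subtractivity gives $H(I \setminus X) = H(I) \setminus H(X)$, so $H(X) \cap H(I \setminus X) = \varnothing$. Multiplicativity gives $H(X) \cap H(I \setminus X) = H(X \cap (I \setminus X)) = H(\varnothing)$. Hence $H(\varnothing) = \varnothing$ again, for any choice of $X$ (for instance $X = \varnothing$ or $X = I$).

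There is essentially no obstacle. The only point needing care is that subtractivity is stated for every $X \in \mathcal{P}(I)$, including $X = I$, and that $H(I) \setminus H(I)$ is empty regardless of what $H(I)$ is. In particular, the covering property is not needed.
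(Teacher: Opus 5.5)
Your proof is correct, and your first argument takes a different, more economical route than the paper's. The paper instantiates subtractivity at $X=\varnothing$, getting $H(I)=H(I)\setminus H(\varnothing)$. It then uses multiplicativity to write $H(\varnothing)=H(\varnothing\cap I)=H(\varnothing)\cap H(I)=H(\varnothing)\cap\bigl(H(I)\setminus H(\varnothing)\bigr)=\varnothing$. This is exactly your second derivation with $X=\varnothing$.

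You instead instantiate subtractivity at $X=I$, which gives $H(\varnothing)=H(I)\setminus H(I)=\varnothing$ in one step. This shows that the multiplicativity hypothesis, like covering, is superfluous for this proposition. The paper's route gains nothing logically here. Its intersection computation is the same pattern it reuses in the next proposition on partitions, where multiplicativity is actually used.
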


\begin{proof}
Since $H$ is subtractive and multiplicative, $H(\varnothing) = H(\varnothing \cap I) = H(\varnothing) \cap H(I) = \linebreak H(\varnothing) \cap \big{(} H(I) \setminus H(\varnothing) \big{)} = \varnothing$.
\end{proof}

\begin{proposition}\label{proposition function ultrafilters partition}
Let $H: \mathcal{P}(I) \to \mathcal{P}(J)$ be additive, multiplicative, subtractive and covering. If $\{X_i\}_{i < \kappa}$ is a partition of $I$, then $\{H(X_i)\}_{i < \kappa}$ is a partition of $J$. 
\end{proposition}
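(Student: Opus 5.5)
The plan is to split the claim into its two halves, \emph{pairwise disjointness} and \emph{covering}, and to treat them separately.

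\textbf{Disjointness.} Take $i \neq i'$. Since $X_i \cap X_{i'} = \varnothing$, multiplicativity gives $H(X_i) \cap H(X_{i'}) = H(X_i \cap X_{i'}) = H(\varnothing)$. By Proposition \ref{proposition h varnothing}, which applies because $H$ is subtractive and multiplicative, $H(\varnothing) = \varnothing$. This half is immediate and works for every $\kappa$. If the paper's notion of partition requires nonempty blocks, I would not try to show $H(X_i) \neq \varnothing$, since that can fail; I would read the statement as allowing empty blocks.

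\textbf{Covering.} I would reorganise this pointwise. For $j \in J$, set $\mathcal{U}_j = \{X \subseteq I \mid j \in H(X)\}$.
\begin{itemize}
\item $\mathcal{U}_j$ contains $I$, because $H$ is covering.
\item $\mathcal{U}_j$ omits $\varnothing$, by Proposition \ref{proposition h varnothing}.
\item $\mathcal{U}_j$ is upward closed, since $X \subseteq Y$ gives $H(Y) = H(X \cup Y) = H(X) \cup H(Y)$ by additivity, so $H(X) \subseteq H(Y)$.
\item $\mathcal{U}_j$ is closed under finite intersections, by multiplicativity.
\item For each $X$, exactly one of $X$ and $I \setminus X$ lies in $\mathcal{U}_j$, since subtractivity gives $H(I \setminus X) = J \setminus H(X)$.
\end{itemize}
So $\mathcal{U}_j$ is an ultrafilter on $I$. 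The claim $j \in \bigcup_{i<\kappa} H(X_i)$ then says exactly that $\mathcal{U}_j$ contains some block $X_i$ of the partition.

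\textbf{Finite $\kappa$.} Induction on $\kappa$ using additivity gives $\bigcup_{i<\kappa} H(X_i) = H(\bigcup_{i<\kappa} X_i) = H(I) = J$. Equivalently, an ultrafilter contains one block of any finite partition. Together with disjointness this settles the finite case.

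\textbf{Infinite $\kappa$: the main obstacle.} The plan is to show that each $\mathcal{U}_j$ meets every partition of $I$. I would first try to extract this from the hypotheses, using the closure properties of $\mathcal{U}_j$ under unions induced by $H$. However, finite additivity only yields finite unions, and I expect this step to fail in general.

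Concretely, let $\mathcal{V}$ be a non-principal ultrafilter on $I$ and $J = \{\ast\}$, and put $H(X) = J$ if $X \in \mathcal{V}$ and $H(X) = \varnothing$ otherwise. This $H$ satisfies all four conditions. Yet the partition of $I$ into singletons is sent to $\{\varnothing\}_{i<\kappa}$, which does not cover $J$.

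So for infinite $\kappa$ the argument goes through only under an additional hypothesis. One option is that $H$ preserves unions of length $\kappa$, as $H = h^{-1}[\,\cdot\,]$ does in the intended application. The other is that every $\mathcal{U}_j$ is $\kappa^+$-complete, which by the pointwise reformulation is exactly what is needed. I would state the proof with that hypothesis made explicit, rather than claim the infinite case from additivity alone.
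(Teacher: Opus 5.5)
Your proposal is correct, and on the covering half it is more careful than the paper. Your disjointness argument is the same as the paper's. For covering, the paper writes $J = H(I) = H(\bigcup_{i<\kappa} X_i) = \bigcup_{i<\kappa} H(X_i)$ and cites additivity. But additivity as defined concerns two sets, so it only extends to finite unions. That is exactly the step you decline to take.

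Your counterexample shows the step cannot be rescued. Take $H(X) = \{\ast\}$ for $X \in \mathcal{V}$ and $H(X) = \varnothing$ otherwise. Multiplicativity follows from $\mathcal{V}$ being a filter. Additivity follows from its primeness ($X \cup Y \in \mathcal{V}$ forces $X \in \mathcal{V}$ or $Y \in \mathcal{V}$). Subtractivity follows from its being an ultrafilter, and covering from $I \in \mathcal{V}$. Yet the partition of $I$ into singletons (already for $I = \omega$) is sent to a family of empty sets. So the proposition holds for finite $\kappa$ but fails as stated for infinite partitions, and you are right not to claim that case. Reading ``partition'' as allowing empty blocks also matches the paper's use of the result: in Lemma \ref{lemma ultrapowers embedding} the fibres $\{j \in I \mid f(j) = a\}$ are empty for $a \notin \mathrm{img}(f)$.

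What your pointwise reformulation buys is the exact hypothesis under which the statement is true. The four conditions say precisely that $H$ is a Boolean algebra homomorphism, i.e.\ that every $\mathcal{U}_j$ is an ultrafilter. By Proposition \ref{proposition kreisel}, $H$ sends every partition of $I$ into at most $\kappa$ pieces to a cover of $J$ iff every $\mathcal{U}_j$ is $\kappa^+$-complete. It does so for all partitions iff every $\mathcal{U}_j$ is principal (consider the partition into singletons), say generated by $\{h(j)\}$. That is, iff $H = h^{-1}[\,\cdot\,]$ for some $h : J \to I$.

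This is the only case the paper actually needs. Lemma \ref{lemma ultrapowers embedding} relies on the same infinitary step twice: for the well-definedness of $w(f)$, and in computing $H(\bigcup_{a \in A} \cdots)$ with $A$ possibly infinite. Theorem \ref{theorem ultrapowers embedding 2}, however, applies it only to $H = h^{-1}[\,\cdot\,]$, which preserves arbitrary unions; there $w(f) = u \circ f \circ h$. So the downstream results survive once this proposition and Lemma \ref{lemma ultrapowers embedding} are stated for completely additive $H$ (or directly for $H = h^{-1}[\,\cdot\,]$). That is the repair you propose.
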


\begin{proof}
Since $X_i \cap X_j = \varnothing$ for $i \neq j$ and $H$ is multiplicative and subtractive, by Proposition \ref{proposition h varnothing}, $H(X_i) \cap H(X_j) = H(X_i \cap X_j) = H(\varnothing) = \varnothing$. Since $H$ is additive and covering, $j \in J = H(I) = H(\bigcup_{i < \kappa} X_i) = \bigcup_{i < \kappa} H(X_i)$.
\end{proof}

\begin{lemma}\label{lemma ultrapowers embedding}
Let $H: \mathcal{P}(I) \to \mathcal{P}(J)$ be additive, multiplicative, subtractive and covering, and $u: A \to B$ be an injective mapping. For each $f \in A^I$, let $w(f) \in B^J$ be the function such that for each $a \in A$,

\begin{center}
 $\{j \in J \mid w(f)(j) = u(a)\} = H(\{j \in I \mid f(j) = a\})$.
\end{center}

\noindent
Then, for $f, g \in A^I$,

\begin{itemize}
\item[] $H(\{j \in I \mid f(j) = g(j)\}) = \{j \in J \mid w(f)(j) = w(g)(j)\}$, and
\item[] $H(\{j \in I \mid f(j) \neq g(j)\}) = \{j \in J \mid w(f)(j) \neq w(g)(j)\}$.
\end{itemize}
\end{lemma}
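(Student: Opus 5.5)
Because $H$ is additive, multiplicative, subtractive and covering, Proposition \ref{proposition function ultrafilters partition} applies to the partition $\{f^{-1}(a)\}_{a \in A}$ of $I$ (discarding empty pieces, which go to $\varnothing$ by Proposition \ref{proposition h varnothing}). So the sets $H(f^{-1}(a))$ partition $J$, and $w(f)$ is well defined: for $j \in J$ there is a unique $a$ with $j \in H(f^{-1}(a))$, and we set $w(f)(j) = u(a)$. In particular $w(f)$ takes values in $u[A]$, and
\[
j \in H(f^{-1}(a)) \iff w(f)(j) = u(a).
\]

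For the first identity, I decompose the equalizer as $\{i \in I \mid f(i) = g(i)\} = \bigcup_{a \in A} \bigl(f^{-1}(a) \cap g^{-1}(a)\bigr)$. Multiplicativity gives $H(f^{-1}(a) \cap g^{-1}(a)) = H(f^{-1}(a)) \cap H(g^{-1}(a))$, which is exactly $\{j \mid w(f)(j) = u(a) = w(g)(j)\}$. Taking the union over $a$, and using that both $w(f)$ and $w(g)$ take values in $u[A]$, yields $\{j \in J \mid w(f)(j) = w(g)(j)\}$. The injectivity of $u$ is used here: $w(f)(j) = w(g)(j)$ with $w(f)(j) = u(a)$ and $w(g)(j) = u(b)$ forces $a = b$.

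The main obstacle is pushing $H$ through the union over $a \in A$. Additivity as stated only covers binary, hence finite, unions, and $A$ may be infinite. I would first try to get around this using the partition structure instead of infinite additivity. Let $E$ be the equalizer and $D = I \setminus E$. Subtractivity and covering give $H(D) = J \setminus H(E)$. For each $a$, the set $X_a = f^{-1}(a) \cap g^{-1}(a)$ is contained in $E$, so multiplicativity gives $H(X_a) = H(X_a \cap E) \subseteq H(E)$. This already proves $\{j \mid w(f)(j) = w(g)(j)\} = \bigcup_a H(X_a) \subseteq H(E)$.

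For the reverse inclusion, take $j \in H(E)$. Let $a$ and $b$ be the unique elements with $j \in H(f^{-1}(a)) \cap H(g^{-1}(b))$. Then $j \in H(E \cap f^{-1}(a) \cap g^{-1}(b))$ by multiplicativity. If $a \neq b$, this set is $H(\varnothing) = \varnothing$ by Proposition \ref{proposition h varnothing}, a contradiction. Hence $a = b$, so $w(f)(j) = w(g)(j)$. This argument avoids infinite unions entirely, which is the approach I would commit to.

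The second identity then follows by complementation. Applying subtractivity and covering gives
\[
H(\{i \mid f(i) \neq g(i)\}) = J \setminus H(\{i \mid f(i) = g(i)\}),
\]
and substituting the first identity turns the right-hand side into $\{j \in J \mid w(f)(j) \neq w(g)(j)\}$.
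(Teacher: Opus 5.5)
Granting Proposition~\ref{proposition function ultrafilters partition}, your proof is correct, and for the first identity it takes a different route from the paper. The paper writes the equalizer as $\bigcup_{a\in A}\bigl(f^{-1}(a)\cap g^{-1}(a)\bigr)$ and pushes $H$ directly through this possibly infinite union before applying multiplicativity termwise. You instead obtain $\bigcup_a H(X_a)\subseteq H(E)$ by monotonicity. You get the reverse inclusion pointwise, by intersecting $H(E)$ with $H(f^{-1}(a))\cap H(g^{-1}(b))$ and using $H(\varnothing)=\varnothing$. Your second identity is handled exactly as in the paper.

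However, your claim that this avoids infinite unions entirely is not right. Choosing $a,b$ with $j\in H(f^{-1}(a))\cap H(g^{-1}(b))$ rests on $J=\bigcup_{a\in A}H(f^{-1}(a))$. So does the fact that $w(f)$ takes values in $u[A]$. That covering statement is the covering half of Proposition~\ref{proposition function ultrafilters partition}, and the paper proves it by pushing $H$ through an infinite union, so it does not follow from binary additivity.

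Nor can it be avoided, because with binary additivity alone the lemma is false. Let $I=A=\omega$, $J=\{0\}$, $B=\omega\cup\{\infty\}$, $u$ the inclusion, and $\mathcal{V}$ a non-principal ultrafilter on $\omega$. Set $H(X)=J$ if $X\in\mathcal{V}$ and $H(X)=\varnothing$ otherwise. This $H$ is multiplicative, binary additive, subtractive and covering. For $f(n)=n$ and $g(n)=n+1$ all fibres are finite, so $w(f)$ and $w(g)$ are both forced to be the constant $\infty$. Yet $H(\{i\in I\mid f(i)=g(i)\})=H(\varnothing)=\varnothing\neq J$.

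So ``additive'' has to mean preservation of arbitrary unions. That holds for $H=h^{-1}[\,\cdot\,]$, the only case used in Theorem~\ref{theorem ultrapowers embedding 2}. What your route genuinely buys is a sharper accounting of the infinitary input. Beyond finitary Boolean-homomorphism properties, it needs only that for each $f\in A^I$ the sets $H(f^{-1}(a))$ cover $J$. The paper also uses infinite additivity for the equalizer itself.
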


\begin{proof}
Notice $w(f)$ is well defined because $u$ is injective, and that by Proposition \ref{proposition function ultrafilters partition}, $w(f)$ is indeed unique. Furthermore, we have

\begin{center}
$\{j \in I \mid f(j) = g(j)\} = \bigcup_{a \in A} (\{j \in I \mid f(j) = a\} \cap \{j \in I \mid g(j) = a\})$,
\end{center}

\noindent
so that $H(\{j \in I \mid f(j) = g(j)\}) =$

\medskip

\begin{tabular}{ll}
$=$ & $H \big{(} \bigcup_{a \in A} (\{j \in I \mid f(j) = a\} \cap \{j \in I \mid g(j) = a\}) \big{)}$\\
$=$ & $\bigcup_{a \in A} H(\{j \in I \mid f(j) = a\} \cap \{j \in I \mid g(j) = a\})$\\
$=$ & $\bigcup_{a \in A} \big{(} H(\{j \in I \mid f(j) = a\}) \cap H(\{j \in I \mid g(j) = a\}) \big{)}$\\
$=$ & $\bigcup_{a \in A} (\{j \in J \mid w(f)(j) = u(a)\}) \cap \{j \in J \mid w(g)(j) = u(a)\})$\\
$=$ & $\{j \in J \mid w(f)(j) = w(g)(j)\}$,
\end{tabular}

\medskip

\noindent
since $\mathrm{img} \big{(} w(f) \big{)} = \mathrm{img} \big{(} w(g) \big{)} = u[A]$. Similarly, we have

\medskip

\begin{tabular}{lll}
$H(\{j \in I \mid f(j) \neq g(j)\})$ & $=$ & $H(I \setminus \{j \in I \mid f(j) = g(j)\})$\\
 & $=$ & $J \setminus H(\{j \in I \mid f(j) = g(j)\})$\\
 & $=$ & $J \setminus \{j \in J \mid w(f)(j) = w(g)(j)\}$\\
 & $=$ & $\{j \in J \mid w(f)(j) \neq w(g)(j)\}$.
\end{tabular}

\medskip

\end{proof}

\begin{lemma}\label{lemma ultrapowers embedding 2}
Let $h: J \to I$ be a mapping and $H: \mathcal{P}(I) \to \mathcal{P}(J); X \mapsto h^{-1}[X]$. Then $H$ is additive, multiplicative, subtractive and covering.
\end{lemma}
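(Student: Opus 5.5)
The plan is to check each of the four properties directly from the definition of preimage, working elementwise on $J$; no earlier result is needed beyond the definitions of (a)--(d). Throughout, we use that for $j \in J$ we have $j \in h^{-1}[X]$ iff $h(j) \in X$. Each property then reduces to a Boolean identity about the single point $h(j) \in I$.

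\emph{Multiplicative and additive.} For $X, Y \subseteq I$ and $j \in J$, we have $j \in h^{-1}[X \cap Y]$ iff $h(j) \in X$ and $h(j) \in Y$. This holds iff $j \in h^{-1}[X] \cap h^{-1}[Y]$, so $H(X \cap Y) = H(X) \cap H(Y)$. The same argument with ``or'' in place of ``and'' gives $H(X \cup Y) = H(X) \cup H(Y)$.

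Since Proposition~\ref{proposition function ultrafilters partition} and Lemma~\ref{lemma ultrapowers embedding} in fact use additivity for arbitrary unions $\bigcup_{a \in A}$ and $\bigcup_{i<\kappa}$, I will prove the stronger statement $h^{-1}\big[\bigcup_{i} X_i\big] = \bigcup_i h^{-1}[X_i]$ for any family. The proof is identical: $h(j) \in \bigcup_i X_i$ iff there is some $i$ with $h(j) \in X_i$. This stronger form is what later applications need.

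\emph{Covering and subtractive.} Since $h$ is a total function from $J$ into $I$, every $j \in J$ satisfies $h(j) \in I$. Hence $H(I) = h^{-1}[I] = J$, which is covering. For subtractivity, take $j \in J$. Then $j \in h^{-1}[I \setminus X]$ iff $h(j) \notin X$, iff $j \notin h^{-1}[X]$. Because $H(I) = J$, this says exactly that $j \in H(I) \setminus H(X)$, so $H(I \setminus X) = H(I) \setminus H(X)$.

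No step is a real obstacle. The only point needing care is that subtractivity relies on covering, i.e.\ on $h$ being total on $J$. I will also make explicit that additivity holds for infinite unions, so that the earlier partition argument goes through.
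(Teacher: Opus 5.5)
Your proof is correct and follows the same route as the paper: covering comes from $h$ being total on $J$, and multiplicativity, additivity and subtractivity are each checked by chasing membership through $h^{-1}$, exactly as the paper does for multiplicativity before saying the rest are ``similar.'' Your extra step showing that $h^{-1}$ preserves arbitrary unions is a worthwhile addition, because the paper defines additivity only for binary unions while Proposition~\ref{proposition function ultrafilters partition} and Lemma~\ref{lemma ultrapowers embedding} silently apply it to infinite unions.
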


\begin{proof} 
Clearly, $H(I) = h^{-1}[I] = J$, so $H$ is covering. To see it is multiplicative, notice

\medskip

\begin{tabular}{lll}
$H(X \cap Y)$ & $=$ & $h^{-1}[X \cap Y]$\\
 & $=$ & $\{j \in J \mid h(j) \in X \cap Y\}$\\
 & $=$ & $\{j \in J \mid h(j) \in X\} \cap \{j \in J \mid h(j) \in Y\}$\\
 & $=$ & $h^{-1}[X] \cap h^{-1}[Y]$\\
 & $=$ & $H(X) \cap H(Y)$\\
\end{tabular}

\medskip

\noindent
By a similar argument, we may see it is additive and subtractive.
\end{proof}

\begin{theorem}\label{theorem ultrapowers embedding 2}
Let $\mathcal{A}$ and $\mathcal{B}$ be $\sigma$-structures, $u: \mathcal{A} \hookrightarrow \mathcal{B}$, $h: J \to I$ be a mapping, and $\mathcal{U}_I$ and $\mathcal{U}_J$ be ultrafilters over $I$ and $J$. Let also $H: X \mapsto h^{-1}[X]$ and define $w: A^I \to B^J$ as in \emph{Lemma \ref{lemma ultrapowers embedding}}. Define $e: \mathcal{A}^I / \mathcal{U}_I \to \mathcal{B}^J / \mathcal{U}_J ; [f]_{\mathcal{U}_I} \mapsto [w(f)]_{\mathcal{U}_J}$. Then $e$ is an embedding iff  $X \in \mathcal{U}_I \Leftrightarrow h^{-1}(X) \in \mathcal{U}_J$.
\end{theorem}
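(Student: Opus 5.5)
The plan rests on one observation. Since $H(X)=h^{-1}[X]$, the defining condition on $w(f)$ says that $w(f)(j)=u(a)$ exactly when $f(h(j))=a$. So
\[
w(f)=u\circ f\circ h .
\]
Uniqueness here is what Lemma \ref{lemma ultrapowers embedding 2} together with Proposition \ref{proposition function ultrafilters partition} gives. I will first record this identity. Every set computation in $J$ is then a pullback under $h$ of the corresponding set in $I$, and by Lemma \ref{lemma ultrapowers embedding},
\[
\{j \in J \mid w(f)(j)=w(g)(j)\}=h^{-1}[\{i\in I\mid f(i)=g(i)\}].
\]
The two directions are then handled separately.

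\emph{($\Leftarrow$)} Assume $X\in\mathcal{U}_I \Leftrightarrow h^{-1}[X]\in\mathcal{U}_J$. Applying this to the equalizer set of $f,g$, we get $[f]_{\mathcal{U}_I}=[g]_{\mathcal{U}_I}$ iff $[w(f)]_{\mathcal{U}_J}=[w(g)]_{\mathcal{U}_J}$. This gives both well-definedness and injectivity of $e$.

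For the structure:
\begin{itemize}
\item \emph{Constants.} We have $w(\overline{\mathrm{c}^\mathcal{A}})=\overline{u(\mathrm{c}^\mathcal{A})}=\overline{\mathrm{c}^\mathcal{B}}$, because $u$ is an embedding.
\item \emph{Relations.} Since $u$ is an embedding, $\mathrm{R}^\mathcal{B}(w(f_1)(j),\dots)$ holds iff $\mathrm{R}^\mathcal{A}(f_1(h(j)),\dots)$ holds. So the truth set in $J$ is $h^{-1}$ of the truth set in $I$, and the hypothesis transfers membership in the ultrafilter.
\item \emph{Operations.} Pointwise, $\mathrm{F}^\mathcal{B}(w(f_1)(j),\dots)=u(\mathrm{F}^\mathcal{A}(f_1(h(j)),\dots))$ wherever the right side is defined. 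Because operations may be partial, I will also check that the set where $\mathrm{F}^\mathcal{B}$ is defined on the $w(f_k)$ is $h^{-1}$ of the set where $\mathrm{F}^\mathcal{A}$ is defined on the $f_k$. This uses that $u$ is an embedding of partial structures, so it preserves and reflects definedness. Then $e(\mathrm{F}([f_1],\dots))=\mathrm{F}(e[f_1],\dots)$, and definedness in the ultrapowers agrees by the hypothesis.
\end{itemize}

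\emph{($\Rightarrow$)} Assume $e$ is a well-defined embedding. Using the standing assumption $|A|>1$, fix $a\neq b$ in $A$. For $X\subseteq I$, let $f$ take the value $a$ on $X$ and $b$ off $X$, and let $g=\overline{a}$. The equalizer of $f,g$ is $X$, and the equalizer of $w(f),w(g)$ is $h^{-1}[X]$.
\begin{itemize}
\item Well-definedness of $e$: if $X\in\mathcal{U}_I$ then $[f]=[g]$, so $[w(f)]=[w(g)]$, i.e. $h^{-1}[X]\in\mathcal{U}_J$.
\item Injectivity of $e$: if $h^{-1}[X]\in\mathcal{U}_J$ then $[w(f)]=[w(g)]$, so $[f]=[g]$, i.e. $X\in\mathcal{U}_I$.
\end{itemize}

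I expect the only delicate point to be the partial-operation bookkeeping in ($\Leftarrow$). This means fixing what "defined in the ultrapower" means (defined on a $\mathcal{U}$-large set) and confirming that $u$ reflects definedness. The rest is routine once $w(f)=u\circ f\circ h$ is established.
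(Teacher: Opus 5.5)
Your proof is correct. The ($\Rightarrow$) half is the paper's argument: choose $f_X,g_X$ with equalizer $X$, then read off the two halves of the Rudin--Keisler condition from well-definedness and injectivity of $e$. Your explicit choice $f=a$ on $X$, $b$ off $X$, $g=\overline{a}$ just makes precise the paper's appeal to ``every possible permutation''.

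The ($\Leftarrow$) half takes a different route.
\begin{itemize}
\item The paper never computes $w$. It works through Lemma \ref{lemma ultrapowers embedding}, which only says that $H$ carries equalizer and non-equalizer sets to the corresponding sets in $J$. So for relations it needs a patching trick: fix $k$ in the truth set $Z$, redefine each $f_i$ off $Z$ by $f_i(k)$ so that $\mathrm{R}$ holds everywhere, push this to $J$ using that $u$ preserves $\mathrm{R}$, and intersect with the transferred equalizer sets. It then dismisses the reverse implication as ``analogous''.
\item Your identity $w(f)=u\circ f\circ h$ makes every truth set and every domain-of-definition set in $J$ literally the $h$-preimage of the corresponding set in $I$, since $u$ preserves and reflects relations and definedness. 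Well-definedness, injectivity, both directions for relations, and (partial) operations then all follow from the single hypothesis $X\in\mathcal{U}_I\Leftrightarrow h^{-1}[X]\in\mathcal{U}_J$.
\end{itemize}

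Your version is shorter and more uniform, and it supplies the step hidden in the paper's ``analogously''. Deducing $\mathrm{R}^{\mathcal{A}'}$ from $\mathrm{R}^{\mathcal{B}'}$ requires knowing that the truth set in $J$ has the form $H(Y)$, which Lemma \ref{lemma ultrapowers embedding} does not provide.

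You also make two things explicit that the paper does not:
\begin{itemize}
\item $e$ is a well-defined map only under the forward half of the condition. The paper asserts operation-preservation ``for any choice'' of $u$ and $h$, which tacitly presupposes that $e$ is a function.
\item You fix conventions for partial operations (definedness on a $\mathcal{U}$-large set, and $u$ reflecting definedness), which the paper leaves unstated.
\end{itemize}

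What the paper's formulation buys is that its lemma is phrased for an abstract Boolean map $H$ rather than a preimage map, which hints at generalizations. For the theorem as stated, with $H=h^{-1}$, your concrete description loses nothing.
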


\begin{proof}
For simplicity, we write $\mathcal{A}'$ for $\mathcal{A}^I / \mathcal{U}_I$ and $\mathcal{B}'$ for $\mathcal{B}^J / \mathcal{U}_J$. Notice by Lemma \ref{lemma ultrapowers embedding 2}, $H$ is multiplicative, additive, subtractive and covering, so we may indeed define $w$ as in Lemma \ref{lemma ultrapowers embedding}. Now, for a constant $\mathrm{c}$, $w(\overline{\mathrm{c}^\mathcal{A}}) = \overline{\mathrm{c}^\mathcal{B}}$, so $e(\mathrm{c}^{\mathcal{A}'}) = e([\overline{\mathrm{c}^\mathcal{A}}]_{\mathcal{U}_I}) = [\overline{\mathrm{c}^\mathcal{B}}]_{\mathcal{U}_J} = \mathrm{c}^{\mathcal{B}'}$. Let now $\mathrm{F}^{\mathcal{A}'}([f_0]_{\mathcal{U}_I}, ..., [f_n]_{\mathcal{U}_I}) = [g]_{\mathcal{U}_I}$. One might check that $w \big{(} \mathrm{F}^{\mathcal{A}^I}(f_0, ..., f_n) \big{)} = \mathrm{F}^{\mathcal{B}^J} \big{(} w(f_0), ..., w(f_n) \big{)}$, so we get

\medskip

\begin{tabular}{lll}
$e \big{(} \mathrm{F}^{\mathcal{A}'}([f_0]_{\mathcal{U}_I}, ..., [f_n]_{\mathcal{U}_I}) \big{)}$ & $=$ & $e \big{(} [\mathrm{F}^{\mathcal{A}^I}(f_0, ..., f_n)]_{\mathcal{U}_I} \big{)}$\\
 & $=$ & $\big{[} w \big{(} \mathrm{F}^{\mathcal{A}^I}(f_0, ..., f_n) \big{)} \big{]}_{\mathcal{U}_J}$\\
 & $=$ & $\big{[} \mathrm{F}{\mathcal{B}^J} \big{(} w(f_0), ..., w(f_n) \big{)} \big{]}_{\mathcal{U}_J}$\\
 & $=$ & $\mathrm{F}^{\mathcal{B}'} \big{(} [w(f_0)]_{\mathcal{U}_J}, ..., [w(f_n)]_{\mathcal{U}_J} \big{)}$\\
 & $=$ & $\mathrm{F}^{\mathcal{B}'} \big{(} e([f_0]_{\mathcal{U}_I}), ..., e([f_n]_{\mathcal{U}_I}) \big{)}$. 
\end{tabular}

\medskip

\noindent
Therefore, $e$ preserves the operations and constants for any choice of mappings $u$ (as long as it is injective) and $h$. We now proceed to showing the remaining of each direction of the equivalence.

($\Rightarrow$) Suppose $e$ is an embedding. For each $X \subseteq I$, choose a unique pair $f_X, g_X \in A^I$ such that $X = \{j \in I \mid f_X(j) = g_X(j)\}$. Since $A^I$ contains every possible permutation, there are always such $f_X$ and $g_X$. By Lemma \ref{lemma ultrapowers embedding}, $H: X \mapsto \{j \in J \mid w(f_X)(j) = w(g_X)(j)\}$. That means \linebreak $X = \{j \in I \mid f_X(j) = g_X(j)\} \in \mathcal{U}_I$ iff $[f_X]_{\mathcal{U}_I} = [g_X]_{\mathcal{U}_I}$ iff $[w(f_X)]_{\mathcal{U}_J} = [w(g_X)]_{\mathcal{U}_J}$ iff \linebreak $\{j \in J \mid w(f_X)(j) = w(g_X)(j)\} = H(X) = h^{-1}[X] \in \mathcal{U}_J$.

($\Leftarrow$) Let $X \in \mathcal{U}_I$ iff $H(X) \in \mathcal{U}_J$. Let now $\mathrm{R}^{\mathcal{A}'}[f_0]_{\mathcal{U}_I} ... [f_n]_{\mathcal{U}_I}$. Then $\{j \in I \mid \mathrm{R}^\mathcal{A} f_0(j) ... f_n(j)\} \in \mathcal{U}_I$. For convenience, call that set $Z$, and let $k \in Z$ and $g_0, ..., g_n$ be such that $g_i(j) = f_i(j)$ for any $j \in Z$, and $g_i(j) = f_i(k)$ otherwise. Then, we get $\{j \in I \mid \mathrm{R}^\mathcal{A} g_0(j) ... g_n(j)\} = I$, and $Z \subseteq \{j \in I \mid g_i(j) = f_i(j)\} \in \mathcal{U}_I$. That means we have $\{j \in J \mid \mathrm{R}^\mathcal{B} w(g_0)(j) ... w(g_n)(j)\} = J \in \mathcal{U}_J$, and by Lemma \ref{lemma ultrapowers embedding} and our supposition, $H(\{j \in I \mid g_i(j) = f_i(j)\}) = \{j \in J \mid w(g_i)(j) = w(f_i)(j)\} \in \mathcal{U}_J$, so that

\begin{center}
$\{j \in J \mid \mathrm{R}^\mathcal{B} w(g_0)(j) ... w(g_n)(j)\} \cap (\bigcap_{i \leq n} \{j \in J \mid w(g_i)(j) = w(f_i)(j)\}) \subseteq \{j \in J \mid \mathrm{R}^\mathcal{B} w(f_0)(j) ... w(f_n)(j)\} \in \mathcal{U}_J$.
\end{center}

\noindent
Thus, $\mathrm{R}^{\mathcal{B}'}[w(f_0)]_{\mathcal{U}_J}...[w(f_n)]_{\mathcal{U}_J}$, that is, that $\mathrm{R}^{\mathcal{B}'} e \big{(} [f_0]_{\mathcal{U}_I} \big{)} ... e \big{(} [f_n]_{\mathcal{U}_I} \big{)}$. The other direction (that is, $\mathrm{R}^{\mathcal{B}'} e \big{(} [f_0]_{\mathcal{U}_I} \big{)} ... e \big{(} [f_n]_{\mathcal{U}_I} \big{)} \Rightarrow \mathrm{R}^{\mathcal{A}'}[f_0]_{\mathcal{U}_I} ... [f_n]_{\mathcal{U}_I}$) follows analogously. Suppose now $[f]_{\mathcal{U}_I} \neq [g]_{\mathcal{U}_I}$, so that $\{j \in I \mid f(j) \neq g(j)\} \in \mathcal{U}_I$. That means with Lemma \ref{lemma ultrapowers embedding} and our supposition, we have $\{j \in J \mid w(f)(j) \neq w(g)(j)\} = H(\{j \in I \mid f(j) \neq g(j)\}) \in \mathcal{U}_J$. Therefore, $e([f]_{\mathcal{U}_I}) = [w(f)]_{\mathcal{U}_J} \neq [w(g)]_{\mathcal{U}_J} = e([g]_{\mathcal{U}_I})$.
\end{proof}

Notice, if it were the case that any embedding between ultrapowers is of the form defined in Theorem \ref{theorem ultrapowers embedding 2}, then given $\mathcal{A} \hookrightarrow \mathcal{B}$, $X \in \mathcal{U}_I \Leftrightarrow h^{-1}(X) \in \mathcal{U}_J$ becomes a necessary and sufficient criterion for the embeddability of $\mathcal{A}^I / \mathcal{U}_I$ into $\mathcal{B}^J / \mathcal{U}_J$. Notice that fails to be the case in general.\footnote{In the empty language, if $A$ and $B$ are countably infinite, $I = J = \omega$, and $\mathcal{U}_I$ and $\mathcal{U}_J$ are non-principal ultrafilters over $\omega$, there are $2^{2^\omega}$ embeddings from $\mathcal{A}^\omega / \mathcal{U}_I$ into $\mathcal{B}^\omega / \mathcal{U}_J$, but only $2^\omega$ of them can be induced by maps $u: A \to B$ and $h: \omega \to \omega$.} As Blass has shows \cite{blass}, not every elementary embedding between $\mathcal{A}^I / \mathcal{U}_I$ and $\mathcal{A}^J / \mathcal{U}_J$ are induced by a morphism between $\mathcal{U}_I$ and $\mathcal{U}_J$: elementary embeddings induced by such morphisms are natural, while isomorphisms between saturated structures, for example, tend to be unnatural. Thus, we might raise the following problem:

\medskip

\begin{itemize}
\item[] \textbf{Open Problem 1:} Given that $\mathcal{A} \hookrightarrow \mathcal{B}$, is every \emph{definable} embedding $e: \mathcal{A}^I / \mathcal{U}_I \to \mathcal{B}^J / \mathcal{U}_J$ of the form $[f]_{\mathcal{U}_I} \mapsto [w(f)]_{\mathcal{U}_J}$ for some $u: \mathcal{A} \hookrightarrow \mathcal{B}$ and mapping $h: J \to I$? If not, what is a necessary and sufficient criterion for embeddability between $\mathcal{A}^I / \mathcal{U}_I$ and $\mathcal{B}^J / \mathcal{U}_J$?\footnote{For a similar question, with a positive answer, on the form of elementary embeddings between $\mathcal{A}^I / \mathcal{U}_I$ and $\mathcal{A}^J / \mathcal{U}_J$, one may see \cite{blass}, p. 85, Proposition 5.}
\end{itemize} 

\medskip

\section{Embedding ultrapowers into direct powers}

When does an ultrapower embed into its corresponding direct power, that is, when is it the case that $\mathcal{A}^I / \mathcal{U} \hookrightarrow \mathcal{A}^I$? The answer to the question depends on the ultrafilter of choice. For example, if $\mathcal{A}$ is an ordered structure and $\mathcal{U}$ is non-principal, then $\mathcal{A}^I / \mathcal{U}$ has elements which dominate the image of the natural embedding. However, no matter the choice of representatives, there is no element in the direct power $\mathcal{A}^I$ which assumes a value greater than the values of every representative of the equivalence classes of the image of the natural embedding. On the other hand, if $\mathcal{U}$ is principal, the embedding is trivial.

To try to answer that, we start by considering the functions of a structure. Notice the issue with the direct power's not being a field is there being functions with $0$ in their image, but which are distinct from $\overline{0}$. So, if one wants to retain multiplicative inverses, one should exclude any function containing elements without a multiplicative inverse in its image, that is, any function whose support is not its whole domain. However, for the desired isomorphism, there should still be a representative of each element not possessing multiplicative inverses. To generalize that for arbitrary structures, we start with the following definitions. 

For the next results, we let $\sigma$ be a signature, $\mathcal{A}$ be a $\sigma$-structure, $I$ be an index set, and $\mathcal{U}$ be an ultrafilter over it.

\begin{definition}[Support]\label{def support}
Define the set $\mathrm{supp}(\mathcal{A})$, the \emph{support of $\mathcal{A}$}, such that

\medskip

\begin{center}
$a \in \mathrm{supp}(\mathcal{A})$ iff $\mathrm{F}(a, ..., x_{\mathrm{ar}(\mathrm{F})}), ..., \mathrm{F}(x_1, ..., a, ... x_{\mathrm{ar}(\mathrm{F})}), ..., \mathrm{F}(x_1, ..., a)$ are defined for all $x_1, ..., x_{\mathrm{ar}(\mathrm{F})} \in A$ and $\mathrm{F} \in \sigma$.
\end{center}

\medskip

\noindent
Let $\overline{\mathrm{supp}}(\mathcal{A}) = A \setminus \mathrm{supp}(\mathcal{A})$, the \emph{anti-support of $\mathcal{A}$}. Define

\medskip

\begin{center}
$\mathrm{rep}(\mathcal{A}^I) = \{\overline{a} \in A^I \mid a \in \overline{\mathrm{supp}}(\mathcal{A})\} \cup \{f \in A^I \mid \forall j \in I (f(j) \in \mathrm{supp}(\mathcal{A}))\}$.
\end{center}
\end{definition}

The above definitions generalize the notions of support, and its counterpart, of a structure, and of a function. $\mathrm{rep}(\mathcal{A})$ is divided following the reasoning that the elements of $\mathcal{A}^I / \mathcal{U}$ that are in its support should find representatives among the functions in the support of $\mathcal{A}^I$ (which are the functions whose supports are their whole domains), and elements not in the support of $\mathcal{A}^I / \mathcal{U}$ should find representatives among the constant functions whose images are singletons of elements that are not in the support of their domain -- intuitively, reflecting also the fact that each element not in the support of $\mathcal{A}^I / \mathcal{U}$ should be equivalent, in a sense, to some element in the anti-support of $\mathcal{A}$. In essence, the definition of $\mathrm{rep}(\mathcal{A})$ captures the intuition that the ultrapower should not multiply the elements in the anti-support of its generating structure when they are definable. The interest in the above concepts is when $\mathcal{A}$ possesses partial functions, such as the rationals with division.

\begin{proposition}[\cite{changkeisler}, p. 228]\label{proposition kreisel}
$\mathcal{U}$ is $\kappa$-complete iff for every partition of $I$ into fewer than $\kappa$ parts, one of the parts belongs to $\mathcal{U}$.
\end{proposition}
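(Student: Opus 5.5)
The plan is to prove both directions directly from the definition: $\mathcal{U}$ is $\kappa$-complete iff $\bigcap_{i<\lambda} Y_i \in \mathcal{U}$ whenever $\lambda < \kappa$ and each $Y_i \in \mathcal{U}$. Throughout I will discard empty pieces of partitions. This is harmless, since $\varnothing \notin \mathcal{U}$ and discarding pieces only lowers their number. I also note at the outset that for $\kappa \leq \omega$ both sides hold for every ultrafilter. Every filter is closed under finite intersections. And for a finite partition, if no part were in $\mathcal{U}$, then all the complements would be in $\mathcal{U}$, so their finite intersection, which is $\varnothing$, would be too, which is absurd. So the real content is for $\kappa > \omega$, although the arguments below work uniformly.

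($\Rightarrow$) Suppose $\mathcal{U}$ is $\kappa$-complete and let $\{X_i\}_{i<\lambda}$ be a partition of $I$ with $\lambda < \kappa$. Suppose, for contradiction, that no $X_i$ belongs to $\mathcal{U}$. Since $\mathcal{U}$ is an ultrafilter, every complement $I \setminus X_i$ is then in $\mathcal{U}$. By $\kappa$-completeness, $\bigcap_{i<\lambda}(I\setminus X_i) = I \setminus \bigcup_{i<\lambda} X_i = \varnothing$ lies in $\mathcal{U}$, a contradiction.

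($\Leftarrow$) Assume the partition property, and let $\{Y_i\}_{i<\lambda} \subseteq \mathcal{U}$ with $\lambda<\kappa$. Put $Y = \bigcap_{i<\lambda} Y_i$ and suppose, for contradiction, that $Y \notin \mathcal{U}$. I disjointify the complements by setting $Z_i = \big(\bigcap_{k<i} Y_k\big) \setminus Y_i$ for $i<\lambda$. Then I check that $\{Z_i\}_{i<\lambda} \cup \{Y\}$ partitions $I$.

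\begin{itemize}
\item[] \emph{Covering.} If $j \notin Y$, let $i$ be least with $j \notin Y_i$; then $j \in Z_i$.
\item[] \emph{Disjointness of the $Z_i$.} If $i<i'$, then $Z_{i'} \subseteq Y_i$ while $Z_i \cap Y_i = \varnothing$.
\item[] \emph{Disjointness from $Y$.} Each $Z_i \cap Y = \varnothing$, since $Y \subseteq Y_i$.
\end{itemize}

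Each $Z_i \subseteq I\setminus Y_i \notin \mathcal{U}$, so $Z_i \notin \mathcal{U}$, and $Y \notin \mathcal{U}$ by assumption. Hence no part belongs to $\mathcal{U}$, contradicting the partition property, provided the partition has fewer than $\kappa$ parts.

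The only delicate step is this count of parts, which is $\lambda+1$ (before discarding empty pieces). When $\lambda$ is infinite, $\lambda+1 = \lambda < \kappa$. When $\lambda$ is finite, $Y \in \mathcal{U}$ holds directly because filters are closed under finite intersections, so the contradiction argument is not needed at all. This completes the equivalence.
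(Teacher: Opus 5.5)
Your proof is correct. The paper gives no proof of its own here and simply cites Chang--Keisler; your argument is the standard proof of this fact: complements for $(\Rightarrow)$, and for $(\Leftarrow)$ disjointifying the complements of the $Y_i$, with the finite case handled separately so the partition has fewer than $\kappa$ parts.
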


\begin{theorem}\label{theorem representatives}
Every element of $\mathcal{A}^I / \mathcal{U}$ has a representative in $\mathrm{rep}(\mathcal{A}^I)$ iff $\mathcal{U}$ is $\kappa^+$-complete, where $\kappa = |\overline{\mathrm{supp}}(\mathcal{A})|$.
\end{theorem}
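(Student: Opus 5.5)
We prove both directions by applying Proposition \ref{proposition kreisel} to the partition of $I$ that a function $f \in A^I$ induces according to which anti-support element, if any, it takes at each index. Throughout, write $\kappa = |\overline{\mathrm{supp}}(\mathcal{A})|$. Note that a partition into at most $\kappa+1$ pieces has fewer than $\kappa^+$ pieces, since $\kappa + 1 < \kappa^+$ whether $\kappa$ is finite or infinite.

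($\Leftarrow$) Assume $\mathcal{U}$ is $\kappa^+$-complete and let $f \in A^I$. For each $a \in \overline{\mathrm{supp}}(\mathcal{A})$ set $X_a = \{j \in I \mid f(j) = a\}$, and set $Y = \{j \in I \mid f(j) \in \mathrm{supp}(\mathcal{A})\}$.
\begin{itemize}
\item These sets (discarding empty ones) partition $I$ into at most $\kappa+1$ pieces, so by Proposition \ref{proposition kreisel} one of them lies in $\mathcal{U}$.
\item If some $X_a \in \mathcal{U}$, then $[f]_\mathcal{U} = [\overline{a}]_\mathcal{U}$. Since $a$ is in the anti-support, $\overline{a} \in \mathrm{rep}(\mathcal{A}^I)$.
\item Otherwise $Y \in \mathcal{U}$. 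In particular $Y \neq \varnothing$, so $\mathrm{supp}(\mathcal{A}) \neq \varnothing$ and we may fix some $b \in \mathrm{supp}(\mathcal{A})$.
\item Define $g$ by $g(j) = f(j)$ for $j \in Y$ and $g(j) = b$ for $j \notin Y$. Then $g$ takes only support values, so $g \in \mathrm{rep}(\mathcal{A}^I)$, and $g$ agrees with $f$ on $Y \in \mathcal{U}$, so $[g]_\mathcal{U} = [f]_\mathcal{U}$.
\end{itemize}

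($\Rightarrow$) Assume every class has a representative in $\mathrm{rep}(\mathcal{A}^I)$. Let $\{X_i\}_{i<\lambda}$ be a partition of $I$ with $\lambda \le \kappa$; by Proposition \ref{proposition kreisel} it suffices to show some $X_i \in \mathcal{U}$.
\begin{itemize}
\item Fix an injection $i \mapsto a_i$ from $\lambda$ into $\overline{\mathrm{supp}}(\mathcal{A})$, and define $f(j) = a_i$ for $j \in X_i$.
\item Let $g \in \mathrm{rep}(\mathcal{A}^I)$ with $[g]_\mathcal{U} = [f]_\mathcal{U}$.
\item $g$ cannot take only support values: $f$ takes only anti-support values, so $f$ and $g$ would differ at every index and $\{j \mid f(j) = g(j)\} = \varnothing \notin \mathcal{U}$.
\item Hence $g = \overline{a}$ for some $a \in \overline{\mathrm{supp}}(\mathcal{A})$, and $\{j \in I \mid f(j) = a\} \in \mathcal{U}$.
\item By injectivity of $i \mapsto a_i$, this set is either empty, which is impossible, or exactly one $X_i$. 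So that $X_i \in \mathcal{U}$, as required.
\end{itemize}

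The argument is essentially bookkeeping. The points needing care are the edge cases: $\mathrm{supp}(\mathcal{A}) = \varnothing$ (handled because then $Y = \varnothing \notin \mathcal{U}$), $\kappa = 0$ (every ultrafilter is trivially $1$-complete), and the count $\kappa + 1 < \kappa^+$ in the first direction. I expect the main obstacle to be matching these edge cases, and the treatment of empty parts, with the exact formulation of Proposition \ref{proposition kreisel}.
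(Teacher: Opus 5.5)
Your proof is correct and follows the paper's own argument. For ($\Leftarrow$) you partition $I$ into $\{j \mid f(j)\in\mathrm{supp}(\mathcal{A})\}$ and the level sets of anti-support values; for ($\Rightarrow$) you code a partition into at most $\kappa$ pieces by distinct anti-support values; both directions go through Proposition \ref{proposition kreisel}.

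One small correction: $\kappa+1<\kappa^+$ is false for finite $\kappa$, where $\kappa^+=\kappa+1$, as your own treatment of $\kappa=0$ via $1$-completeness presupposes. In that case, though, the partition is finite and every ultrafilter contains a piece of any finite partition, so the step still holds; the paper passes over the same point without comment.
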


\begin{proof}
This proof is similar to the one for Proposition 4.2.4 of \cite{changkeisler}. ($\Leftarrow$) Let $f \in A^I$. If $\mathrm{supp}(f) = I$, the conclusion is straightforward, so suppose otherwise. Now, the sets $\mathrm{supp}(f)$ and $\{j \in I \mid f(j) = a\}_{a \in \overline{\mathrm{supp}}(\mathcal{A})}$ partition $I$. Since $\mathcal{U}$ is $\kappa^+$-complete for $\kappa = |\overline{\mathrm{supp}}(\mathcal{A})|$, by Proposition \ref{proposition kreisel} exactly one of those sets is in $\mathcal{U}$. But then either $[f] = [\overline{b}]$ for some $b \in \overline{\mathrm{supp}}(\mathcal{A})$, or $\mathrm{supp}(f) \in \mathcal{U}$, so consider $g \in A^I$ such that for some $c \in \mathrm{supp}(\mathcal{A})$,

\begin{center}
$g(j) = \begin{cases} f(j),\ \text{if}\ j \in \mathrm{supp}(\mathcal{A})\\
c,\ \text{otherwise}
\end{cases}$ 
\end{center}

\noindent
so that $[g] = [f]$. Either way, we have the conclusion, since $\overline{b}, g \in \mathrm{rep}(\mathcal{A}^I)$.

($\Rightarrow$) Suppose every $[f] \in \mathcal{A}^I / \mathcal{U}$ has a representative in $\mathrm{rep}(\mathcal{A}^I)$. Let $\bigcup_{\alpha < \xi} X_\alpha$ be a partition of $A$, for $\xi \leq \kappa = |\overline{\mathrm{supp}}(\mathcal{A})|$. Let $\langle a_\alpha \rangle_{\alpha < \kappa}$ be an enumeration of $\overline{\mathrm{supp}}(\mathcal{A})$ and $f \in A^I$ be such that $f(j) = a_\zeta$ iff $j \in X_\zeta$. By $f$'s construction, $[f] \neq [b]$ for any $b \in \mathrm{supp}(\mathcal{A})$, so by assumption we must have $[f] = [\overline{c}]$ for some $c \in \overline{\mathrm{supp}}(\mathcal{A})$. But by construction, $c = a_\zeta$ for some $\zeta < \xi$, which means $\{j \in I \mid f(j) = c\} = \{j \in I \mid f(j) = a_\zeta\} = X_\zeta \in \mathcal{U}$. By the arbitrariness of the partition and Proposition \ref{proposition kreisel}, $\mathcal{U}$ is $\kappa^+$-complete.
\end{proof}

Every ultrafilter is finitely complete, which justifies the set $\mathrm{rep}(\mathcal{A})$ always containing some representative of every equivalence class of $\mathcal{A}^I / \mathcal{U}$ when $|\overline{\mathrm{supp}} \big{(} \mathcal{A}) \big{)}| < \aleph_0$. However, as it is well known, the existence of $\kappa$-complete ultrafilters, for $\kappa > \aleph_0$, is independent of ZFC, which means the existence of such a fragment in $\mathrm{rep}(\mathcal{A})$, within ZFC, is only guaranteed for structures with $|\overline{\mathrm{supp}}(\mathcal{A})| < \aleph_0$:

\begin{corollary}
Let $|\overline{\mathrm{supp}}(\mathcal{A})| \geq \aleph_0$. Then, there is a non-principal $\mathcal{U}$ over $I$ such that $\mathrm{rep}(\mathcal{A}^I)$ contains a representative of every equivalence class of $\mathcal{A}^I / \mathcal{U}$ iff there is a measurable cardinal $\kappa > |\overline{\mathrm{supp}}(\mathcal{A})|$.
\end{corollary}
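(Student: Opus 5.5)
We plan to derive the corollary directly from Theorem \ref{theorem representatives}, combined with the standard facts about measurable cardinals and complete ultrafilters. Let $\lambda = |\overline{\mathrm{supp}}(\mathcal{A})| \geq \aleph_0$. By Theorem \ref{theorem representatives}, the existence of a non-principal $\mathcal{U}$ over $I$ such that $\mathrm{rep}(\mathcal{A}^I)$ contains a representative of every class of $\mathcal{A}^I / \mathcal{U}$ is equivalent to the existence of a non-principal $\lambda^+$-complete ultrafilter over $I$. So the statement reduces to a purely set-theoretic one: there is a non-principal $\lambda^+$-complete ultrafilter (over a suitable $I$) iff there is a measurable cardinal $\kappa > \lambda$. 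Here we read $I$ as ranging over sets of sufficiently large size, as the statement implicitly requires; for a fixed small $I$ the right-hand side must be relativised to $\kappa \leq |I|$.

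For ($\Rightarrow$), let $\mathcal{U}$ be non-principal and $\lambda^+$-complete. Let $\kappa$ be the completeness number of $\mathcal{U}$, i.e. the least cardinal such that some family of $\kappa$ sets in $\mathcal{U}$ has intersection outside $\mathcal{U}$. Such a family exists because $\mathcal{U}$ is non-principal: the sets $I \setminus \{i\}$, $i \in I$, all lie in $\mathcal{U}$ but have empty intersection. Hence $\kappa \leq |I|$, and $\kappa > \lambda$ since $\kappa \geq \lambda^+$.

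The standard argument then shows that $\kappa$ is measurable. Take a partition $\{X_\alpha\}_{\alpha<\kappa}$ of $I$ with no part in $\mathcal{U}$; it exists by Proposition \ref{proposition kreisel} applied at $\kappa^+$. Define $g : I \to \kappa$ by $g(j) = \alpha$ iff $j \in X_\alpha$, and push $\mathcal{U}$ forward to $D = \{Y \subseteq \kappa \mid g^{-1}[Y] \in \mathcal{U}\}$. Then $D$ is an ultrafilter over $\kappa$, and it is $\kappa$-complete because preimages commute with intersections. It is also non-principal, since $g^{-1}[\{\alpha\}] = X_\alpha \notin \mathcal{U}$. Therefore $\kappa$ is measurable and $\kappa > \lambda$.

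For ($\Leftarrow$), let $\kappa > \lambda$ be measurable with a non-principal $\kappa$-complete ultrafilter $D$ over $\kappa$. Since $\lambda^+ \leq \kappa$, $D$ is $\lambda^+$-complete. Taking $I = \kappa$ and $\mathcal{U} = D$, Theorem \ref{theorem representatives} gives the conclusion. For a larger $I$, we fix an injection $\kappa \hookrightarrow I$ and transport $D$ as the ultrafilter generated by the image sets; completeness and non-principality are preserved.

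The main obstacle is the ($\Rightarrow$) step: extracting a measurable cardinal, rather than just some complete ultrafilter, requires passing to the completeness number and verifying non-principality of the pushforward. The rest is routine bookkeeping. We also expect to flag the implicit assumption on $|I|$ in the statement.
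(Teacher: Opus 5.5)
Your proposal is correct and follows the paper's implicit route. The paper gives no written proof and simply combines Theorem \ref{theorem representatives} with the well-known equivalence between non-principal $\lambda^+$-complete ultrafilters and measurable cardinals above $\lambda$; you spell that equivalence out via the completeness number and the pushforward. Your caveat about $I$ is also right: for a fixed $I$ the correct right-hand side is a measurable $\kappa$ with $\lambda < \kappa \leq |I|$ (for instance, no non-principal ultrafilter over $\omega$ is $\aleph_1$-complete), and the paper's statement leaves this implicit.
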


Nevertheless, the result also implies, within ZFC, that if $\mathcal{A}$ is a field (and, in the presence of additional operations $\{ \mathrm{F}_i \}_{i \leq \kappa}$, as long as $|A^{\mathrm{ar}(\mathrm{F_i})} \setminus \mathrm{dom}(\mathrm{F}_i)| < \aleph_0$), then $\mathrm{rep}(\mathcal{A})$ does contain a representative of each element of $\mathcal{A}^I / \mathcal{U}$.

\vspace{5mm}

For a $\sigma$-structure $\mathcal{A}$ and $a \in A$, let $\llbracket a \rrbracket$ be the \emph{connected closure set of $a$} -- that is, the set of all the elements in tuples from which $a$ may be obtained by means of an operation, or containing $a$ for which some function is defined, or containing $a$ for which some relation holds; the elements in tuples from which some element $b$ related in that way to $a$ may be obtained by means of an operation, or containing $b$ for which some function is defined, or containing $b$ for which some relation holds, and so on. That is, it may be defined, given an operation $\mathrm{cc} : A \to \mathcal{P}(A)$ such that $\mathrm{cc}(a) = \{b \mid \exists c_1, ..., c_{\mathrm{F}-1} (\mathrm{F}(c_1, ..., b, ..., c_{\mathrm{F}-1}) = a\ \text{or}\ \mathrm{F}(c_1, ..., a, ..., c_{\mathrm{F}-1}) = b)\ \text{or}\ \exists c_1, ..., c_{\mathrm{R}-2}(\mathrm{R} c_1 ... a ... b ... c_{\mathrm{R}-2})\}$, as the closure of $\{a\}$ under $\mathrm{cc}$.\footnote{That is, $\llbracket a \rrbracket = \bigcup_{n < \omega} \mathrm{cc}^n(\{a\})$, where $\mathrm{cc}^0(\{a\}) = \{a\}$ and $\mathrm{cc}^{n+1}(\{a\}) = \mathrm{cc}(\mathrm{cc}^n(\{a\}))$.} Let also $\tau_a$ be the cardinality of the set of all the tuples from which $a$ may be obtained by means of an operation, or containing $a$ for which some function is defined, or containing $a$ for which some relation holds. That is, for

\vspace{2mm}

\begin{tabular}{lll}
$S^{a \downarrow}_\mathrm{F}$ & $=$ & $\{\langle b_1, ..., b_{\mathrm{ar}(\mathrm{F})} \rangle \in A^{\mathrm{ar}(\mathrm{F})} \mid \mathrm{F}^\mathcal{A}(b_1, ..., b_{\mathrm{ar}(\mathrm{F})}) = a\}$,\\
$S^{a \uparrow}_\mathrm{F}$ & $=$ & $\{\langle b_1, ..., b_{\mathrm{ar}(\mathrm{F})} \rangle \in A^{\mathrm{ar}(\mathrm{F})} \mid \mathrm{F}^\mathcal{A}(b_1, ..., a, ..., b_{\mathrm{ar}(\mathrm{F})-1}) = b_{\mathrm{ar}(\mathrm{F})}\}$, and\\
$S^a_\mathrm{R}$ & $=$ & $\{\langle b_1, ..., b_{\mathrm{ar}(\mathrm{R})-1} \rangle \in A^{\mathrm{ar}(\mathrm{R})-1} \mid \mathrm{R}^\mathcal{A} b_1 ... a ... b_{\mathrm{ar}(\mathrm{R})-1}\}$,
\end{tabular}

\vspace{2mm}

\noindent
we let $\tau_a = \big{|} \bigsqcup_{\mathrm{F} \in \sigma} (S^{a \downarrow}_\mathrm{F} \sqcup S^{a \uparrow}_\mathrm{F}) \sqcup  \bigsqcup_{\mathrm{R} \in \sigma} S^a_\mathrm{R} \big{|} = \sum_{\mathrm{F} \in \sigma} (|S^{a \downarrow}_\mathrm{F}| + |S^{a \uparrow}_\mathrm{F}|) + \sum_{\mathrm{R} \in \sigma} |S^a_\mathrm{R}|$. Let then $\tau_{\llbracket a \rrbracket} = \sum_{b \in \llbracket a \rrbracket} \tau_b$, and define $\tau_\mathcal{A} = \mathrm{sup}\{\tau_{\llbracket a \rrbracket}\}_{a \in A}$.

\begin{proposition}\label{proposition connected closure}
For $a, b \in \mathcal{A}$, if $\llbracket a \rrbracket \neq \llbracket b \rrbracket$, then $\llbracket a \rrbracket \cap \llbracket b \rrbracket = \varnothing$.
\end{proposition}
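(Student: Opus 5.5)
The plan is to show that the relation $b \sim c$ iff $b \in \llbracket c \rrbracket$ is an equivalence relation whose classes are exactly the sets $\llbracket a \rrbracket$. The statement then follows from the usual fact that distinct equivalence classes are disjoint. Throughout, I read $\mathrm{cc}$ as extended to sets pointwise, as the footnote's notation $\mathrm{cc}(\mathrm{cc}^n(\{a\}))$ suggests: $\mathrm{cc}(X) = \bigcup_{x \in X} \mathrm{cc}(x)$. I also count $x$ as lying in its own closure, via $\mathrm{cc}^0$.

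\textbf{Symmetry.} The first step is to check that $b \in \mathrm{cc}(a)$ implies $a \in \mathrm{cc}(b)$. This is read off the definition of $\mathrm{cc}$, which is symmetric by construction.

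The operational clause has two disjuncts:
\begin{itemize}
\item[] $b$ appears in an argument tuple $\langle c_1, \dots, b, \dots \rangle$ with $\mathrm{F}(\dots) = a$;
\item[] $a$ appears in an argument tuple with $\mathrm{F}(\dots) = b$.
\end{itemize}
Swapping the roles of $a$ and $b$ exchanges these two disjuncts. The relational clause has $a$ and $b$ occurring together in a tuple satisfying $\mathrm{R}$, which is already symmetric in $a, b$.

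From one-step symmetry, induction on $n$ gives: if $b \in \mathrm{cc}^n(\{a\})$, then there is a chain $a = x_0, x_1, \dots, x_n = b$ with $x_{i+1} \in \mathrm{cc}(x_i)$. Reversing the chain shows $a \in \mathrm{cc}^n(\{b\})$.

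\textbf{Transitivity.} Next I show that $b \in \llbracket a \rrbracket$ implies $\llbracket b \rrbracket \subseteq \llbracket a \rrbracket$. If $b \in \mathrm{cc}^n(\{a\})$ and $c \in \mathrm{cc}^m(\{b\})$, then concatenating the chains gives $c \in \mathrm{cc}^{n+m}(\{a\})$. By symmetry we also get $a \in \llbracket b \rrbracket$, hence $\llbracket a \rrbracket \subseteq \llbracket b \rrbracket$. So
\[
b \in \llbracket a \rrbracket \ \Longrightarrow\ \llbracket b \rrbracket = \llbracket a \rrbracket.
\]

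\textbf{Conclusion.} Suppose $c \in \llbracket a \rrbracket \cap \llbracket b \rrbracket$. The previous step gives $\llbracket c \rrbracket = \llbracket a \rrbracket$ and $\llbracket c \rrbracket = \llbracket b \rrbracket$, so $\llbracket a \rrbracket = \llbracket b \rrbracket$. Taking the contrapositive yields the proposition.

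The only delicate point is the symmetry of $\mathrm{cc}$. In particular, one must make sure the "$\mathrm{F}$ is defined with $a$" clause is read as relating $a$ to the output $b$, and that arity-$0$ or arity-$1$ symbols cause no asymmetry. The rest is routine chain manipulation.
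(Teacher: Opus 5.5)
Your proof is correct. It is exactly the routine verification the paper compresses into ``Straightforward'': the two functional disjuncts of $\mathrm{cc}$ swap under $a \leftrightarrow b$ and the relational clause is unordered, so $\llbracket a \rrbracket$ is the connected component of $a$ under a symmetric relation, and distinct components are disjoint. You are right that symmetry is the only real content; under an order-sensitive reading of $\mathrm{R}\, c_1 \dots a \dots b \dots c_{\mathrm{R}-2}$ the statement would fail (take $\mathrm{R} = \{(a,c),(b,c)\}$), so your reading via the paper's ``occurs in some position'' convention is the one that makes the proposition true.
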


\begin{proof}
Straightforward.
\end{proof}

Recall that, for a cardinal $\kappa$, a family of sets $X$ has the $\kappa$-intersection property if any subset $Y \subseteq X$ with $|Y| < \kappa$ is such that $\bigcap Y \neq \varnothing$.

\begin{theorem}\label{theorem representatives embedding}
If $\mathcal{U}$ has the $\tau_{\mathcal{A}^I / \mathcal{U}}^+$-intersection property, then there is $e : \mathcal{A}^I / \mathcal{U} \hookrightarrow \mathcal{A}^I$.
\end{theorem}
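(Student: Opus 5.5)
We propose building $e$ one connected component at a time. Let $\mathcal{B} = \mathcal{A}^I/\mathcal{U}$. By Proposition \ref{proposition connected closure}, the sets $\llbracket x \rrbracket$ for $x \in B$ partition $B$. Every tuple involved in an operation or relation instance lies entirely inside one such set. So an embedding of $\mathcal{B}$ into $\mathcal{A}^I$ can be assembled from maps defined separately on each $\llbracket x \rrbracket$, provided two conditions hold. First, the images of distinct components must be disjoint. Second, each local map must preserve and reflect all the structure that lives inside its component. Constants need some care: each constant is its own element, and its component is handled like any other.

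The key step is the local choice of representatives. Fix a component $C = \llbracket x \rrbracket$. The relevant facts about $C$ are the following.
\begin{itemize}
\item[(i)] The instances $\mathrm{F}^{\mathcal{B}}(\vec b) = c$ with entries in $C$.
\item[(ii)] The tuples of $C$ on which each $\mathrm{F}^{\mathcal{B}}$ is defined.
\item[(iii)] The instances $\mathrm{R}^{\mathcal{B}}\vec b$ with entries in $C$.
\item[(iv)] The inequalities $b \neq c$ for $b, c \in C$.
\end{itemize}
By definition of $\tau_{\llbracket x \rrbracket}$, the number of positive facts of types (i)--(iii) is at most $\tau_{\llbracket x \rrbracket} \le \tau := \tau_{\mathcal{B}}$.

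Each positive fact $\varphi$ about classes $[f_0],\dots,[f_n]$ holds on a set $Z_\varphi \in \mathcal{U}$ of coordinates. Each inequality $[f] \neq [g]$ holds on a set in $\mathcal{U}$. Each equality $\mathrm{F}(\vec b)=c$ gives a set $\{j : \mathrm{F}^{\mathcal{A}}(\vec f(j)) = g(j)\} \in \mathcal{U}$. The intersection property (for fewer than $\tau^+$ sets, i.e.\ at most $\tau$) supplies a single coordinate $j_C$ in the intersection of all the positive-fact sets for $C$. We would then redefine the chosen representatives of all $b \in C$ so that each fact holds at every coordinate, not merely on a $\mathcal{U}$-large set. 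Concretely, we replace each $f_b$ by $g_b$, where $g_b(j) = f_b(j)$ on the intersection set and $g_b(j) = f_b(j_C)$ elsewhere. This is the same trick as in the ($\Leftarrow$) direction of Theorem \ref{theorem ultrapowers embedding 2}. It makes every positive atomic fact about $C$ true coordinatewise in $\mathcal{A}^I$, so operations and relations are preserved. We do not change the classes of elements of $C$, and no new relation instances appear in $\mathcal{B}$.

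For injectivity, note that distinct classes have distinct representatives automatically, since the $g_b$ remain in distinct classes and hence are distinct functions. The difficulty is reflection. We need that $\mathrm{R}^{\mathcal{A}^I}(\vec g)$ implies $\mathrm{R}^{\mathcal{B}}$ on the classes, and that $\mathrm{F}^{\mathcal{A}^I}(\vec g)$ defined with value $g_c$ implies the corresponding fact in $\mathcal{B}$. Both implications are immediate, because coordinatewise truth gives truth on $I \in \mathcal{U}$ (Łoś for atomic formulas). Hence reflection holds within components.

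Across components, no positive fact in $\mathcal{A}^I$ may link tuples from different components; otherwise the image would not be closed correctly and reflection would fail. Here is where we expect the main obstacle. The obvious remedy is to choose the default coordinates $j_C$ differently, or more generally to arrange the modified representatives so that cross-component values collide nowhere. However, the intersection property alone does not obviously prevent, say, $\mathrm{F}^{\mathcal{A}^I}(g_b, g_{b'})$ from equalling some $g_c$ with $b, c$ in different components. To handle this, we first try the following: any such coordinatewise coincidence would force the corresponding fact to hold in $\mathcal{B}$ by Łoś. That would put $b$ and $c$ in the same component, contradicting the choice. So we expect the cross-component issue to reduce to the within-component reflection argument, with the intersection property the only cardinality input.
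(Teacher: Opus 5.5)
Your proposal is correct and is essentially the paper's proof. For each component $\llbracket a \rrbracket$ the paper likewise picks a coordinate $k$ in the intersection $X^{\llbracket a \rrbracket}$ of the at most $\tau_{\mathcal{A}^I/\mathcal{U}}$ witnessing sets, and patches the representatives by $f_c(j)=\theta(c)(j)$ on $X^{\llbracket a \rrbracket}$ and $f_c(j)=\theta(c)(k)$ elsewhere; it runs this through a recursion over an enumeration of $B$ rather than component by component, and your \L o\'{s} argument for injectivity and reflection (including across components) spells out what the paper leaves as ``clear''.

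Two points should be made explicit. First, ``we do not change the classes'' needs the intersection to lie in $\mathcal{U}$, not merely to be nonempty. This holds because, for an ultrafilter, the $\tau^+$-intersection property gives $\bigcap Y \in \mathcal{U}$ whenever $Y \subseteq \mathcal{U}$ and $|Y| \le \tau$: otherwise, adjoining $I \setminus \bigcap Y$ yields at most $\tau$ members of $\mathcal{U}$ with empty intersection, and finite $Y$ is trivial. Second, take $\overline{\mathrm{c}^{\mathcal{A}}}$ as the representative of each constant; the patching leaves it unchanged, so $e(\mathrm{c}^{\mathcal{A}^I/\mathcal{U}}) = \overline{\mathrm{c}^{\mathcal{A}}}$.
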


\begin{proof}
Call $\mathcal{B} = \mathcal{A}^I / \mathcal{U}$ and let $\theta: B \to A^I$ be a choice function on the equivalence classes. Let $\langle a_\alpha \rangle_{\alpha < |B|}$ be an enumeration of the elements of $\mathcal{B}$. We inductively define the following relation. Let $e_0 = \varnothing$. Then, for each $a \in B$, for $e_{< \beta} = \bigcup_{\alpha < \beta} e_\alpha$ and each $c \in \llbracket a \rrbracket \cup \{a\}$, let

\begin{center}
$c^\dagger = \begin{cases}
e_{< \beta}(c),\ \text{if}\ a \in \mathrm{dom}(e_{< \beta})\\
\theta(c)\ \text{otherwise}
\end{cases}$
\end{center}

\noindent
In that way, for each $\langle b_1, ..., b_{\mathrm{ar}(\mathrm{F})} \rangle \in S^{c \downarrow}_\mathrm{F}$,

\begin{center}
$\big{\{} j \in I \mid \mathrm{F}^\mathcal{A} \big{(} b_1^\dagger(j), ..., b^\dagger_{\mathrm{ar}(\mathrm{F})}(j) \big{)} = c^\dagger(j) \big{\}} \in \mathcal{U}$;
\end{center}

\noindent
similarly, for each $\langle b_1, ..., b_{\mathrm{ar}(\mathrm{F})} \rangle \in S^{c \uparrow}_\mathrm{F}$,

\begin{center}
$\big{\{} j \in I \mid \mathrm{F}^\mathcal{A} \big{(} b_1^\dagger(j), ..., c^\dagger(j), ..., b_{\mathrm{ar}(\mathrm{F})-1}^\dagger(j) \big{)} = b_{\mathrm{ar}(\mathrm{F})}^\dagger(j) \big{\}} \in \mathcal{U}$,
\end{center}

\noindent
and for each $\langle b_1, ..., b_{\mathrm{ar}(\mathrm{R}-1)} \rangle \in S^c_\mathrm{R}$,

\begin{center}
$\big{\{} j \in I \mid \mathrm{R}^\mathcal{A} b_1^\dagger(j) ... c^\dagger(j) ... b_{\mathrm{ar}(\mathrm{F})-1}^\dagger(j) \big{\}} \in \mathcal{U}$.
\end{center}

\noindent
Since and $\mathcal{U}$ has the $\tau^+_\mathcal{B}$-intersection property and $\tau_{\llbracket a \rrbracket} \leq \tau_\mathcal{B}$,

\begin{center}
$\bigcap_{c \in \llbracket a \rrbracket} \Big{(} \bigcap_{\mathrm{F} \in \sigma} \big{(} \bigcap_{\langle b_1, ..., b_{\mathrm{ar}(\mathrm{F})} \rangle \in S_\mathrm{F}^{c \downarrow} } \big{\{} j \in I \mid \mathrm{F}^\mathcal{A} \big{(} b_1^\dagger(j), ..., b^\dagger_{\mathrm{ar}(\mathrm{F})}(j) \big{)} = c^\dagger(j) \big{\}}\ \cap\ \linebreak \bigcap_{\langle b_1, ..., b_{\mathrm{ar}(\mathrm{F})} \rangle \in S_\mathrm{F}^{c \uparrow}} \big{\{} j \in I \mid \mathrm{F}^\mathcal{A} \big{(} b_1^\dagger(j), ..., c^\dagger(j), ..., b_{\mathrm{ar}(\mathrm{F})-1}^\dagger(j) \big{)} = b_{\mathrm{ar}(\mathrm{F})}^\dagger(j) \big{\}} \big{)} \cap \linebreak \bigcap_{\langle b_1, ..., b_{\mathrm{ar}(\mathrm{F})-1} \rangle \in S_\mathrm{R}^c} \big{\{} j \in I \mid \mathrm{R}^\mathcal{A} b_1^\dagger(j) ... c^\dagger(j) ... b_{\mathrm{ar}(\mathrm{R})-1}^\dagger(j) \big{\}} \Big{)} \neq \varnothing$.
\end{center}

\noindent
Call the above set $X^{\llbracket a \rrbracket}$ and let $k \in X^{\rrbracket a \llbracket}$. Define then $f_c$ such that for each $j \in I$,

\begin{center}
$f_c(j) = \begin{cases}
\theta(c)(j),\ \text{if}\ j \in X^{\rrbracket a \llbracket}\\
\theta(c)(k),\ \text{otherwise}
\end{cases}$
\end{center}

\noindent
Then, we let $e_\beta = \bigcup_{\alpha < \beta} e_\alpha \cup \{ \langle c, f_c \rangle \mid c \in (\llbracket a \rrbracket \cup \{a\})\}$. Notice, by Proposition \ref{proposition connected closure}, $e_\beta$ is a function.

From the above construction, it is clear that $e = \bigcup_{\alpha < |B|} e_\alpha$ is a function, that it preserves the constants, functions and relations of $\sigma$, and that for any $a \in B$, $[e(a)] = a$.
\end{proof}

\begin{corollary}\label{corollary representatives embedding}
If $\mathcal{U}$ is $\tau_{\mathcal{A}^I / \mathcal{U}}^+$-complete, then there is a choice function $e$ on the equivalence classes of $\mathcal{A}^I / \mathcal{U}$ such that $e : \mathcal{A}^I / \mathcal{U} \hookrightarrow \mathcal{A}^I$.
\end{corollary}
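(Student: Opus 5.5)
The plan is to deduce the corollary from Theorem \ref{theorem representatives embedding}, and then to check that the map it builds really is a choice function on the equivalence classes.

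First, completeness implies the required intersection property. Write $\tau = \tau_{\mathcal{A}^I / \mathcal{U}}$ and suppose $\mathcal{U}$ is $\tau^+$-complete. Then any subfamily $Y \subseteq \mathcal{U}$ with $|Y| < \tau^+$ has $\bigcap Y \in \mathcal{U}$. Since $\varnothing \notin \mathcal{U}$, this gives $\bigcap Y \neq \varnothing$, so $\mathcal{U}$ has the $\tau^+$-intersection property. Theorem \ref{theorem representatives embedding} then gives an embedding $e : \mathcal{A}^I / \mathcal{U} \hookrightarrow \mathcal{A}^I$.

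Second, $e$ is a choice function. The proof of Theorem \ref{theorem representatives embedding} ends by noting that $[e(a)] = a$ for every $a \in B$. So $e(a)$ lies in the class $a$, which is exactly what a choice function on the equivalence classes of $A^I/\mathcal{U}$ requires. I will re-verify this from the construction. For each $c \in \llbracket a \rrbracket \cup \{a\}$, the function $f_c$ agrees with $\theta(c)$ on $X^{\llbracket a \rrbracket}$. Under completeness, $X^{\llbracket a \rrbracket}$ is an intersection of at most $\tau$ members of $\mathcal{U}$, so it lies in $\mathcal{U}$, and hence $[f_c] = [\theta(c)] = c$.

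The main obstacle is this last point. Under the mere intersection property, $X^{\llbracket a \rrbracket}$ is only nonempty and need not belong to $\mathcal{U}$, so the identity $[f_c] = c$ is not automatic. Under completeness it does belong to $\mathcal{U}$, which is why the corollary can assert the stronger "choice function" conclusion. If a fully self-contained argument is wanted, I would redo the transfinite construction with $X^{\llbracket a \rrbracket} \in \mathcal{U}$ in hand. Proposition \ref{proposition connected closure} keeps the successive $e_\beta$ compatible, and the defining sets for operations and relations hold on all of $X^{\llbracket a \rrbracket}$ by construction. Redefining $f_c$ outside $X^{\llbracket a \rrbracket}$ by its value at the fixed $k$ makes those operations and relations hold pointwise everywhere, so they are preserved in the direct power.
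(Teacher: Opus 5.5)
Your proposal is correct and follows the paper's own proof, which likewise invokes Theorem~\ref{theorem representatives embedding} and notes that $\tau^+_{\mathcal{A}^I/\mathcal{U}}$-completeness gives $[f_c]=c$ for each element, so the constructed $e$ is a choice function. One aside in your last paragraph is wrong, though it does not affect your argument: for an ultrafilter, the $\tau^+$-intersection property already forces $X^{\llbracket a \rrbracket}\in\mathcal{U}$, so the two hypotheses coincide. Indeed, if an intersection of at most $\tau$ members of $\mathcal{U}$ were not in $\mathcal{U}$, then for infinite $\tau$ adjoining its complement gives a subfamily of size $<\tau^+$ with empty intersection, and for finite $\tau$ finite intersections lie in $\mathcal{U}$ anyway.
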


\begin{proof}
By Theorem \ref{theorem representatives embedding}, noting that if $\mathcal{U}$ is $\tau_{\mathcal{A}^I / \mathcal{U}}^+$-complete, then for each $a \in \mathcal{A}^I / \mathcal{U}$, $[f_a] = a$.
\end{proof}

We note that the converse is not in general true. Let $\mathcal{A}$ be an $\omega$-dimensional vector space over $\mathbb{Q}$ with signature $\sigma = \{+\} \cup \{\mathrm{F}_q \mid q \in \mathbb{Q}\}$, where $+$ is vector addition and $\mathrm{F}_q$ is a unary function for scalar multiplication by $q$. For any $a \in \mathcal{A}$, $|S^{a \downarrow}_+| = |A|$, and $|S^{a \downarrow}_{\mathrm{F}_q}| = |A|$ if $a = \overline{0}$, and $|S^{a \downarrow}_{\mathrm{F}_q}| = 1$ if $a \neq 0$ and $q \neq 0$. In that case, we may see $\tau_\mathcal{A} = |A|$, and $\tau_{\mathcal{A}^I / \mathcal{U}} = |\mathcal{A}^I / \mathcal{U}|$. Let now $I = \omega$ and $\mathcal{U}$ be non-principal. Then the dimensions of $\mathcal{A}^\omega$ and $\mathcal{A}^\omega / \mathcal{U}$ are the same, so $\mathcal{A}^\omega \cong \mathcal{A}^\omega / \mathcal{U}$ as vector spaces. However, $|\mathcal{A}^\omega / \mathcal{U}| \geq \aleph_0$, so $\tau^+_{\mathcal{A}^\omega / \mathcal{U}} \geq \aleph_1$. But since $\mathcal{U}$ is a non-principal ultrafilter over $\omega$, it cannot have the $\aleph_1$-intersection property.

\begin{proposition}\label{proposition tau value}
Let $\mathcal{A}$ be a $\sigma$-structure whose relations are empty. Suppose $\tau_\mathcal{A}$ is not a limit cardinal, let $\mathrm{m} \in A$ be the element such that $\tau_\mathcal{A} = \tau_{\llbracket \mathrm{m} \rrbracket}$, and suppose for any $n, n' \in \llbracket m \rrbracket$ and $\mathrm{F} \in \sigma$, $|S^{n \uparrow}_\mathrm{F}| = |S^{n' \uparrow}_\mathrm{F}|$ and $|S^{n \downarrow}_\mathrm{F}| = |S^{n' \downarrow}_\mathrm{F}|$. For $Z \in \mathcal{U}$ such that $|Z| = \mathrm{min}\{|X|\}_{X \in \mathcal{U}}$:

\begin{itemize}
\item[\emph{(a)}] $\tau_{\mathcal{A}^I}\ =\ \sum_{n \in \llbracket m \rrbracket} \sum_{\mathrm{F} \in \sigma} \big{(} |S^{\mathrm{n} \downarrow}_\mathrm{F}|^{|I|} + |S^{\mathrm{n} \uparrow}_\mathrm{F}|^{|I|} \big{)}\ \leq\ |\llbracket m \rrbracket|  \times (\tau_\mathcal{A})^{|I|}$;
\item[\emph{(b)}] $\tau_{\mathcal{A}^I / \mathcal{U}}\ \leq\ \mathrm{max} \big{\{} \sum_{b \in \llbracket a \rrbracket} \sum_{\mathrm{F} \in \sigma} \big{(} \prod_{j \in Z} |S^{b(j) \downarrow}_\mathrm{F}| + \prod_{j \in Z} |S^{b(j) \uparrow}_\mathrm{F}| \big{)} \big{\}}_{a \in A^I}\ \leq\ |\llbracket m \rrbracket| \times (\tau_\mathcal{A})^{|Z|}$.
\end{itemize}
\end{proposition}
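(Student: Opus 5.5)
The plan is to compute the sets $S^{f\downarrow}_\mathrm{F}$ and $S^{f\uparrow}_\mathrm{F}$ coordinatewise in the direct power, and then pass to the ultrapower by restricting to a set $Z\in\mathcal{U}$ of minimal size. Since all relations are empty, the $S_\mathrm{R}$ terms vanish and $\tau_b$ reduces to $\sum_{\mathrm{F}\in\sigma}(|S^{b\downarrow}_\mathrm{F}|+|S^{b\uparrow}_\mathrm{F}|)$.

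\textbf{Part (a).} Operations in $\mathcal{A}^I$ are computed pointwise, so for $f\in A^I$ we have $\mathrm{F}^{\mathcal{A}^I}(g_1,\dots,g_n)=f$ iff $\mathrm{F}^{\mathcal{A}}(g_1(j),\dots,g_n(j))=f(j)$ for every $j\in I$. This gives a bijection
\[ S^{f\downarrow}_\mathrm{F}\cong\prod_{j\in I}S^{f(j)\downarrow}_\mathrm{F}, \]
and the same holds for $\uparrow$. Similarly, $g\in\llbracket f\rrbracket$ in $\mathcal{A}^I$ forces $g(j)\in\llbracket f(j)\rrbracket$ for each $j$.

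Now take $f=\overline{m}$, or any $f$ with values in $\llbracket m\rrbracket$. By the uniformity hypothesis, $|S^{n\downarrow}_\mathrm{F}|$ and $|S^{n\uparrow}_\mathrm{F}|$ are constant for $n\in\llbracket m\rrbracket$, so each product equals $|S^{n\downarrow}_\mathrm{F}|^{|I|}$, respectively $|S^{n\uparrow}_\mathrm{F}|^{|I|}$. Summing over the connected closure yields the displayed expression. Maximality of $\tau_{\llbracket m\rrbracket}=\tau_\mathcal{A}$ is what shows that this $f$ attains the supremum defining $\tau_{\mathcal{A}^I}$. Here the hypothesis that $\tau_\mathcal{A}$ is a successor cardinal justifies that the supremum is attained by a single $m$.

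For the inequality, each summand satisfies $|S^{n\downarrow}_\mathrm{F}|+|S^{n\uparrow}_\mathrm{F}|\le\tau_\mathcal{A}$, so it is at most $(\tau_\mathcal{A})^{|I|}$. I will index the closure by $\llbracket m\rrbracket$, reading the outer sum as the size of the closure times the bound. Absorbing the sum over $\sigma$ into $(\tau_\mathcal{A})^{|I|}$ uses $|\sigma|\le\tau_\mathcal{A}$ whenever the summands are nonzero.

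\textbf{Part (b).} Let $a=[f]_\mathcal{U}$. Any tuple $\langle[g_1],\dots,[g_n]\rangle\in S^{a\downarrow}_\mathrm{F}$ can be represented by functions $g_i$ satisfying $\mathrm{F}(g_1(j),\dots,g_n(j))=f(j)$ on a set in $\mathcal{U}$. Two such tuples are identified when they agree on a set in $\mathcal{U}$. Restricting to $Z$ (after intersecting with a $\mathcal{U}$-large set, which still has size $\geq|Z|$ by minimality), I obtain a surjection onto $S^{a\downarrow}_\mathrm{F}$ from a set of size at most $\prod_{j\in Z}|S^{f(j)\downarrow}_\mathrm{F}|$. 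This gives the first inequality, once we also note that elements of $\llbracket a\rrbracket$ are represented by functions whose values lie coordinatewise in the corresponding closures. The second inequality then follows as in (a), using the uniformity hypothesis on $\llbracket m\rrbracket$.

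\textbf{Main obstacle.} I expect the delicate step to be the surjection in (b). Representatives are only determined modulo $\mathcal{U}$, and a witnessing tuple may satisfy the equation only on some $X\in\mathcal{U}$ that does not contain $Z$. I would handle this by fixing, for each tuple, representatives modified outside $Z$ (as in the proof of Theorem \ref{theorem ultrapowers embedding 2}). I would then bound the count by the union over $X\in\mathcal{U}$ with $X\subseteq Z$, noting that restriction to $Z$ already separates distinct classes because $Z\in\mathcal{U}$.
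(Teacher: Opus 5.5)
Your outline is essentially the paper's own argument: the coordinatewise formula $|S^{f\downarrow}_\mathrm{F}|=\prod_{j\in I}|S^{f(j)\downarrow}_\mathrm{F}|$ in $\mathcal{A}^I$, then restriction to a minimal $Z\in\mathcal{U}$. It also inherits the paper's gap at the step that carries (a). You assert two things: that maximality of $\tau_{\llbracket m\rrbracket}$ in $\mathcal{A}$ makes $\overline{m}$ attain the supremum defining $\tau_{\mathcal{A}^I}$, and that summing over the closure of $\overline{m}$ in $\mathcal{A}^I$ amounts to summing over $\llbracket m\rrbracket$. Neither holds, because closures in the product are formed from all coordinates at once.

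For the first claim, take $\sigma=\{\mathrm{F}\}$ with $\mathrm{F}$ unary, and let $\mathcal{A}$ be the disjoint union of a $2$-cycle and a $3$-cycle. Every element has $|S^{a\downarrow}_\mathrm{F}|=|S^{a\uparrow}_\mathrm{F}|=1$, so all hypotheses hold with $\tau_\mathcal{A}=6$ attained on the $3$-cycle, and the middle expression of (a) is $3\cdot(1+1)=6$. But for $|I|\geq 2$, a function with a $2$-cycle value at one coordinate and $3$-cycle values elsewhere has an $\mathrm{F}$-orbit of size $6$ in $\mathcal{A}^I$. That orbit is its connected closure, so $\tau_{\mathcal{A}^I}=12$.

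For the second claim, take $(\mathbb{Z}/2,+)$. Every $g\in A^I$ satisfies $g+g=\overline{0}$, so $\llbracket\overline{0}\rrbracket=A^I$. For finite $|I|\geq 2$ the closure sum therefore has $2^{|I|}$ identical terms, not $|\llbracket 0\rrbracket|=2$.

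So the equality in (a) is not provable as stated. The paper's step ``by assumption, $\tau_{\mathcal{A}^I}=\tau_{\llbracket\overline{m}\rrbracket}$'' is the same unsupported move. What survives is the product formula and the per-element bound $\sum_\mathrm{F}(|S^{n\downarrow}_\mathrm{F}|^{|I|}+|S^{n\uparrow}_\mathrm{F}|^{|I|})\leq\tau_n^{|I|}$. For finite values this bound needs $\sum_i x_i^k\leq(\sum_i x_i)^k$, not absorption of $|\sigma|$ into $\tau_\mathcal{A}$.

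In (b), the surjection onto $S^{a\downarrow}_\mathrm{F}$ from a set of size $\prod_{j\in Z}|S^{f(j)\downarrow}_\mathrm{F}|$ does not exist in general. A representing tuple satisfies the equation only on some $X\in\mathcal{U}$. On $Z\setminus X$ it cannot be repaired if $S^{f(j)\downarrow}_\mathrm{F}=\varnothing$ for some $j\in Z$; then the product is $0$ although $S^{a\downarrow}_\mathrm{F}$ may be nonempty. Your union over $X\in\mathcal{U}$ with $X\subseteq Z$ only gives a bound like $2^{|Z|}\cdot\sup_X\prod_{j\in X}|S^{f(j)\downarrow}_\mathrm{F}|$.

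Avoiding this needs representatives that are good simultaneously for every $\mathrm{F}$ and every element of the closure. That means realizing the closure of $a$ in the ultrapower inside the closure of one function in $\mathcal{A}^I$, which you do not supply and which can fail outright. For the successor on a disjoint union of finite paths of unbounded length, a nonprincipal ultrapower over $\omega$ has infinite closures, while every closure in $\mathcal{A}^\omega$ is finite.

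Finally, the second inequality in (b) cannot follow ``as in (a)''. The maximum ranges over all $a\in A^I$, where uniformity is not assumed and closures need not have size at most $|\llbracket m\rrbracket|$. Reading $\llbracket a\rrbracket$ in $\mathcal{A}^I$, take $(\mathbb{Z}/2,+)$ with $|I|=3$ and $\mathcal{U}$ principal: the middle expression is $32$, while $|\llbracket m\rrbracket|\times\tau_\mathcal{A}^{|Z|}=16$.
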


\begin{proof}
(a): Let $a \in \mathcal{A}^I$. Then

\begin{center}
\begin{tabular}{lll}
$S^{a \downarrow}_\mathrm{F}$ & $=$ & $\{\langle b_1, ..., b_{\mathrm{ar}(\mathrm{F})} \rangle \in (\mathcal{A}^I)^{\mathrm{ar}(\mathrm{F})} \mid \mathrm{F}^{\mathcal{A}^I} (b_1, ..., b_{\mathrm{ar}(\mathrm{F})}) = a \}$\\
 & $=$ & \small $\big{\{} \langle b_1, ..., b_{\mathrm{ar}(\mathrm{F})} \rangle \in (\mathcal{A}^I)^{\mathrm{ar}(\mathrm{F})} \mid \forall j \in I \big{(} \mathrm{F}^\mathcal{A} \big{(} b_1(j), ..., b_{\mathrm{ar}(\mathrm{F})}(j) \big{)} = a(j) \big{)} \big{\}}$ \normalsize \\
 & $=$ & \small $\big{\{} \langle b_1, ..., b_{\mathrm{ar}(\mathrm{F})} \rangle \in (\mathcal{A}^I)^{\mathrm{ar}(\mathrm{F})} \mid \forall j \in I \big{(} \langle b_1(j), ..., b_{\mathrm{ar}(\mathrm{F})}(j) \rangle \in S^{a(j) \downarrow}_\mathrm{F} \big{)} \big{\}}$ \normalsize \\
\end{tabular}
\end{center}

\noindent
and likewise for $S^{a \uparrow}_\mathrm{F}$. Thus,

\begin{center}
$|S^{a \downarrow}_\mathrm{F}| = \Pi_{j \in I} |S^{a(j) \downarrow}_\mathrm{F}|$
\end{center}

\noindent
(and likewise for $|S^{a \uparrow}_\mathrm{F}|$). We may notice that, by assumption, we have

\begin{center}
\begin{tabular}{lll}
$\tau_{\mathcal{A}^I}$ & $=$ & $\tau_{\llbracket \overline{\mathrm{m}} \rrbracket}$\\
 & $=$ & $|\bigsqcup_{n \in \llbracket \overline{m} \rrbracket} \bigsqcup_{\mathrm{F} \in \sigma} \big{(} S^{n \downarrow}_\mathrm{F} \sqcup S^{n \uparrow}_\mathrm{F} \big{)}|$\\
 & $=$ & $\sum_{n \in \llbracket m \rrbracket} \sum_{\mathrm{F} \in \sigma} \big{(} \Pi_{j \in I} |S^{n \downarrow}_\mathrm{F}| + \Pi_{j \in I} |S^{n \uparrow}_\mathrm{F}| \big{)}$\\
 & $=$ & $\sum_{n \in \llbracket m \rrbracket} \sum_{\mathrm{F} \in \sigma} \big{(} |S^{n \downarrow}_\mathrm{F}|^{|I|} + |S^{n \uparrow}_\mathrm{F}|^{|I|} \big{)}\ \leq\ |\llbracket m \rrbracket| \times \big{(} \sum_{\mathrm{F} \in \sigma} |S^{n \downarrow}_\mathrm{F}| + |S^{n \uparrow}_\mathrm{F}| \big{)}^{|I|}\ =\ |\llbracket m \rrbracket| \times (\tau_\mathcal{A})^{|I|}$.
\end{tabular}
\end{center}

\vspace{2mm}

(b): Let $a \in \mathcal{A}^I / \mathcal{U}$ and $\cdot^\dagger : \mathcal{A}^I / \mathcal{U} \to A^I$ be a choice function on the equivalence classes. Then, as we have seen,

\begin{center}
$|S^{a^\dagger \downarrow}_\mathrm{F}|\ =\ \Pi_{j \in I} \big{|} S^{a^\dagger(j) \downarrow}_\mathrm{F} \big{|}$
\end{center}

\noindent
(and likewise for $|S^{a^\dagger \uparrow}_\mathrm{F}|$). Let $X \subseteq I$. Notice

\begin{center}
\small $\big{|} \big{\{} \langle b_1, ..., b_{\mathrm{ar}(\mathrm{F})} \rangle \in (A^I)^{\mathrm{ar}(\mathrm{F})} \mid \{j \in I \mid \langle b_1(j), ..., b_{\mathrm{ar}(\mathrm{F})}(j) \rangle \in S^{a^\dagger(j) \downarrow}_\mathrm{F} \} = X \big{\}} \big{|}$ \normalsize $=$\\ $(A^{\mathrm{ar}(\mathrm{F}) \times |I \setminus X|}) \times \prod_{j \in X} |S^{a^\dagger(j) \downarrow}_\mathrm{F}|$.
\end{center}

\noindent
Thus, we may see, if $X \in \mathcal{U}$,

\begin{center}
 $\big{|} \big{\{} \langle [b_1]_\mathcal{U}, ..., [b_{\mathrm{ar}(\mathrm{F})}]_\mathcal{U} \rangle \in (A^I / \mathcal{U})^{\mathrm{ar}(\mathrm{F})} \mid \{j \in I \mid \langle b_1(j), ..., b_{\mathrm{ar}(\mathrm{F})}(j) \rangle \in S^{a^\dagger(j) \downarrow}_\mathrm{F} \} = X \big{\}} \big{|}$ $\leq$\\ $\prod_{j \in X} |S^{a^\dagger(j) \downarrow}_\mathrm{F}|$,
\end{center}

\noindent
so that

\begin{center}
$|S^{a \downarrow}_\mathrm{F}| = |\{\langle [b_1], ... [b_n] \rangle \mid \{j \in I \mid \langle b_1^\dagger(j), ..., b_n^\dagger \rangle \in S^{a(j) \downarrow}_\mathrm{F}\} \in \mathcal{U}\}|\ \leq\ \prod_{j \in Z} |S^{a^\dagger(j) \downarrow}_\mathrm{F}|$,
\end{center}

\noindent
and likewise for $S^{a \uparrow}_\mathrm{F}$. Thus,

\begin{center}
$\tau_{\mathcal{A}^I / \mathcal{U}} \leq \mathrm{max} \big{\{} \sum_{b \in \llbracket a \rrbracket} \sum_{\mathrm{F} \in \sigma} \big{(} \prod_{j \in Z} |S^{b^\dagger(j) \downarrow}_\mathrm{F}| + \prod_{j \in Z} |S^{b^\dagger(j) \uparrow}_\mathrm{F}| \big{)} \big{\}}_{a \in A^I / \mathcal{U}}$,
\end{center}

\noindent
and by the arbitrariness of $\cdot^\dagger$, we may conclude

\begin{center}
$\tau_{\mathcal{A}^I / \mathcal{U}} \leq \mathrm{max} \big{\{} \sum_{b \in \llbracket a \rrbracket} \sum_{\mathrm{F} \in \sigma} \big{(} \prod_{j \in Z} |S^{b(j) \downarrow}_\mathrm{F}| + \prod_{j \in Z} |S^{b(j) \uparrow}_\mathrm{F}| \big{)} \big{\}}_{a \in A^I}$.
\end{center}

\noindent
Therefore, similarly to the proof of (a),

\begin{center}
\begin{tabular}{lll}
$\tau_{\mathcal{A}^I / \mathcal{U}}$ & $\leq$ & $\mathrm{max} \big{\{} \sum_{b \in \llbracket a \rrbracket} \sum_{\mathrm{F} \in \sigma} \big{(} \prod_{j \in Z} |S^{b(j) \downarrow}_\mathrm{F}| + \prod_{j \in Z} |S^{b(j) \uparrow}_\mathrm{F}| \big{)} \big{\}}_{a \in A^I}$\\
 & $\leq$ & $|\llbracket m \rrbracket| \times \big{(} \sum_{\mathrm{F} \in \sigma} |S^{\mathrm{m} \downarrow}_\mathrm{F}| + |S^{\mathrm{m} \uparrow}_\mathrm{F}| \big{)}^{|Z|}$\\
 & $=$ & $|\llbracket m \rrbracket| \times (\tau_\mathcal{A})^{|Z|}$.
\end{tabular}
\end{center}
\end{proof}

An interesting consequence of Theorem \ref{theorem representatives embedding} is the following result:

\begin{corollary}\label{corollary representatives embedding 3}
Let $\mathcal{A}$ be a $\sigma$-structure with finite many function symbols whose relations are empty. Let also $\tau_\mathcal{A}$ not be a limit cardinal. If for any $\mathrm{F} \in \sigma$ and $a \in A$, $|S^{a \downarrow}_\mathrm{F}|, |S^{a \uparrow}_\mathrm{F}| \leq 1$, for any $a \in A$ either $S^{a \downarrow}_\mathrm{F} = \varnothing$ or $S^{a \uparrow}_\mathrm{F} = \varnothing$, and for any $a \in A$, $\llbracket a \rrbracket < \aleph_0$ then there is a choice function $e$ on the equivalence classes of $A^I / \mathcal{U}$ such that $e : \mathcal{A}^I / \mathcal{U} \hookrightarrow \mathcal{A}^I$.
\end{corollary}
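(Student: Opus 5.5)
The plan is to derive the statement from Corollary \ref{corollary representatives embedding}, which needs $\mathcal{U}$ to be $\tau^+_{\mathcal{A}^I/\mathcal{U}}$-complete. The whole task is therefore to show that, under the hypotheses, $\tau_{\mathcal{A}^I/\mathcal{U}}$ is finite. Every ultrafilter is finitely complete, i.e.\ $\aleph_0$-complete, so it is in particular $n^+$-complete for every finite $n$. Once finiteness is established the Corollary applies, and it produces a choice function $e$ with $e:\mathcal{A}^I/\mathcal{U}\hookrightarrow\mathcal{A}^I$.

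First I would bound $\tau_\mathcal{A}$. For each $a\in A$ and each $\mathrm{F}\in\sigma$, the hypotheses give $|S^{a\downarrow}_\mathrm{F}|+|S^{a\uparrow}_\mathrm{F}|\le 1$, because each is at most $1$ and one of the two is empty. The relations are empty and $\sigma$ has finitely many function symbols, so $\tau_a\le|\sigma|<\aleph_0$. Since every $\llbracket a\rrbracket$ is finite, $\tau_{\llbracket a\rrbracket}\le|\llbracket a\rrbracket|\cdot|\sigma|$ is also finite.

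Next I would transfer these properties to the ultrapower $\mathcal{B}=\mathcal{A}^I/\mathcal{U}$. The properties ``$|S^{x\downarrow}_\mathrm{F}|\le1$'', ``$|S^{x\uparrow}_\mathrm{F}|\le1$'' and ``one of them is empty'' are each expressible, for a fixed $\mathrm{F}$, by a first-order sentence (a uniqueness or non-existence statement about tuples with $\mathrm{F}(\bar y)=x$ or $\mathrm{F}(\ldots x\ldots)=y$). By Łoś's theorem they therefore hold in $\mathcal{B}$, so $\tau_b\le|\sigma|$ for all $b\in B$.

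The finiteness of $\llbracket b\rrbracket$ in $\mathcal{B}$ is the main obstacle. Finiteness of connected components is not first-order in general, and we have no uniform bound on the sizes of the $\llbracket a\rrbracket$. To handle it, I would use Proposition \ref{proposition tau value}(b), whose hypotheses hold: the relations are empty and $\tau_\mathcal{A}$ is not a limit cardinal. Its bound $\tau_{\mathcal{B}}\le|\llbracket m\rrbracket|\times(\tau_\mathcal{A})^{|Z|}$ involves $\prod_{j\in Z}|S^{b(j)\downarrow}_\mathrm{F}|$, and each factor is at most $1$, so each product is at most $1$. The bound then reduces to $|\llbracket m\rrbracket|\cdot|\sigma|$, where $\tau_{\llbracket m\rrbracket}=\tau_\mathcal{A}$ is the maximal value; this maximum is attained because $\tau_\mathcal{A}$ is a successor or finite cardinal. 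This is finite, hence $\tau_\mathcal{B}<\aleph_0$.

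If this route through Proposition \ref{proposition tau value} turns out not to apply, the fallback is a direct argument. Each $\mathrm{cc}$-step from $b$ has at most $|\sigma|\cdot\max\mathrm{ar}$ neighbours, so each $\llbracket b\rrbracket$ is a countable, locally finite component. I would then show the sizes of the components in $\mathcal{A}$ are bounded, using $\tau_\mathcal{A}$ being attained, and transfer a uniform bound ``every component has at most $N$ elements'' to $\mathcal{B}$. That bound is a first-order sentence, since a path of length $N+1$ of $\mathrm{cc}$-steps is existentially definable. In either route, finiteness of $\tau_\mathcal{B}$ lets us conclude via Corollary \ref{corollary representatives embedding}.
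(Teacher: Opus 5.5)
Your main route is the paper's own proof. The paper quotes the middle bound of Proposition~\ref{proposition tau value}(b), notes that each product is at most $1^{|Z|}$, concludes $\tau_{\mathcal{A}^I/\mathcal{U}}<\aleph_0$, and applies Theorem~\ref{theorem representatives embedding} via the finite intersection property. Your appeal to Corollary~\ref{corollary representatives embedding} via finite completeness is equally fine and gives the choice-function clause directly. But this route has a real gap, and the paper shares it.

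First, Proposition~\ref{proposition tau value} has a third hypothesis you did not check: $|S^{n\uparrow}_\mathrm{F}|=|S^{n'\uparrow}_\mathrm{F}|$ and $|S^{n\downarrow}_\mathrm{F}|=|S^{n'\downarrow}_\mathrm{F}|$ for all $n,n'\in\llbracket \mathrm{m}\rrbracket$. It fails here whenever $\tau_\mathcal{A}>0$. If $\mathrm{F}(b_1,\dots,b_k)=c$ inside $\llbracket\mathrm{m}\rrbracket$, then $S^{c\downarrow}_\mathrm{F}\neq\varnothing$ forces $S^{c\uparrow}_\mathrm{F}=\varnothing$, whereas $S^{b_1\uparrow}_\mathrm{F}\neq\varnothing$.

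Second, you splice the two bounds of (b). The final bound, $|\llbracket\mathrm{m}\rrbracket|\times(\tau_\mathcal{A})^{|Z|}$, is at least $2^{\aleph_0}$ once $\tau_\mathcal{A}\ge 2$ and $\mathcal{U}$ is non-principal. The products are $\le 1$ only in the middle expression, whose outer sum ranges over $\llbracket a\rrbracket$ for $a$ in the power, not over $\llbracket\mathrm{m}\rrbracket$. So ``$\le|\llbracket\mathrm{m}\rrbracket|\cdot|\sigma|$'' presupposes exactly what you flagged as the main obstacle: a bound on connected closures in $\mathcal{A}^I/\mathcal{U}$. The paper's inequality $\sum_{b\in\llbracket a\rrbracket}\sum_\mathrm{F}1^{|Z|}<\aleph_0$ for $a\in A^I$ has the same lacuna, since finiteness of $\llbracket a\rrbracket$ is only assumed for $a\in A$.

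Your fallback is a correct proof, genuinely different from the paper's, and it should be your main argument. Make one link explicit: $|\llbracket a\rrbracket|\le\max(1,\tau_{\llbracket a\rrbracket})$. This holds because in a closure with at least two elements, every element lies in a tuple witnessing a nonempty $S^{b\downarrow}_\mathrm{F}$ or $S^{b\uparrow}_\mathrm{F}$, so $\tau_b\ge 1$. All $\tau_{\llbracket a\rrbracket}$ are finite, so $\tau_\mathcal{A}\le\aleph_0$. ``Not a limit cardinal'' (with $\aleph_0$ counted as a limit) then forces $\tau_\mathcal{A}<\aleph_0$, so $N=\max(1,\tau_\mathcal{A})$ bounds every $|\llbracket a\rrbracket|$.

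This is the real job of that hypothesis. Take disjoint chains with $\mathrm{F}(a_{2i})=a_{2i+1}$ and $\mathrm{G}(a_{2i+2})=a_{2i+1}$, one of every finite length. They satisfy all the other hypotheses, yet a non-principal ultrapower over $\omega$ has infinite closures, so $\tau_{\mathcal{A}^I/\mathcal{U}}\ge\aleph_0$.

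The statement ``no $N+1$ distinct elements lie within $\mathrm{cc}$-distance $N$ of a common element'' is equivalent to all closures having at most $N$ elements. It is first-order, since there are finitely many function symbols (read partial operations as their graphs). By \L o\'{s}'s theorem it transfers together with your local conditions, giving $\tau_{\mathcal{A}^I/\mathcal{U}}\le N\cdot|\sigma|<\aleph_0$, and Corollary~\ref{corollary representatives embedding} finishes.

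Compared with the paper's one-line citation, this costs a \L o\'{s} argument. In return it avoids Proposition~\ref{proposition tau value} entirely and uses the non-limit hypothesis for what it is actually there for.
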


\begin{proof}
By Proposition \ref{proposition tau value},

\begin{center}
$\tau_{\mathcal{A}^I / \mathcal{U}}\ \leq\ \mathrm{max} \big{\{} \sum_{b \in \llbracket a \rrbracket} \sum_{\mathrm{F} \in \sigma} \big{(} \prod_{j \in Z} |S^{b(j) \downarrow}_\mathrm{F}| + \prod_{j \in Z} |S^{b(j) \uparrow}_\mathrm{F}| \big{)} \big{\}}_{a \in A^I}\ \leq\ \sum_{b \in \llbracket a \rrbracket} \sum_{\mathrm{F} \in \sigma} 1^{|Z|}\ <\ \aleph_0$,
\end{center}

\noindent
so since every ultrafilter has the finite intersection property, by Theorem \ref{theorem representatives embedding} we have our result.
\end{proof}

For a $\sigma$-structure $\mathcal{A}$ and $a \in A$, let $\mathrm{Cl}_\mathcal{A}(a)$ be the \emph{operational closure set of $a$ in $\mathcal{A}$} -- that is, the set of all the elements which may be obtained from $a$ by means of some operation, the elements which may be obtained from those elements, and so on. For an element $a \in A$, let also a \emph{representation} of it be some application $\mathrm{F}(b_0, ..., b_n)$ for some function $\mathrm{F} \in \sigma$ and $b_0, ..., b_n \in A$. Say $a \in A$ has a \emph{unique representation} if whenever $\mathrm{F}(b_0, ..., b_n)$ and $\mathrm{G}(c_0, ..., c_m)$, then $\mathrm{F} = \mathrm{G}$ and $b_i = c_i$.

\begin{lemma}\label{lemma unique representations}
Let $\sigma$ be a finite signature and $\mathcal{A}$ be a $\sigma$-structure. If each element in $\mathcal{A}$ has a unique representation, then each element in $\mathcal{A}^I / \mathcal{U}$ has a unique representation.
\end{lemma}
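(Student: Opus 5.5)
The plan is to apply \L{}o\'s's theorem to a first-order sentence expressing unique representation, using the finiteness of $\sigma$.

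First, fix the reading of ``unique representation'': whenever $a = \mathrm{F}(b_0, \ldots, b_n)$ and $a = \mathrm{G}(c_0, \ldots, c_m)$, then $\mathrm{F} = \mathrm{G}$ and $b_i = c_i$ for all $i$. For each pair of function symbols $\mathrm{F}, \mathrm{G} \in \sigma$ I form the universal sentence
\[
\varphi_{\mathrm{F},\mathrm{G}} :\equiv \forall x\, \forall \bar y\, \forall \bar z\, \big( x = \mathrm{F}(\bar y) \wedge x = \mathrm{G}(\bar z) \rightarrow \psi_{\mathrm{F},\mathrm{G}}(\bar y, \bar z) \big).
\]
Here $\psi_{\mathrm{F},\mathrm{G}}$ is $\bigwedge_i y_i = z_i$ when $\mathrm{F} = \mathrm{G}$, and $\bot$ (say $\neg\, y_0 = y_0$) when $\mathrm{F} \neq \mathrm{G}$. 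Since $\sigma$ is finite, the conjunction $\Phi = \bigwedge_{\mathrm{F},\mathrm{G}} \varphi_{\mathrm{F},\mathrm{G}}$ is a single first-order sentence. By hypothesis $\mathcal{A} \models \Phi$.

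The second step is to transfer $\Phi$ to $\mathcal{A}^I/\mathcal{U}$. In the total case this follows directly from \L{}o\'s's theorem. The paper, however, allows partial operations, so I would argue by hand, which also makes the proof self-contained. Suppose $[f] = \mathrm{F}([g_0], \ldots, [g_n]) = \mathrm{G}([h_0], \ldots, [h_m])$ in the ultrapower. Then the set
\[
X = \{ j \in I \mid f(j) = \mathrm{F}^{\mathcal{A}}(g_0(j), \ldots, g_n(j)) = \mathrm{G}^{\mathcal{A}}(h_0(j), \ldots, h_m(j)) \}
\]
lies in $\mathcal{U}$, being the intersection of the two sets in $\mathcal{U}$ witnessing the equalities (and definedness). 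For every $j \in X$, unique representation in $\mathcal{A}$ gives $\mathrm{F} = \mathrm{G}$ and $g_i(j) = h_i(j)$ for all $i$. Since $X \neq \varnothing$, we get $\mathrm{F} = \mathrm{G}$. Moreover $X \subseteq \{ j \mid g_i(j) = h_i(j) \}$, so that set is in $\mathcal{U}$ and $[g_i] = [h_i]$.

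The main obstacle is the meaning of an application in the ultrapower when operations are partial. I would take $\mathrm{F}^{\mathcal{A}^I/\mathcal{U}}([g_0], \ldots, [g_n])$ to be defined with value $[f]$ iff the coordinatewise application is defined and equals $f(j)$ on a $\mathcal{U}$-large set. This is the convention implicit in the proof of Theorem \ref{theorem ultrapowers embedding 2}. With that reading, the argument above goes through.

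It is worth noting where finiteness of $\sigma$ enters. The $\mathcal{U}$-large set is fixed once $\mathrm{F}$ and $\mathrm{G}$ are given, so the direct argument only needs a pair of symbols and does not actually use finiteness. Finiteness is needed only to package the property as the single sentence $\Phi$ when appealing to \L{}o\'s's theorem.
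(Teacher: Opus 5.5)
Your proposal is correct and follows the paper's route. The paper likewise expresses unique representation as a universal schema (injectivity of each $\mathrm{F}$, disjoint ranges for distinct $\mathrm{F}, \mathrm{G}$) and transfers it by \L{}o\'{s}'s theorem; your coordinatewise check is just \L{}o\'{s} unwound for these sentences, and it has the merit of explicitly covering partial operations. One small correction to your closing remark: finiteness of $\sigma$ is not needed even when appealing to \L{}o\'{s}, since each sentence of the schema transfers separately (as in the paper's formulation), so no single conjunction $\Phi$ is required.
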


\begin{proof}
Having a unique representation is first-order definable by the schema, for every distinct $\mathrm{F} \in \sigma$, $\forall x_1 ... x_n \forall y_1 ... y_n \big{(} \mathrm{F}(x_1, ..., x_n) = \mathrm{F}(y_1, ..., y_n) \to \bigwedge_{i = 1}^n x_i = y_i \big{)}$, and for every distinct $\mathrm{F}, \mathrm{G} \in \sigma$ of arities respectively $n, m$, $\forall x_1 ... x_n \forall y_1 ... y_m \big{(} \mathrm{F}(x_1, ..., x_n) \neq \mathrm{G}(y_1, ..., y_m) \big{)}$. Thus, the conclusion follows by \L o\'{s}'s theorem.
\end{proof}

\begin{theorem}\label{theorem representatives embedding 3}
Let $\sigma$ have no relations and $\mathcal{A}$ be a $\sigma$-structure whose operations are total and such that each of its elements has a unique representation. If there is $\{a_\alpha\}_{\alpha < \kappa} \subseteq \mathcal{A}^I / \mathcal{U}$ such that $\{\mathrm{Cl}_\mathcal{B}(a_\alpha) \rangle_{\alpha < \kappa}$ partitions $\mathcal{A}^I / \mathcal{U}$ and $\{\mathrm{c}^{\mathcal{A}^I / \mathcal{U}} \mid \mathrm{c}\ \text{is a constant of}\ \sigma\} \subseteq \{a_\alpha\}_{\alpha < \kappa}$, then there is a choice function $e$ on the equivalence classes of $A^I / \mathcal{U}$ such that $e : \mathcal{A}^I / \mathcal{U} \hookrightarrow \mathcal{A}^I$.
\end{theorem}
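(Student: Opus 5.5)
Write $\mathcal{B} = \mathcal{A}^I / \mathcal{U}$. The idea is to treat each $\mathrm{Cl}_\mathcal{B}(a_\alpha)$ as a term algebra freely generated by $a_\alpha$. We then define $e$ by choosing a representative $\theta(a_\alpha) \in A^I$ for each generator and extending by recursion on terms:
\[
e\big(\mathrm{F}^{\mathcal{B}}(b_1, \ldots, b_n)\big) = \mathrm{F}^{\mathcal{A}^I}\big(e(b_1), \ldots, e(b_n)\big).
\]
For constants $\mathrm{c}$ we take $\theta(\mathrm{c}^{\mathcal{B}}) = \overline{\mathrm{c}^\mathcal{A}}$; this is legitimate because these constants are among the generators $a_\alpha$.

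\textbf{Well-definedness.} Since $\sigma$ has no relations and the operations are total, an embedding only has to be injective and commute with the operations and constants. Well-definedness rests on Lemma \ref{lemma unique representations}: every element of $\mathcal{B}$ has a unique representation. (If $\sigma$ is infinite, one notes that Łoś's theorem still applies schema by schema, since each instance mentions only one or two symbols.) Each $b \in \mathrm{Cl}_\mathcal{B}(a_\alpha)$ other than the generator is $\mathrm{F}^\mathcal{B}(b_1, \ldots, b_n)$ for a unique $\mathrm{F}$ and unique $b_i$. The $b_i$ lie in the same block, since the blocks partition $B$ and the closure construction connects them. So $e(b)$ is determined by the $e(b_i)$, and recursion on the stage at which $b$ enters the closure defines $e$ on every block.

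We must also exclude a generator $a_\alpha$ that has a representation $\mathrm{F}^\mathcal{B}(b_1, \ldots, b_n)$, which would assign $a_\alpha$ two values. Either the partition hypothesis forces generators to be unrepresented, or we let the representation take priority and drop $\theta(a_\alpha)$. Term-wellfoundedness must be checked too: a cycle $b = \mathrm{F}(\ldots b \ldots)$ reachable from $a_\alpha$ would make the recursion circular. I expect handling these cases, via the partition hypothesis, to be the main obstacle.

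\textbf{Choice function.} An induction on term depth gives $[e(b)]_\mathcal{U} = b$. The generators are representatives by choice of $\theta$, and
\[
\big[\mathrm{F}^{\mathcal{A}^I}(e(b_1), \ldots, e(b_n))\big]_\mathcal{U} = \mathrm{F}^\mathcal{B}\big([e(b_1)], \ldots, [e(b_n)]\big) = b .
\]
Hence $e$ is a choice function on the equivalence classes. Injectivity is then immediate, since $[e(b)] = b$ means distinct $b$ have distinct images.

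\textbf{Homomorphism.} Operations are preserved by the defining clause, and constants are preserved by the choice $\theta(\mathrm{c}^\mathcal{B}) = \overline{\mathrm{c}^\mathcal{A}}$. Together with injectivity, this gives $e : \mathcal{A}^I/\mathcal{U} \hookrightarrow \mathcal{A}^I$.
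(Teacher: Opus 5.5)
Your construction is the paper's: generators go to chosen representatives, constants to $\overline{\mathrm{c}^\mathcal{A}}$, and everything else is extended along unique representations. But the case you defer as ``the main obstacle'' is a genuine gap, and neither of your two remedies closes it.

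The hypotheses do not force generators to be unrepresented. Let $\sigma = \{\mathrm{s}\}$ and let $\mathcal{A}$ be the disjoint union of a $3$-cycle and a $5$-cycle under a successor map $\mathrm{s}$. Then $\mathrm{s}$ is total and bijective, so every element has a unique representation. Also $\mathcal{B} \cong \mathcal{A}$, and the two cycles are the blocks $\mathrm{Cl}_\mathcal{B}(a_0)$ and $\mathrm{Cl}_\mathcal{B}(a_1)$. Yet $a_0 = \mathrm{s}^\mathcal{B}(\mathrm{s}^\mathcal{B}(\mathrm{s}^\mathcal{B}(a_0)))$ has a representation.

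Letting the representation take priority is circular: this block has no unrepresented element, and the defining clause only yields $e(a_0) = \mathrm{s}^3(e(a_0))$. Keeping an arbitrary $\theta(a_0)$ fails too. If $\theta(a_0)$ lands in the $5$-cycle on a nonempty $\mathcal{U}$-null set, then $(\mathrm{s}^{\mathcal{A}^I})^3(\theta(a_0)) \neq \theta(a_0)$, so $e$ does not preserve $\mathrm{s}$ at $a_0$. In the paper's stagewise union, $a_0$ would receive two values. The paper's proof also starts from an arbitrary choice function and simply asserts that $e$ is a function by the partition hypothesis and Lemma~\ref{lemma unique representations}, so it does not address this either.

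The missing idea is to choose the representative of a represented generator so that the equations it satisfies in $\mathcal{B}$ hold pointwise. For unary signatures this works, as follows.

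\begin{enumerate}
\item A representation of $a_\alpha$ must come from its own block. If $a_\alpha = \mathrm{F}^\mathcal{B}(b)$ with $b \in \mathrm{Cl}_\mathcal{B}(a_\beta)$, then $a_\alpha \in \mathrm{Cl}_\mathcal{B}(a_\beta)$, so the blocks coincide. Hence $t^\mathcal{B}(a_\alpha) = a_\alpha$ for a shortest nonempty composite $t$ of operations.
\item Any cycle in a block passes through its generator, since backward paths are unique. So this generator is the only point needing care.
\item Take $g \in a_\alpha$ and let $X = \{i \in I \mid t^\mathcal{A}(g(i)) = g(i)\} \in \mathcal{U}$. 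Fix $i_0 \in X$, and put $f = g$ on $X$ and $f(i) = g(i_0)$ off $X$. Then $[f]_\mathcal{U} = a_\alpha$ and $t^{\mathcal{A}^I}(f) = f$ pointwise.
\item Peeling outermost symbols using unique representation in $\mathcal{B}$ shows that $u^\mathcal{B}(a_\alpha) = v^\mathcal{B}(a_\alpha)$ forces $u = v \circ t^k$ or $v = u \circ t^k$.
\item Hence $e(u^\mathcal{B}(a_\alpha)) := u^{\mathcal{A}^I}(f)$ is well defined, picks representatives, and is a homomorphism on the block. A constant generator needs no change, because the natural embedding reflects equations.
\end{enumerate}

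Beyond the unary case there is a further difficulty. If $\mathrm{Cl}_\mathcal{B}(a)$ is read as the subalgebra generated by $a$ (the reading your same-block step uses), symbols of arity $\geq 2$ allow generators of the form $\mathrm{F}^\mathcal{B}(a_\alpha, a_\beta)$ with $a_\alpha, a_\beta$ in other blocks. An example is $\mathcal{A}$ the free magma on two atoms with $\mathcal{U}$ principal. Then the representatives must be chosen coherently across blocks, which needs an argument your proposal does not give.
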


\begin{proof}
Call $\mathcal{B} = \mathcal{A}^I / \mathcal{U}$ and let $\cdot^\dagger : B \to A^I$ be a choice function on the equivalence classes. Let $\{\mathrm{Cl}_\mathcal{B}(a_\alpha)\}_{\alpha < \kappa}$ partition $\mathcal{B}$. We inductively define the following function. For each $n < \omega$, we employ the following construction:

\begin{itemize}
\item[•] ($0$th step) let:

\vspace{2mm}

\begin{tabular}{lll}
$B_0$ & $=$ & $\{a_\alpha\}_{\alpha < \kappa}$,\\
$e_0$ & $=$ & $\{\langle a_\alpha, a^\dagger_\alpha \rangle \mid a_\alpha \in \{a_\alpha\}_{\alpha < \kappa}\ \text{is not a constant}\}\ \cup$\\ 
 & & $\{\langle a_\alpha, \overline{a_\alpha} \rangle \mid a_\alpha \in \{a_\alpha\}_{\alpha < \kappa}\ \text{is a constant}\}$.
\end{tabular}

\vspace{2mm}

\item[•] ($n+1$th step) for $e_{< \beta} = \bigcup_{\alpha < \beta} e_\alpha$:

\vspace{2mm}

\begin{tabular}{lll}
$B_{n+1}$ & $=$ & $\{\mathrm{F}^\mathcal{B}(b_0, ..., b_m) \mid b_0, ..., b_m \in B_n\}$,\\
$e_{n+1}$ & $=$ & $\{\langle \mathrm{F}^\mathcal{B}(b_0, ..., b_m), \mathrm{F}^{\mathcal{A}^I} \big{(} e_{< \beta}(b_0), ..., e_{< \beta}(b_m) \big{)} \rangle \mid b_0, ..., b_m \in B_n\}$.\\
\end{tabular}
\end{itemize}

\noindent
Notice, at most, $B_\omega = \mathcal{B}$. Now, let $e = \bigcup_{n \in \omega} e_\alpha$. We may notice $e$ is a function, since $\{\mathrm{Cl}_\mathcal{B}(a_\alpha)\}_{\alpha < \kappa}$ partitions $\mathcal{B}$ and, by Lemma \ref{lemma unique representations}, each element has a unique representation. It is clear $e$ is also injective, and preserves functions. By induction on the stages of the construction, we may also see $e$ is a choice function on the equivalence classes of $\mathcal{B}$.
\end{proof}

Notice the previous results offer conditions for embedding either directly based on the properties of $\mathcal{A}$ and $\mathcal{A}^I / \mathcal{U}$, or on the cardinal invariant $\tau_{\mathcal{A}^I / \mathcal{U}}$, which in turn depends on the structure of $\mathcal{A}^I / \mathcal{U}$. In fact, it is a well-known result of universal algebra that a structure embeds into a product iff it has a separating family of homomorphisms into the factors. In the specific case at hand, that becomes the following criterion.

\begin{proposition}\label{proposition necessary sufficient embedding}
There is an embedding $e: \mathcal{A}^I / \mathcal{U} \hookrightarrow \mathcal{A}^I$ if and only if there is a family of homomorphisms $\{h_i : \mathcal{A}^I / \mathcal{U} \to \mathcal{A}\}_{i \in I}$ such that for any \(a \neq b \in \mathcal{A}^I / \mathcal{U}\), there is an $i \in I$ such that $h_i(a) \neq h_i(b)$.
\end{proposition}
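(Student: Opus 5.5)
The plan is to run the standard universal-algebra argument through the coordinate projections of the direct power. Throughout, I read ``homomorphism'' as a map preserving constants, the (possibly partial) operations, and the relations in the forward direction. ``Embedding'' I read as an injective map that preserves constants and operations and both preserves and reflects relations.

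\emph{($\Rightarrow$).} Given $e: \mathcal{A}^I / \mathcal{U} \hookrightarrow \mathcal{A}^I$, let $\pi_i : \mathcal{A}^I \to \mathcal{A}$, $f \mapsto f(i)$, be the $i$th projection. Set $h_i = \pi_i \circ e$.

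First I check that each $\pi_i$ is a homomorphism. Operations and constants in $\mathcal{A}^I$ are computed coordinatewise, so $\pi_i$ commutes with them. For a partial $\mathrm{F}$, the value $\mathrm{F}^{\mathcal{A}^I}(f_0, \dots, f_n)$ is defined only when it is defined at every coordinate, so definedness transfers to coordinate $i$. Relations hold in $\mathcal{A}^I$ only when they hold at every coordinate, so they are preserved by $\pi_i$. Hence $h_i$, being a composition of homomorphisms, is a homomorphism.

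Separation comes from injectivity of $e$. If $a \neq b$, then $e(a) \neq e(b)$ as functions on $I$, so there is an $i$ with $e(a)(i) \neq e(b)(i)$, that is, $h_i(a) \neq h_i(b)$.

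\emph{($\Leftarrow$).} Given a separating family $\{h_i\}_{i \in I}$, define $e(a) = \langle h_i(a) \rangle_{i \in I} \in A^I$.

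Injectivity is exactly the separation hypothesis. Preservation of constants and operations is checked coordinatewise, since $e(\mathrm{F}(a_0, \dots, a_n))(i) = h_i(\mathrm{F}(a_0, \dots, a_n)) = \mathrm{F}^{\mathcal{A}}(h_0\ldots)$. More precisely, for each coordinate $i$ this equals $\mathrm{F}^{\mathcal{A}}(h_i(a_0), \dots, h_i(a_n))$, which is coordinate $i$ of $\mathrm{F}^{\mathcal{A}^I}(e(a_0), \dots, e(a_n))$. Forward preservation of relations holds coordinatewise in the same way.

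The main obstacle is the \emph{reflection} of relations, together with the reflection of definedness of partial operations if embeddings are taken to require it. In $\mathcal{A}^I$ we have $\mathrm{R}\, e(a_0) \dots e(a_n)$ iff $\mathrm{R}^\mathcal{A} h_i(a_0) \dots h_i(a_n)$ for all $i$, and mere homomorphisms need not reflect this. I would handle it in one of two ways.

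\textbf{First option (preferred).} Read ``separating'' in the universal-algebra sense, so that the family also separates non-instances of relations. This is the standard formulation behind the ``well-known result'' the paper cites. Concretely, add to the hypothesis that whenever $\neg \mathrm{R}^{\mathcal{A}^I/\mathcal{U}} a_0 \dots a_n$, some $h_i$ gives $\neg \mathrm{R}^\mathcal{A} h_i(a_0) \dots h_i(a_n)$. The $(\Rightarrow)$ direction supplies this automatically, because $e$ reflects relations and relations in $\mathcal{A}^I$ are coordinatewise.

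\textbf{Second option.} Restrict the argument to a relation-free signature with total operations, where the two directions above already constitute the whole proof.

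I would state the proof with the first option, remarking that in the relation-free, total case it reduces to exactly the statement given.
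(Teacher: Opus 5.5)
Your argument is exactly the standard universal-algebra fact the paper cites in place of a proof: you compose an embedding with the coordinate projections for $(\Rightarrow)$ and tuple the $h_i$ into $a \mapsto \langle h_i(a) \rangle_{i \in I}$ for $(\Leftarrow)$, so it is correct and follows the same route. Your caveat about reflection is genuinely needed, because with relation symbols mere point-separation is insufficient: take $\sigma = \{\mathrm{S}, \mathrm{E}\}$, $A = \omega \sqcup W$ with $|W| = 2^{\aleph_0}$, $\mathrm{S}^\mathcal{A} = \{\langle n+1, n \rangle\}_{n \in \omega} \cup W^2$, $\mathrm{E}^\mathcal{A} = W^2$, $I = \omega$ and $\mathcal{U}$ non-principal; every homomorphism $\mathcal{A}^\omega/\mathcal{U} \to \mathcal{A}$ sends the nonstandard naturals (which start $\mathrm{S}$-paths of every finite length) into $W$, so no embedding into $\mathcal{A}^\omega$ can reflect $\neg \mathrm{E}(a,a)$ for such $a$, yet a single injective homomorphism (identity on the standard part, injective into $W$ elsewhere) already gives a separating family; note also that your preferred first option covers relations but not reflection of definedness of partial operations, which would need the analogous clause if your notion of embedding requires it.
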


Let $\mathbb{N}$ and $\mathbb{Z}$ be, respectively, the naturals and integers in the signature $\{0, \mathrm{s}, \mathrm{p}\}$, where $\mathrm{s}$ and $\mathrm{p}$ are respectively the successor and predecessor functions.

\begin{corollary}
The following hold:

\begin{itemize}
\item[\emph{(1)}] $\mathbb{N}^I / \mathcal{U} \not\hookrightarrow \mathbb{N}^I$;
\item[\emph{(2)}] $\mathbb{Z}^I / \mathcal{U} \hookrightarrow \mathbb{Z}^I$. 
\end{itemize}
\end{corollary}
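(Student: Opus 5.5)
For (1) I will argue directly, without using Proposition \ref{proposition necessary sufficient embedding}. I read $\mathrm{p}^\mathbb{N}$ as the partial predecessor, undefined at $0$. I must also assume that $\mathcal{U}$ is not countably complete. If $\mathcal{U}$ is principal, or more generally countably complete, then $\mathbb{N}^I/\mathcal{U} \cong \mathbb{N}$, and $\mathbb{N}$ embeds into $\mathbb{N}^I$ through the natural embedding. So the intended statement must concern countably incomplete (in particular non-principal, over $\omega$) ultrafilters, and I will say so explicitly.

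Under that assumption there is a partition $\{X_n\}_{n<\omega}$ of $I$ with no $X_n \in \mathcal{U}$. Let $g(j)=n$ for $j \in X_n$, and put $x=[g]_\mathcal{U}$. For every $n$ we have $\{j \mid g(j) \geq n\} \in \mathcal{U}$. By \L o\'s's theorem, $\mathrm{p}^n(x)$ is therefore defined in $\mathbb{N}^I/\mathcal{U}$ for every $n<\omega$.

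Now suppose $e$ is an embedding into $\mathbb{N}^I$. Preservation of the partial operation $\mathrm{p}$ forces $(\mathrm{p}^{\mathbb{N}^I})^n(e(x))$ to be defined for every $n$. Since operations in the direct power are computed coordinatewise, $\mathrm{p}^n(e(x)(j))$ must then be defined in $\mathbb{N}$ for every $j \in I$ and every $n$. This fails at $n = e(x)(j)+1$ for any fixed $j$, a contradiction. The same argument shows that there is not even a homomorphism $\mathbb{N}^I/\mathcal{U}\to\mathbb{N}$, which is consistent with Proposition \ref{proposition necessary sufficient embedding}.

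For (2), both $\mathrm{s}$ and $\mathrm{p}$ are total, mutually inverse bijections in $\mathbb{Z}$. By \L o\'s, the same holds in $\mathcal{B}=\mathbb{Z}^I/\mathcal{U}$. Hence $\mathcal{B}$ is partitioned into orbits $\{\mathrm{s}^n(b) \mid n \in \mathbb{Z}\}$, where $\mathrm{s}^{-n}$ means $\mathrm{p}^n$. Each orbit is a copy of $\mathbb{Z}$, because $\mathrm{s}^n(b)\neq b$ for $n\neq 0$, again by \L o\'s. This is exactly the situation of Theorem \ref{theorem representatives embedding 3}, but that theorem cannot be quoted directly: elements of $\mathbb{Z}$ do not have unique representations, since $x=\mathrm{s}(\mathrm{p}(x))=\mathrm{p}(\mathrm{s}(x))$. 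I will therefore imitate its construction by hand.

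Choose one representative $a_\alpha$ from each orbit, taking $a_0=0^\mathcal{B}$. Let $f_0=\overline{0}$, and for $\alpha>0$ let $f_\alpha$ be an arbitrary element of $a_\alpha$. Then define $e(\mathrm{s}^n(a_\alpha)) = f_\alpha + \overline{n}$ for $n\in\mathbb{Z}$. This is well defined because each orbit is a free $\mathbb{Z}$-orbit, so every element of $\mathcal{B}$ is uniquely of the form $\mathrm{s}^n(a_\alpha)$.

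The remaining checks are routine.
\begin{itemize}
\item Preservation of structure: $e(0^\mathcal{B})=\overline{0}$, and $e$ commutes with $\mathrm{s}$ and $\mathrm{p}$ because these act coordinatewise in $\mathbb{Z}^I$ as $+1$ and $-1$.
\item Choice function: $[f_\alpha+\overline{n}]_\mathcal{U}=\mathrm{s}^n(a_\alpha)$.
\item Injectivity: this follows immediately from the previous point, since $e$ picks an element of each equivalence class and distinct classes are disjoint.
\end{itemize}
So $e$ is an embedding, and in fact a choice function on the equivalence classes.

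The main obstacle I expect is in (1): stating the precise hypotheses. Namely, that $\mathrm{p}$ is partial and that embeddings preserve definedness of partial operations, and that the claim genuinely needs $\mathcal{U}$ to be countably incomplete. Part (2) is essentially bookkeeping once the orbit decomposition is in place.
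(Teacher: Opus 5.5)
Your proof is correct. Part (1) is the paper's argument: a nonstandard element has $\mathrm{p}^m$ defined for all $m$, and no homomorphism into $\mathbb{N}$ can respect that. The only difference is that you compose $e$ with a coordinate projection rather than appeal to Proposition \ref{proposition necessary sufficient embedding}. Your countable-incompleteness caveat is correct and needed. The paper tacitly assumes it when it picks $n\in\mathbb{N}^*\setminus\mathbb{N}$, and for countably complete (e.g.\ principal) $\mathcal{U}$ statement (1) is false.

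Part (2) takes a different route, and the difference lies in how injectivity is obtained. The paper builds a separating family $h_i(\mathrm{s}^m(a_Z))=v(Z)(i)+m$. Here $v$ sends galaxies to functions in $\mathbb{Z}^I$ that pairwise do not differ by a constant. Such a $v$ is found by a counting argument: there are $2^{|I|}$ classes modulo constants, while $|\mathbb{Z}^*|\le 2^{|I|}$, and finite $I$ is handled separately. The family $\{h_i\}_{i\in I}$ corresponds to the map $\mathrm{s}^m(a_Z)\mapsto v(Z)+\overline{m}$. This is exactly your $e$ once you take $v(Z)$ to be a representative $f_\alpha$ of the orbit's base point. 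With that choice the non-constant-difference condition is automatic: if $f_\alpha-f_\beta=\overline{k}$ then $a_\alpha=\mathrm{s}^k(a_\beta)$. Injectivity then comes for free, because $e$ is a choice function.

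So your route avoids the cardinal count and the case split on $|I|$. It also yields the stronger choice-function conclusion of Theorem \ref{theorem representatives embedding 3}, even though that theorem's unique-representation hypothesis fails for $\mathbb{Z}$, as you rightly observe. The paper's route instead shows the separating-family criterion at work. It also shows that any pairwise non-equivalent choice of base functions yields an embedding, not only representatives.
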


\begin{proof}
(1): Let $\mathbb{N}^* = \mathbb{N}^I / \mathcal{U}$. For simplicity, we use $\mathbb{N}$ to also refer to the image of the natural embedding. Let $n \in \mathbb{N}^* \setminus \mathbb{N}$. Suppose there is an homomorphism $h: \mathbb{N}^* \to \mathbb{N}$. We know $n \neq 0$, so that $\mathrm{p}(n)$ is defined, and that $\mathrm{p}(n) \in \mathbb{N}^* \setminus \mathbb{N}$. By induction, we may see for any $m \in \omega$, $\mathrm{p}^m(n) \in \mathbb{N}^* \setminus \mathbb{N}$ (where $\mathrm{p}^m(n)$ is the element obtained by applying $\mathrm{p}$ $m$ times to $n$). That means there is an infinite sequence $n, \mathrm{p}(n), ..., \mathrm{p}^m(n), ...$. Since $h$ is a homomorphism, for each $m \in \omega$, $h(\mathrm{p}^m(n)) = \mathrm{p}^m(h(n))$. Notice $\mathrm{p}^m(h(n)) = h(n) - m$. Since $h(n) \in \mathbb{N}$, that means $h(\mathrm{p}^{h(n)+1}(n)) = \mathrm{p}^{h(n)+1}(h(n)) = h(n) - (h(n) + 1)$, but $h(n) - (h(n) + 1)$ is undefined in $\mathbb{N}$. Thus, there is no such homomorphism, so the conclusion follows by Proposition \ref{proposition necessary sufficient embedding}.

(2): If $I$ is finite, the result is trivial, so suppose otherwise. Let $\mathbb{Z}^* = \mathbb{Z}^I / \mathcal{U}$. Let $Z$ be a galaxy of $\mathbb{Z}$.\footnote{We recall a galaxy is defined by a collection of elements whose difference is finite.} Let $a_Z \in Z$. Then each element in $Z$ may be represented by $\mathrm{s}^m(a_Z)$ for some $m \in \mathbb{Z}$, where a negative $m$ represents the application of the predecessor function. Let $h_0 : \mathbb{Z}^* \to \mathbb{Z}; \mathrm{s}^k(a_Z) \mapsto k$, where $Z$ is the galaxy of each input. Define an equivalence relation in $\mathbb{Z}^I$ such that $f \sim g$ iff $f -^{\mathbb{Z}^I} g$ is a constant function. Then each equivalence class has $|\mathbb{Z}| = \aleph_0$ elements, which means, since $I$ is infinite, there are $|\mathbb{Z}^I| = 2^{|I|}$ such equivalence classes. Let now $\mathbf{Z}$ be the collection of all the galaxies of $\mathbb{Z}^*$. We have $2^{|I|} \geq \mathbb{Z}^*$, which means $|\mathbf{Z}| \leq 2^{|I|}$. Therefore, there is an injective $v : \mathbf{Z} \to \mathbb{Z}^I$ such that $v(Z_0) = \overline{0}$, for $Z_0$ the galaxy of $0$, and such that for each $Z \neq Z'$, $v(Z) \not\sim v(Z')$. For each $i \in I$, define now $h_i : \mathbb{Z}^* \to \mathbb{Z}$ such that $h(\mathrm{s}^m(a_Z)) \mapsto v(Z)(i) + m$. We may easily notice each $h_i$ is an homomorphism. Let now $a, b \in \mathbb{Z}^*$. If $a, b \in Z$ for some $Z \in \mathbf{Z}$, then they have different representations, and thus clearly for any $i \in I$, $h_i(a) \neq h_i(b)$. If now $a \in Z$ and $b \in Z'$ for $Z \neq Z'$, let $a = \mathrm{s}^k(a_Z)$ and $b = \mathrm{s}^m(a_{Z'})$. Since $v(Z) \not\sim v(Z')$, $v(Z) -^{\mathbb{Z}^I} v(Z')$ is not constant, and thus there is $i \in I$ such that $v(Z)(i) - v(Z')(i) \neq k - m$. Since $h_i(a) = v(Z)(i) + k$ and $h_i(b) = v(Z') + m$, $h_i(a) - h_i(b) = v(Z)(i) + k - (v(Z')(i) + m) = (v(Z)(i) - v(Z')(i)) + (k - m) \neq 0$. Therefore, $h_i(a) \neq h_i(b)$. Since $I$ is infinite, let $\langle i_\kappa \rangle_{\kappa < |I|}$ be an enumeration of the $I$, and define the family $\{h'_i\}_{i \in I}$ such that $h'_{i_0} = h_0$, $h_{i_n}' = h_{i_{n-1}}$ for $0 < n < \omega$, and $h'_{i_\kappa} = h_{i_\kappa}$ otherwise. Then $\{h'_i\}_{i \in I}$ is a family of homomorphisms satisfying the conditions of Proposition \ref{proposition necessary sufficient embedding}.
\end{proof}

A question thus naturally rises: is embeddability of an ultrapower into its related direct power always dependent on each particular structure, or is there an ultrafilter property which offers a different necessary, or sufficient, criterion for embeddability? In fact, we pose the following problem:

\medskip

\begin{itemize}
\item[] \textbf{Open Problem 2:} Is there a sufficient condition on $\mathcal{U}$ for the embeddability $\mathcal{A}^I / \mathcal{U} \hookrightarrow \mathcal{A}^I$? In general, is there such a sufficient criterion for embeddability?
\end{itemize} 

\medskip

\noindent
We conjecture that no such criterion based solely on the properties of the ultrafilter exists.

We end the paper with a small remark on embedding between real closed fields. Let $\mathbb{Q}^*$ be some construction of the hyperrationals, and $\infty$ and $\varepsilon$ be its infinite and infinitesimal elements, respectively. It is well known that the quotient ring $(\mathbb{Q}^* \setminus \infty) / \varepsilon$ is isomorphic to $\mathbb{R}$. We show now how that more generally applies between ordered real closed fields and their subfields.

Recall, that, for a cardinal $\kappa$, an ultrafilter $\mathcal{U}$ over $I$ is $\kappa$-regular if there is $X \subseteq \mathcal{U}$ such that $|X| = \kappa$ and each $i \in I$ belongs to finitely many elements of $X$. Recall also that a structure $\mathcal{B}$ is called \emph{$\kappa$-universal} if every structure $\mathcal{A}$ of cadinality less than $\kappa$ which is elementarily equivalent to $\mathcal{B}$ elementarily embeds in $\mathcal{B}$.

\begin{proposition}[\cite{changkeisler}, p. 255]\label{proposition regular}
Suppose $|\sigma| < \kappa$ and $\mathcal{U}$ over $I$ is a $\kappa$-regular ultrafilter. Then for every structure $\mathcal{A}$, $\mathcal{A}^I / \mathcal{U}$ is $\kappa^+$-universal.
\end{proposition}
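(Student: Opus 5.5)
The plan is the standard regularity argument: I realize the elementary diagram of a small model coordinatewise, and let \L o\'{s}'s theorem assemble the pieces. Since $\kappa^+$-universality concerns structures of cardinality $<\kappa^+$, fix a $\sigma$-structure $\mathcal{C}$ with $|C| \leq \kappa$ and $\mathcal{C} \equiv \mathcal{A}^I / \mathcal{U}$. By \L o\'{s}'s theorem $\mathcal{A} \equiv \mathcal{A}^I / \mathcal{U}$, so $\mathcal{C} \equiv \mathcal{A}$. Expand $\sigma$ by new constants $\{\mathrm{c}_b\}_{b \in C}$, and let $\Sigma$ be the elementary diagram of $\mathcal{C}$, i.e.\ all sentences of $\sigma \cup \{\mathrm{c}_b\}_{b\in C}$ true in $(\mathcal{C}, b)_{b \in C}$. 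Because $|\sigma| < \kappa$ and $|C| \leq \kappa$, we have $|\Sigma| \leq \kappa$. I will index $\Sigma$ by $\kappa$, repeating sentences if necessary.

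By $\kappa$-regularity, choose $\{X_\varphi\}_{\varphi \in \Sigma} \subseteq \mathcal{U}$ such that each $i \in I$ lies in only finitely many $X_\varphi$. For each $i \in I$ let $\Sigma_i = \{\varphi \in \Sigma \mid i \in X_\varphi\}$, a finite set. The conjunction $\bigwedge \Sigma_i$ mentions finitely many new constants $\mathrm{c}_{b_1}, \dots, \mathrm{c}_{b_k}$. Replacing them by variables and existentially quantifying gives a $\sigma$-sentence true in $\mathcal{C}$, hence true in $\mathcal{A}$. So I can pick witnesses in $A$ for those constants, and I set $f_{b_l}(i)$ equal to the witness for $\mathrm{c}_{b_l}$. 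For constants not occurring in $\Sigma_i$, I let $f_b(i)$ be arbitrary. This defines $f_b \in A^I$ for every $b \in C$, with the property that for each $i$, $\mathcal{A}$ satisfies every $\varphi \in \Sigma_i$ when each $\mathrm{c}_b$ is interpreted as $f_b(i)$.

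Now fix $\varphi(\mathrm{c}_{b_1},\dots,\mathrm{c}_{b_k}) \in \Sigma$. The set $\{i \in I \mid \mathcal{A} \models \varphi[f_{b_1}(i),\dots,f_{b_k}(i)]\}$ contains $X_\varphi \in \mathcal{U}$, so by \L o\'{s}'s theorem $\mathcal{A}^I/\mathcal{U} \models \varphi[[f_{b_1}],\dots,[f_{b_k}]]$. Define $g : C \to A^I/\mathcal{U}$ by $b \mapsto [f_b]_\mathcal{U}$. Then $g$ preserves every formula in the elementary diagram, in particular the formulas $\mathrm{c}_b \neq \mathrm{c}_{b'}$ for $b \neq b'$. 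Hence $g$ is injective and an elementary embedding $\mathcal{C} \preceq \mathcal{A}^I / \mathcal{U}$.

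The main point needing care is the coordinatewise step. The finiteness of each $\Sigma_i$ provided by regularity is exactly what reduces the condition at coordinate $i$ to a single first-order sentence. That sentence transfers from $\mathcal{C}$ to $\mathcal{A}$ by elementary equivalence, which is where $\mathcal{C} \equiv \mathcal{A}$ is used. The hypothesis $|\sigma|<\kappa$ is used only to keep $|\Sigma| \leq \kappa$, so that a regularity family of size $\kappa$ suffices to index the diagram.
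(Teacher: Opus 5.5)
Your proof is correct and is essentially the standard Chang--Keisler argument that the paper cites without giving a proof of its own. You use regularity to split the elementary diagram of $\mathcal{C}$ into finite pieces $\Sigma_i$, realize each piece coordinatewise in $\mathcal{A}$ via $\mathcal{C}\equiv\mathcal{A}$, and assemble the witnesses with \L o\'{s}'s theorem; the only tacit assumption is that $\kappa$ is infinite, which is needed for $|\Sigma|\le\kappa$ and is the intended reading of the statement.
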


\begin{proposition}[\cite{changkeisler}, p. 250]\label{proposition regular 2}
Let $\mathcal{U}$ be a $\kappa$-regular ultrafilter over a set $I$. If $A$ is infinite, then $|\mathcal{A}^I / \mathcal{U}| = |A^I|$.
\end{proposition}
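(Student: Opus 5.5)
The statement to prove is Proposition~\ref{proposition regular 2}: for a $\kappa$-regular $\mathcal{U}$ over $I$ and infinite $A$, $|\mathcal{A}^I/\mathcal{U}| = |A^I|$. The upper bound is immediate, since $A^I/\mathcal{U}$ is a quotient of $A^I$ and so $|A^I/\mathcal{U}| \le |A^I|$. All the work is in the lower bound $|A^I/\mathcal{U}| \ge |A|^{|I|}$. Regularity is meant with $\kappa = |I|$, as in \cite{changkeisler}; I will assume this, since for small $\kappa$ the statement fails. For example, a countably incomplete ultrafilter over a large $I$ is $\omega$-regular but can have small ultrapowers.

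\textbf{Step 1: coding.} Fix a regularizing family $X = \{X_\alpha\}_{\alpha<|I|} \subseteq \mathcal{U}$ in which each $i \in I$ lies in only finitely many $X_\alpha$. For $i \in I$ put $s(i) = \{\alpha \mid i \in X_\alpha\}$, a finite subset of $|I|$. For each $g : |I| \to A$, define $f_g \in (A^{<\omega})^I$ by letting $f_g(i)$ be the finite partial function $g \restriction s(i)$, coded as a finite set of pairs.

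\textbf{Step 2: injectivity mod $\mathcal{U}$.} Let $g \neq g'$ and pick $\alpha$ with $g(\alpha) \neq g'(\alpha)$. For every $i \in X_\alpha$ we have $\alpha \in s(i)$, so $f_g(i) \neq f_{g'}(i)$. Since $X_\alpha \in \mathcal{U}$, this gives $[f_g] \neq [f_{g'}]$. Hence $|(A^{<\omega})^I/\mathcal{U}| \ge |A^{|I|}| = |A|^{|I|}$.

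\textbf{Step 3: transfer to $A$.} Because $A$ is infinite, $|A^{<\omega}| = |A|$ when each partial function with finite domain $\subseteq |I|$ is coded as a finite sequence of elements of $A$ together with a finite set of ordinals. The ordinal part causes trouble, since it ranges over $[|I|]^{<\omega}$, which may be larger than $A$. I avoid this by enumerating $s(i)$ in increasing order: $f_g(i)$ becomes the finite sequence $\langle g(\alpha) \mid \alpha \in s(i)\rangle \in A^{<\omega}$. The argument of Step 2 still goes through. Suppose $f_g(i) = f_{g'}(i)$ on a set in $\mathcal{U}$. Intersecting with $X_\alpha$, at each such $i$ the position of $\alpha$ in the increasing enumeration of $s(i)$ is the same for $g$ and $g'$, because $s(i)$ depends only on $i$. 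So $g(\alpha) = g'(\alpha)$, a contradiction.

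Finally, fix a bijection $A^{<\omega} \to A$ and compose it pointwise with each $f_g$. This gives injectively many classes in $A^I/\mathcal{U}$, so $|A^I/\mathcal{U}| \ge |A|^{|I|}$, and combined with the upper bound the proof is complete.

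The main obstacle is Step 3, namely making sure the coding of the finite pieces stays within $A$. The positional-enumeration trick resolves it, because the index set $s(i)$ is common to all $g$.
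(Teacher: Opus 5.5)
Your proof is correct and is essentially the standard argument behind the Chang--Keisler result that the paper cites in place of giving its own proof: restrict each $g\colon |I|\to A$ to the finite index sets $s(i)$ coming from a regularizing family, note that $g\neq g'$ forces disagreement on all of some $X_\alpha\in\mathcal{U}$, and code the finite pieces back into $A$ (your increasing-enumeration coding into $A^{<\omega}$ handles this cleanly). You are also right to read the hypothesis as $\kappa=|I|$, since the statement fails for an ultrafilter over a large $I$ that is only $\omega$-regular (countably incomplete), as your example shows.
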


The theory of (ordered) real closed fields is complete. Thus, if $\mathcal{B}$ is a real closed field, $\mathcal{A} \subseteq \mathcal{B}$ is a real closed subfield, $\mathcal{U}$ is a $\kappa$-regular ultrafilter over $I$, and $|\mathcal{B}| \leq \kappa$, then $\mathcal{B} \preceq \mathcal{A}^I / \mathcal{U}$. That is:

\begin{remark}\label{theorem reals}
Let $\mathcal{B}$ be a (ordered) real-closed field, $\mathcal{A} \preceq \mathcal{B}$ be a (ordered) real-closed subfield with $|B| \leq |I|$. If $\mathcal{U}$ is a non-principal $|I|$-regular ultrafilter over $I$, then $\mathcal{B} \preceq \mathcal{A}^I / \mathcal{U}$.
\end{remark}

\begin{proof}
By Propositions \ref{proposition regular} and \ref{proposition regular 2}.
\end{proof}

Write $\overline{\mathbb{Q}}$ for the algebraic numbers.

\begin{corollary}
If $\mathcal{A} \hookrightarrow \mathbb{R}$ is an ordered real closed subfield, $|I| \geq 2^{\aleph_0}$ and $\mathcal{U}$ is $|I|$-regular, then $\mathbb{R} \preceq \mathcal{A}^A / \mathcal{U}$. Particularly, that means if $\mathcal{U}$ is $2^{\aleph_0}$-regular, $\mathbb{R} \preceq \overline{\mathbb{Q}}^{2^{\aleph_0}} /\mathcal{U}$ 
\end{corollary}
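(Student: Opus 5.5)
The plan is to reduce the corollary to Remark \ref{theorem reals} with $\mathcal{B} = \mathbb{R}$, reading the index set in ``$\mathcal{A}^A/\mathcal{U}$'' as $I$ (so the target is $\mathcal{A}^I/\mathcal{U}$). There are three hypotheses of the Remark to check.

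\begin{enumerate}
\item $\mathcal{A} \preceq \mathbb{R}$. Since $\mathcal{A}$ is a real closed ordered subfield of $\mathbb{R}$, this follows from the model completeness of the theory of real closed ordered fields (Tarski): an inclusion between models of a model complete theory is elementary. If one wants to avoid model completeness, completeness of the theory already gives $\mathcal{A} \equiv \mathbb{R}$. That is all the universality argument below actually uses.
\item $|\mathbb{R}| \leq |I|$. This is exactly the hypothesis $|I| \geq 2^{\aleph_0}$.
\item $\mathcal{U}$ is non-principal. A principal ultrafilter on an infinite $I$ is not $\aleph_0$-regular: any family witnessing regularity contains infinitely many sets containing the generating point. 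So $|I|$-regularity with $|I|$ infinite forces $\mathcal{U}$ to be non-principal.
\end{enumerate}

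With these in hand, Remark \ref{theorem reals} yields $\mathbb{R} \preceq \mathcal{A}^I/\mathcal{U}$, understood up to isomorphism, i.e.\ as an elementary embedding.

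To be safe, I would also unfold the Remark directly rather than rely on its wording. The signature of ordered fields is finite, so $|\sigma| < |I|$. Proposition \ref{proposition regular} with $\kappa = |I|$ then says that $\mathcal{A}^I/\mathcal{U}$ is $|I|^+$-universal. Now:
\begin{itemize}
\item $\mathbb{R} \equiv \mathcal{A}$ by completeness of the theory of real closed ordered fields.
\item $\mathcal{A} \equiv \mathcal{A}^I/\mathcal{U}$ by \L o\'s's theorem.
\item $|\mathbb{R}| = 2^{\aleph_0} \leq |I| < |I|^+$.
\end{itemize}
Hence $\mathbb{R}$ elementarily embeds into $\mathcal{A}^I/\mathcal{U}$. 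Proposition \ref{proposition regular 2} is not needed for this direction; it only confirms that the ultrapower is large enough, with $|\mathcal{A}^I/\mathcal{U}| = |A^I|$.

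For the particular case, take $I = 2^{\aleph_0}$ and $\mathcal{A} = \overline{\mathbb{Q}}$. Here $\overline{\mathbb{Q}}$ must be read as the real algebraic numbers $\overline{\mathbb{Q}} \cap \mathbb{R}$. That field is real closed and ordered, hence a real closed subfield of $\mathbb{R}$. The full algebraic closure is not orderable, so it cannot be what is meant.

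The main point needing care is the first hypothesis, that a real closed subfield is elementary in $\mathbb{R}$. This is precisely where Tarski's theorem (model completeness, or just completeness for the universality route) is used. The remaining steps are bookkeeping of cardinalities, plus the observation that $|I|$-regularity already gives non-principality.
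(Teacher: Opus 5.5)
Your proposal is correct and is essentially the paper's argument: the corollary is the Remark with $\mathcal{B} = \mathbb{R}$, which rests on completeness of the theory of real closed ordered fields together with the $|I|^+$-universality of $\mathcal{A}^I/\mathcal{U}$ for $|I|$-regular $\mathcal{U}$ (Proposition \ref{proposition regular}). Your extra checks are all sound: $|I|$-regularity already rules out principal ultrafilters, Proposition \ref{proposition regular 2} is not logically needed, the exponent must be $I$, and $\overline{\mathbb{Q}}$ must be read as the real algebraic numbers.
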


That is, the hyperalgebraic field $\overline{\mathbb{Q}}^{2^{\aleph_0}} / \mathcal{U}$, for a $2^{\aleph_0}$-regular $\mathcal{U}$, is a real closed non-Archimedean field extension of the reals.

\bibliography{hypersurreals}
\bibliographystyle{plain}

\end{document}